\documentclass[a4paper,12pt,reqno]{amsart}
\usepackage[utf8]{inputenc}
\usepackage[T1]{fontenc}
\usepackage{lmodern}
\usepackage[english]{babel}
\usepackage{microtype}
\usepackage{comment}
\usepackage{amsmath,amssymb,amsfonts,amsthm,esint}
\usepackage{mathtools,accents}
\usepackage{mathrsfs}
\usepackage{aliascnt}
\usepackage{braket}
\usepackage{bm}

\usepackage[a4paper,margin=3cm]{geometry}
\usepackage[citecolor=blue,colorlinks]{hyperref}

\usepackage{enumerate}
\usepackage{xcolor}
\usepackage{graphicx}
\usepackage{mathbbol}

\makeatletter
\g@addto@macro\@floatboxreset\centering
\makeatother

\makeatletter
\def\newaliasedtheorem#1[#2]#3{
  \newaliascnt{#1@alt}{#2}
  \newtheorem{#1}[#1@alt]{#3}
  \expandafter\newcommand\csname #1@altname\endcsname{#3}
}
\makeatother

\numberwithin{equation}{section}

\newtheoremstyle{slanted}{\topsep}{\topsep}{\slshape}{}{\bfseries}{.}{.5em}{}

\theoremstyle{plain}
\newtheorem{theorem}{Theorem}[section]
\newaliasedtheorem{proposition}[theorem]{Proposition}
\newaliasedtheorem{question}[theorem]{Question}
\newaliasedtheorem{lemma}[theorem]{Lemma}
\newaliasedtheorem{corollary}[theorem]{Corollary}
\newaliasedtheorem{counterexample}[theorem]{Counterexample}

\theoremstyle{definition}
\newaliasedtheorem{definition}[theorem]{Definition}
\newaliasedtheorem{example}[theorem]{Example}
\newaliasedtheorem{conjecture}[theorem]{Conjecture}

\theoremstyle{remark}
\newaliasedtheorem{remark}[theorem]{Remark}
\newcommand{\setN}{\mathbb{N}}

\newcommand{\setR}{\mathbb{R}}

\newcommand{\eps}{\varepsilon}

\let\phi\varphi

\newcommand{\abs}[1]{\left\lvert#1\right\rvert}
\newcommand{\norm}[1]{\left\lVert#1\right\rVert}

\DeclareMathOperator{\Hess}{Hess}

\newcommand{\di}{\mathop{}\!\mathrm{d}}

\newcommand{\bs}{{\rm bs}}
\newcommand{\loc}{{\rm loc}}

\DeclareMathOperator{\Lip}{Lip}
\DeclareMathOperator{\Lipb}{Lip_b}
\DeclareMathOperator{\Lipbs}{Lip_\bs}
\DeclareMathOperator{\Liploc}{Lip_\loc}

\newcommand{\haus}{\mathscr{H}}
\newcommand{\Leb}{\mathscr{L}}

\newcommand{\dist}{\mathsf{d}}

\newcommand{\meas}{\mathfrak{m}}

\DeclareMathOperator{\CD}{CD}
\DeclareMathOperator{\RCD}{RCD}

\newfont{\tmpf}{cmsy10 scaled 2500}

\DeclareMathOperator{\Tan}{Tan}

\newcommand{\rstr}{\:\mbox{\rule{0.1ex}{1.2ex}\rule{1.1ex}{0.1ex}}\:}
\def\set{{\rm set}}
\newcommand{\mass}{{\mathbb M}}
\newcommand{\spt}{{\rm spt}}
\newcommand{\sgn}{{\rm sgn}}

\newcommand{\curr}[1]{[\![ #1 ]\!]}
\newcommand{\bM}{\operatorname{\mathbf M}}
\newcommand{\bN}{\operatorname{\mathbf N}}
\newcommand{\bI}{\operatorname{\mathbf I}}

\newcommand{\LWc}[2]{{\mathbf M}_{\text{\rm loc},\,#2}(#1)}  
\newcommand{\LWnc}[2]{{\mathbf N}_{\text{\rm loc},\,#2}(#1)} 
\newcommand{\LWirc}[2]{{\mathcal I}_{\text{\rm loc},\,#2}(#1)}
\newcommand{\LWic}[2]{{\mathbf I}_{\text{\rm loc},\,#2}(#1)}

\begin{document}

\author[A. Mondino]{Andrea Mondino}
\thanks{A. Mondino was funded by the European Research  Council
(ERC), under the European Union Horizon 2020 research and innovation programme, via the ERC Starting Grant “CURVATURE”, grant agreement No. 802689.}
\address[A. Mondino]{Mathematical Institute, University of Oxford, United Kingdom}
\email{andrea.mondino@maths.ox.ac.uk}

\author[R. Perales]{Raquel Perales}
\thanks{R. Perales was funded by 
the Austrian Science Fund (FWF) [Grant DOI: 10.55776/EFP6]}
\address[R. Perales]{Centro de Investigaci\'on en Matem\'aticas, 
De Jalisco s/n, Valenciana, Guanajuato, Gto. Mexico. 36023}
\email{raquel.perales@cimat.mx}

\title[GH and IF convergence of RCD spaces]{Gromov-Hausdorff and intrinsic flat convergence of $\RCD(K,N)$ and Kato spaces}

\begin{abstract}
We consider metric measure spaces $(X,\mathsf{d},\mathscr{H}^N)$ satisfying the properties (ETR), (LBD), and with an almost everywhere connected regular set. In particular, these assumptions are fulfilled by non-collapsed RCD$(K,N)$ spaces without boundary, as well as by non-collapsed strong Kato limit spaces without boundary. For both classes, we study orientability in the sense of metric currents, establish stability of orientation under pointed Gromov--Hausdorff convergence, and show that the pointed Gromov--Hausdorff limit coincides with the local flat limit.

%  We primarily consider metric measure spaces $(X,\dist,\haus^N)$ belonging to one of the following two classes:
%\begin{enumerate}
%\item non-collapsed $\RCD(K,N)$ spaces without boundary;
%\item non-collapsed strong Kato limit spaces without boundary.
%\end{enumerate}
%For both classes, we study orientability in the sense of metric currents, establish  stability of orientation under pointed Gromov–Hausdorff convergence, and deduce that the pointed Gromov–Hausdorff and local flat limits coincide.
\end{abstract}

\maketitle
{
\hypersetup{linkcolor=blue}
\setcounter{tocdepth}{2}
\tableofcontents
}

\section{Introduction}

Orientability of Ricci limit spaces was first investigated by Honda \cite{Honda17}, who studied it via the existence of a top-dimensional differential form with Sobolev regularity. More recently, Brena–Bru\'e–Pigati \cite{BrenaBruePigati2024} carried out a comprehensive study of orientability for non-collapsed 
$\RCD(K,N)$ spaces without boundary \cite{DPG}, providing several equivalent characterizations. These include: (i) Honda’s formulation in terms of Sobolev top-degree differential forms; (ii) topological orientability of the topological manifold part, whose complement is known to have Hausdorff codimension 2, \cite{CC:97, KapMonGT}; and (iii) the existence of a non-zero metric current with bounded density and zero boundary. They also proved stability of orientability within the non-collapsed $\RCD$ class, as well as stability of non-orientability for Ricci limit spaces. Their arguments primarily rely on the second characterization, namely topological orientability of the manifold part. As a notable application, they showed that any open 4-manifold with Euclidean volume growth and non-negative Ricci curvature must be orientable. Let us also mention \cite{JPRSS} for a study of orientability in the setting of Alexandrov spaces.
\smallskip

In this paper, we consider metric measure spaces $(X,\dist,\haus^N)$ belonging to one of the following two classes:
\begin{enumerate}
\item non-collapsed $\RCD(K,N)$ spaces without boundary;
\item non-collapsed strong Kato limit spaces without boundary.
\end{enumerate}
For both classes, we study orientability in the sense of metric currents, establish stability of orientability under pointed Gromov–Hausdorff convergence, and deduce that the pointed Gromov–Hausdorff and local flat limits coincide. The last assertion has been established by Sormani-Wenger \cite{SW1} in the framework of compact non-collapsed limits of smooth manifolds with no boundary and nonnegative Ricci curvature, 
and then extended to any uniform lower bound on the Ricci curvature by
Matveev-Portegies \cite{MatveevPortegies17}.
\smallskip

In comparison with the earlier works \cite{SW1,MatveevPortegies17, Honda17, BrenaBruePigati2024}, our results extend the analysis from pointwise lower Ricci curvature bounds to lower Ricci bounds in the Kato sense (see below for precise definitions). Moreover, we provide more direct proofs than \cite{Honda17, BrenaBruePigati2024}
tailored to the metric current approach, avoiding the need to pass through alternative equivalent characterizations of orientability. 
\smallskip

 Metric currents were first introduced in the pioneering work of Ambrosio--Kirchheim \cite{AmbrosioKirchheim00} and further studied by Lang \cite{Lang} and Lang--Wenger \cite{LangWenger}. 
These are generalizations of currents in Euclidean space and smooth manifolds (see, e.g., \cite{Federer}). 

The starting point for this paper is the observation that, given an 
$N$-dimensional orientable Riemannian manifold 
$(M,g)$ endowed with a fixed orientation, one can canonically associate an $N$-dimensional integral current $T$ on $M$ whose mass measure  $\|T\|$ coincides with the Riemannian volume measure ${\rm dvol}_g=\haus^N$. Moreover, thanks to Stokes' theorem, $\partial M=0$ if and only if $\partial T= 0$.   The following definition is motivated by this observation; we refer the reader to Section \ref{SS:AKMetCur} for the notation and terminology.

\begin{definition}\label{def:Orientable} 
Let $(X,\dist,\haus^N)$ be a complete and separable metric measure space. We say that $(X,\dist,\haus^N)$  \textit{is orientable and with no boundary in the sense of currents} if there is a locally integral current $T\in \LWic{X}{N}$ such that $X=\set (T)$,  $\norm{T}=\haus^N$ and $\partial T=0$. 
\end{definition}

The next proposition establishes a useful rigidity result for locally integral currents on non-collapsed $\RCD$ spaces with no boundary.

\begin{proposition}\label{prop:locrep}
Let $(X,\dist,\haus^N)$  be an $\RCD(K,N)$ space without boundary for some $K \in \setR$ and $N \in \mathbb N$, that is orientable in the sense of currents by $T \in 
\LWic{X}{N}, \ \partial T=0$. Then, for any $S \in \LWic{X}{N}$, $x \in X$ and $s > 0$ such that  $\|\partial S\|(B_s(x)) = 0$, there exists $k\in \mathbb{Z}$ such that  $S \rstr B_s(x)= k T\rstr B_s(x)$.
\end{proposition}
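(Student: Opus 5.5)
The natural route is to express $S$ in $B_s(x)$ as $\theta\,T$ for an integer-valued density $\theta$, and then show that $\theta$ is locally constant on the regular part using $\partial S = 0$ there. Connectedness of the regular set then forces $\theta$ to be constant.

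\emph{Step 1: representation by a density.} Since $\norm{T}=\haus^N$ and $\set(T)=X$, the current $T$ is concentrated on a countably $N$-rectifiable set with multiplicity one and an orientation. We may cover $X$, up to $\haus^N$-null sets, by bi-Lipschitz charts $\phi_i\colon K_i\subset\setR^N\to X$, and on each chart
\[
T\rstr\phi_i(K_i)=\pm(\phi_i)_\#\curr{\mathbb 1_{K_i}}.
\]
After relabelling we may take the sign to be $+$. The current $S$ is also integer-rectifiable and $N$-dimensional. Hence $\norm{S}\ll\haus^N$, and on the same charts
\[
S\rstr\phi_i(K_i)=(\phi_i)_\#\curr{\theta_i}
\]
with $\theta_i\in L^1_{\loc}(\setR^N;\mathbb Z)$. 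This follows from the Ambrosio--Kirchheim representation theorem together with the uniqueness of approximate tangent planes of top dimension, since any $N$-dimensional orientation must agree with that of $T$ up to sign. We therefore obtain a Borel function $\theta\colon X\to\mathbb Z$ with $S=\theta\,T$, meaning $S(f,\pi)=T(\theta f,\pi)$. The goal is to show that $\theta$ is $\haus^N$-a.e.\ constant on $B_s(x)$.

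\emph{Step 2: $\theta$ is locally constant near regular points.} Fix a point $y\in B_s(x)$ in the regular set. Non-collapsed $\RCD$ theory (Cheeger--Colding/De Philippis--Gigli Reifenberg) gives a neighbourhood $U$ of $y$ and a bi-H\"older, or even $(1+\eps)$-bi-Lipschitz, homeomorphism $\Phi\colon U\to B_r(0)\subset\setR^N$. If only the bi-H\"older version is available, we instead use harmonic $\delta$-splitting maps, which are Lipschitz with an almost-everywhere bi-Lipschitz structure.

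Push forward to get $\Phi_\# T = \pm\curr{B_r}$ locally. This holds because $\Phi_\#T$ has zero boundary in $B_r$, so by the constancy theorem in $\setR^N$ it equals $c\curr{B_r}$, and $\norm{T}=\haus^N$ forces $c=\pm1$. Likewise $\Phi_\# S\rstr U$ is a boundaryless integral current in $B_r$, hence equals $k_y\curr{B_r}$. The pushforward is injective on densities, since $\Phi$ is a homeomorphism and is bi-Lipschitz a.e. So $\theta = \pm k_y$ is constant $\haus^N$-a.e.\ on $U$.

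If $\Phi$ is merely Lipschitz and not injective, I would instead use a flat-chain/slicing argument. Slice $S-kT$ by the Lipschitz map $\Phi$, invoke the coarea formula, and use $\partial(S-kT)=0$ in $U$ to show that the slices vanish.

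\emph{Step 3: globalisation.} The function $\theta$ is locally constant on the regular part $\mathcal R\cap B_s(x)$. The singular set has codimension at least $2$, so $\haus^{N-1}(\mathcal S)=0$, and in the absence of boundary $\mathcal R\cap B_s(x)$ is connected, at least after removing $\haus^N$-null sets or working on slightly smaller balls. This is the "a.e.\ connected regular set" property alluded to in the abstract. Hence $\theta\equiv k$ a.e., and $S\rstr B_s(x)=kT\rstr B_s(x)$.

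The main obstacle is this connectedness step. Balls need not have connected regular part, since a ball may be disconnected by geodesic geometry near $\partial B_s$. I would handle it by proving that $\theta$ is a.e.\ constant on each connected component of the open set $\mathcal R_\eps\cap B_s(x)$, where $\mathcal R_\eps$ is the open set of $\eps$-regular points. I would then argue that the components communicate across $\mathcal S$, because codimension-$2$ sets do not disconnect open sets. The precise form of this claim — that $B_s(x)\setminus\mathcal S$ is connected whenever $B_s(x)$ is connected, which holds since geodesic balls are path connected — would be invoked from \cite{CC:97,KapMonGT,BrenaBruePigati2024}.
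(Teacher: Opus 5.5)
Your outline follows the paper's route. You compare $S$ with $T$ on common charts. You get local constancy at regular points by pushing forward through a Lipschitz homeomorphic $\delta$-splitting chart and applying the Euclidean constancy theorem (Corollary \ref{cor-constant}). You obtain $|\theta_T|=1$ from the mass comparison (third item of Lemma \ref{lem:loc-constant}), and you globalise using the codimension of the singular set.

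However, the step carrying the real content in Step 2 is asserted, not proved: ``the pushforward is injective on densities since $u$ is bi-Lipschitz a.e.'' Theorem \ref{thm:biholder}(ii) only makes $u$ bi-Lipschitz on a closed set missing up to $\eps$ of the ball. Suppose $Z$ has positive measure and the Jacobian of $u$ (in bi-Lipschitz charts) vanishes on $Z$. Then $\Leb^N(u(Z))=0$ and $u_\#(R\rstr Z)=0$ for every $R$, so the constancy theorem in $\setR^N$ says nothing about $\theta$ on $Z$. Ruling this out is exactly Proposition \ref{prop:BilipPartition}. The paper proves it from Lemma \ref{lem:CriuNegl} (a.e.\ nondegeneracy of $u$, via the transformation theorem), Lemma \ref{lemma:splittingatsmallscale} (a fixed linear change makes $u$ an $\eps$-splitting map at all small scales), and a Lusin argument.

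You could instead bypass it with a multi-scale argument. Theorem \ref{thm:biholder} applies at every point of $\mathcal R_\eps$ and every small scale, so $\theta$ equals a constant off an $\eps$-fraction of each small ball. Doubling forces these constants to agree on nested and overlapping balls, and Lebesgue differentiation then gives local a.e.\ constancy.

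Two smaller points. First, $\|T\|=\haus^N$ alone does not give multiplicity one in Step 1, because the Ambrosio--Kirchheim area factor enters; it is your Step 2 mass comparison that yields it. Second, the non-injective ``slicing'' fallback is unnecessary, since the splitting map is injective by Theorem \ref{thm:biholder}(iii).

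For globalisation, your stated reason does not work. Path-connectedness of balls does not imply that $B_s(x)\cap\mathcal R_\eps$ is connected, and ``codimension-two sets do not disconnect open sets'' is false in general metric spaces. What works here is the geodesic-avoidance property from \cite[Cor.\ A.8]{KapMonGT} that the paper uses: since $\haus^{N-1}(X\setminus\mathcal R)=0$, for $p\in\mathcal R$ and $\haus^N$-a.e.\ $q$ some geodesic from $p$ to $q$ lies in $\mathcal R$.

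Also, the issue is not that the regular part of a ball could be disconnected; your own final claim says it is not. The issue is that connecting curves must stay in $B_s(x)$, because Lemma \ref{lem:loc-constant} needs $\|\partial S\|=0$ nearby. Localise as follows. If $B_{4\rho}(z)\subset B_s(x)$ and $a\in B_\rho(z)\cap\mathcal R$, then for a.e.\ $b\in B_\rho(z)$ such a geodesic has length $<2\rho$ and hence lies in $B_{3\rho}(z)$. Chaining small balls along it shows that $\theta$ is a.e.\ constant on $B_\rho(z)$. Then chain such balls along a path in $B_s(x)$.

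This point is in your favour. The chain argument in Proposition \ref{prop:locrep2} takes its curves from the global a.e.-connectedness of $\mathcal R$ (Definition \ref{def-weaklyConv}), which may leave $B_s(x)$, and the paper does not address this.
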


From the previous proposition we deduce uniqueness of orientability in the sense of currents as in Definition \ref{def:Orientable}, up to a sign.

\begin{corollary}\label{cor-2orientations}
Let $(X,\dist,\haus^N)$ be an $\RCD(K,N)$ space without boundary, for some $K \in \setR$ and $N \in \mathbb N$, that is orientable in the sense of currents by $T \in 
\LWic{X}{N},  \ \partial T=0$.  Then the only two integral currents without boundary orienting $(X,\dist,\haus^N)$ are either $T$ or $-T$. 
\end{corollary}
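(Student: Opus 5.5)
The plan is to apply Proposition \ref{prop:locrep} locally and then use connectedness of $X$ to make the resulting integer constant. Let $S\in \LWic{X}{N}$ be another current orienting $X$: $\partial S=0$, $\set(S)=X$ and $\norm{S}=\haus^N$. Since $\partial S=0$, we have $\|\partial S\|(B_s(x))=0$ for every ball. Proposition \ref{prop:locrep} therefore gives, for every $x\in X$ and $s>0$, an integer $k(x,s)$ with
\[
S\rstr B_s(x)=k(x,s)\,T\rstr B_s(x).
\]

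First I pin down $k(x,s)$. Take masses on both sides: $\norm{S}\rstr B_s(x)=|k(x,s)|\,\norm{T}\rstr B_s(x)$, that is, $\haus^N\rstr B_s(x)=|k(x,s)|\,\haus^N\rstr B_s(x)$. Balls in a non-collapsed $\RCD(K,N)$ space have positive $\haus^N$-measure, so $|k(x,s)|=1$ and $k(x,s)\in\{\pm1\}$.

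Next I show the sign does not depend on the ball. Fix $x_0\in X$. For $s<s'$ the two identities restricted to $B_s(x_0)$ give $(k(x_0,s)-k(x_0,s'))\,T\rstr B_s(x_0)=0$. Since $\norm{T}(B_s(x_0))>0$, this forces $k(x_0,s)=k(x_0,s')$. So $k:=k(x_0,s)$ is independent of $s$. Letting $s\to\infty$ and testing against any $(f,\pi)$ with $f$ of bounded support, which lies in some $B_s(x_0)$, we get $S=kT$ on all of $X$, hence $S=T$ or $S=-T$. Note that this argument uses only that balls exhaust $X$ and that they are nested; I don't need a separate connectedness argument, because all the balls share the common centre $x_0$.

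The only point needing care is the locality of restriction. Specifically, I need $(S\rstr B_{s'})\rstr B_s=S\rstr B_s$, and that a locally integral current is determined by its restrictions to an exhausting family of balls. Both are standard from the definition of restriction via $(f\chi_{B},\pi)$, together with the finiteness of mass on bounded sets, and I would check them briefly. Finally, both $T$ and $-T$ do orient $X$, since $\norm{-T}=\norm{T}$ and $\partial(-T)=0$. This completes the claim.
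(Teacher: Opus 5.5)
Your argument is correct and follows the paper's route: the paper also deduces the corollary from the local representation result (Proposition \ref{prop:locrep}, which it obtains from Proposition \ref{prop:locrep2}). The only difference is bookkeeping. The paper pins down $k=\pm1$ through the weights, namely $\theta_S$ is globally constant and $|\theta_S|=|\theta_T|=1$ by Proposition \ref{prop:locrep1}. You instead get $|k|=1$ by comparing masses and the global sign from nested balls with a common centre, which is a clean self-contained way to finish.
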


For simplicity of presentation, we decided to state the two result above for $\RCD$ spaces. However, in the paper we will prove Proposition \ref{prop:locrep} and  Corollary \ref{cor-2orientations} in the more general setting of metric measure spaces satisfying the structural -- but curvature free -- assumption (ETR) (for \emph{essential topological regularity}), see Definition \ref{def:assumptions}. Roughly, a metric measure space $(X,\dist,\haus^N)$ satisfies (ETR) if it contains a set $\mathcal{R}$ of full measure, such that each point $x\in \mathcal{R}$ has a neighbourhood which is rectifiable, homeomorphic to $\setR^n$, and with almost Euclidean volume. In Proposition \ref{prop-ex-assms}, we will prove that that (ETR) holds for  
\begin{itemize}
\item  Locally non-collapsed $\RCD$ spaces without boundary, i.e. $(X,\dist,\haus^N)$ with the following property: for every $x\in X$ there exists $K_x\in \setR$ and a closed neighbourhood $\bar{U}_x$ of $x$ such that $(\bar{U}_x, \dist|_{\bar{U}_x}, \haus^N)$ is an $\RCD(K_x,N)$ space without boundary. In particular, non-collapsed $\RCD$ spaces with no boundary.
\item Non-collapsed strong Kato limit
spaces, in the sense of Carron-Mondello-Tewodrose, see below.
\end{itemize}

We briefly describe the notion of a Kato limit. 
Non-collapsed strong Kato limit spaces, as well as variants under weaker assumptions,  have been thoroughly  studied  by Carron--Mondello--Tewodrose \cite{CMT:GT, CarronMondelloTewodrose2023, CMT:AFM}.
In what follows all the manifolds are assumed to have empty boundary. For a complete Riemannian manifold $(M,g)$ of dimension $N\geq 2$, define
\begin{equation*}
 \mathrm{k}_t(M,g)= \sup_{x \in M}\int_0^t\int_M H(s,x,y)\textrm{Ric}_{-}(y) \textrm{dvol}_g(y) \di s,
\end{equation*}
where $H$ is the heat kernel of $M$ and $\textrm{Ric}_{-} : M \to \setR_+$
is the negative part of the smallest eigenvalue of the Ricci tensor. Following \cite{CarronMondelloTewodrose2023}, 
we say that a family $\Lambda$ of complete $N$-dimensional Riemannian manifolds satisfies a \emph{strong Kato bound} if there exists $t_0>0$ and a non-decreasing function $f:(0,t_0]\to \setR_+$ with 
\begin{equation}\label{eq:f}
\lim_{t \to 0} \int_0^{t_0} \frac{f(t)}{t} \di t<\infty \quad \text{and} \quad f(t_0) \leq \frac{1}{N-2}\; , 
\end{equation}
such that each $(M,g)\in \Lambda$ satisfies
\begin{equation}\label{eq:Kato}
\mathrm{k}_t(M,g)\leq f(t), \qquad \forall t \in (0,t_0]. 
\end{equation}
Kato bounds are implied by uniform lower bounds on the Ricci curvature or by  suitable uniform $L^p$ estimates on $\textrm{Ric}_{-}$.

\begin{definition}[Carron--Mondello--Tewodrose]\label{def:strongKatoLimit}
A metric measure space $(X,\dist, \haus^N)$ is a \emph{non-collapsed strong Kato limit space} if there exists a sequence $\Lambda=\{(M_i, g_i )\}_{i\in \setN}$ of complete $N$-dimensional Riemannian manifolds satisfying
\begin{itemize}
\item a strong Kato bound (in the aforementioned sense); 
\item volume non-collapsing, i.e, there exists $v>0$ such that ${\rm vol}_{g_i}(B_{\sqrt{t_0}}(x_i))>v$, where $x_i\in M_i$ is a marked point; 
\end{itemize}
such that $(M_i, g_i, x_i)$ converges to $(X,\dist, x)$ in the pointed Gromov-Hausdorff sense.
\end{definition}

Next, we compare the pGH limits and local flat limits of sequences of RCD spaces and strong Kato limits oriented in the sense of currents.  For details about the local flat topology see  Definition \ref{def:locFlatTop} and Theorem \ref{thm-compactnessLW}.

\begin{theorem}\label{thm-main}
Let $(X_i,\dist_i,\haus^N,x_i)$ be  a sequence of  $\RCD(K,N)$ spaces,  for some $K \in \setR$ and $N \in \mathbb N$, converging in the pGH topology to $(X,\dist,x)$. 
Assume that each $X_i$ has no boundary and is oriented by the $N$-dimensional local integral current $T_i \in \LWic{X_i}{N}, \ \partial T_i=0$. Then, either one of the following holds:
\begin{itemize} 
\item In the collapsing case, where
$\lim_{i \to \infty}\haus^N(X_i) =0$, 
the sequence $T_i$  converges in the local flat topology to the zero current $T=0 \in \LWic{X}{N}$.
\item In the non-collapsing case, where
$\lim_{i \to \infty}\haus^N(X_i) >0$, it holds that
the currents $T_i$ subconverge in the local flat topology to a non zero current $T \in \LWic{X}{N}$ with $\partial T=0$, $\|T\|=\haus^N$, $X=\set(T)$. In particular, $(X,\dist,\haus^N)$ is orientable and has no boundary in the sense of currents.
\end{itemize}
\end{theorem}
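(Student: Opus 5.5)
We plan to run the argument through the Lang--Wenger compactness theorem for locally integral currents (Theorem \ref{thm-compactnessLW}). First we embed all $(X_i,\dist_i,x_i)$ and $(X,\dist,x)$ isometrically into a common complete separable metric space $Z$, for instance a proper space realising the pGH convergence, and push the $T_i$ forward. For every $r>0$, Bishop--Gromov gives a uniform bound $\norm{T_i}(B_r(x_i))=\haus^N(B_r(x_i))\le C(K,N,r)$, and $\partial T_i=0$. Hence the masses and boundary masses are uniformly locally bounded, and a subsequence converges in the local flat topology to some $T\in\LWic{Z}{N}$ with $\partial T=0$ and $\spt T\subset X$. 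Interpreting $\haus^N(X_i)$ as $\haus^N(B_1(x_i))$ (after passing to a subsequence so that the limit exists), the dichotomy is between the collapsed and non-collapsed regimes. In the collapsing case, volume convergence and Bishop--Gromov give $\haus^N(B_r(x_i))\to0$ for every $r$. Lower semicontinuity of mass then yields $\norm{T}(B_r(x))\le\liminf\norm{T_i}(B_r(x_i))=0$, so $T=0$. Because every subsequence has the same limit, the whole sequence converges.

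In the non-collapsed case, the limit $(X,\dist,\haus^N)$ is a non-collapsed $\RCD(K,N)$ space by De Philippis--Gigli, and it has no boundary by Bru\'e--Naber--Semola stability of boundarylessness. Colding volume convergence gives $\haus^N\rstr X_i\rightharpoonup\haus^N\rstr X$ weakly in $Z$. Lower semicontinuity of mass gives $\norm{T}\le\haus^N\rstr X$. For the reverse inequality we work at regular points, using (ETR) on $X$. We fix $y\in\mathcal R$ and a small ball $B_s(y)$ that is almost Euclidean, meaning $\haus^N(B_s(y))\ge(1-\eps)\omega_Ns^N$ and Reifenberg flat at all smaller scales. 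By Cheeger--Colding, the balls $B_s(y_i)\subset X_i$ are eventually $\eps$-GH close to Euclidean balls, so Reifenberg's theorem gives bi-H\"older charts. On these charts Proposition \ref{prop:locrep} applies (each $X_i$ is oriented by $T_i$), and the restriction of $T_i$ is $\pm$ the fundamental class. The goal is to show that the limit of $T_i\rstr B_s(y_i)$ has density $1$ rather than cancelling.

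The main obstacle is this non-cancellation. We would use a degree argument. We take $\eps$-GH approximations $\phi_i\colon B_s(y_i)\to B_s(0)\subset\setR^N$, which can be chosen Lipschitz with controlled constant, e.g.\ from harmonic $\delta$-splitting maps, which are $(1+\delta)$-Lipschitz and have degree $\pm1$ onto a slightly smaller ball. By slicing and the constancy theorem in $\setR^N$, $(\phi_i)_\#(T_i\rstr B_s(y_i))=\pm\curr{B_{s'}(0)}$ on $B_{s'}(0)$, after a good choice of $s'$ for which the boundary lands outside. Passing to the limit with the limiting splitting map $\phi$ on $X$ gives $\phi_\#(T\rstr B_s(y))=\pm\curr{B_{s'}}$, so $\norm{T}(B_s(y))\ge(1+\delta)^{-N}\omega_N (s')^N$. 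Comparing with $\haus^N(B_s(y))\le(1+\eps)\omega_Ns^N$ and letting $\delta,\eps\to0$ and $s'\to s$, we get density $\Theta^N(\norm{T},y)\ge1$ at every regular point. Combined with $\norm{T}\le\haus^N$ and $\haus^N(X\setminus\mathcal R)=0$, this gives $\norm{T}=\haus^N\rstr X$. Then $\set(T)=X$ follows because $\norm{T}$ charges every ball by the Ahlfors regularity of $\haus^N$, and $T\ne0$. Corollary \ref{cor-2orientations} does not force uniqueness of the subsequential limit here, only up to sign, which is why the statement says ``subconverge''.
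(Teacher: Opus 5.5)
Your proposal is correct and follows essentially the same route as the paper. It uses Lang--Wenger compactness with Bishop--Gromov mass bounds, and lower semicontinuity of mass in the collapsed case. In the non-collapsed case it runs a Matveev--Portegies type degree argument with splitting maps (in the paper, Lemma~\ref{le:L1DegConv} applied to $T_i\rstr B_r$ for a good radius $r$) to exclude cancellation and obtain density one at regular points, hence $\norm{T}=\haus^N$ and $\set(T)=X$.

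The one point to repair is your claim that the local flat convergence takes place inside a proper space $Z$ realising the pGH convergence. This need not hold, because the fillings $S_i$ may not exist in $Z$. Work instead in the ambient space provided by Theorems~\ref{thm-compactnessLW} and~\ref{thm-IFsubsetGH}, as the paper does, or embed $Z$ into a Banach space such as $\ell^\infty(Z)$; the rest of your argument then goes through unchanged.
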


 An analogous result holds for non-collapsed strong Kato limits: 

\begin{theorem}\label{thm-main2}
Let $(M_i,g_i,x_i)$ be a non-collapsing sequence of complete, $N$-dimensional oriented Riemannian manifolds satisfying a strong Kato bound, converging in the pGH topology to $(X,\dist,x)$. Then $(X,\dist,\haus^N)$ is orientable and has no boundary in the sense of currents.

More precisely, denoting by  $T_i \in \LWic{M_i}{N}, \ \partial T_i=0, \ \|T_i\|={\rm dvol}_{g_i}$, a local integral current orienting $M_i$, it holds that $T_i$  subconverge in the local flat topology to a non zero current $T \in \LWic{X}{N}$, with $\partial T=0$, $\|T\|=\haus^N$, and $X=\set(T)$.
\end{theorem}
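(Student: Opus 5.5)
The strategy is to run the same compactness-plus-identification argument as for Theorem \ref{thm-main}, replacing the $\RCD$ structure theory with the structure theory of non-collapsed strong Kato limits of Carron--Mondello--Tewodrose.

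\textbf{Step 1: uniform bounds.} Fix a complete separable metric space $Z$ into which $(M_i,x_i)$ and $(X,x)$ embed isometrically and realise the pGH convergence. The strong Kato bound \eqref{eq:Kato}, together with non-collapsing, gives uniform Bishop--Gromov-type and Ahlfors-type volume bounds on balls of radius $\le R$ for each fixed $R$, with constants independent of $i$. Hence $\|T_i\|(B_R(x_i))=\mathrm{vol}_{g_i}(B_R(x_i))\le C(R)$, while $\partial T_i=0$. The compactness theorem for locally integral currents (Theorem \ref{thm-compactnessLW}) then yields a subsequence $T_i\to T$ in the local flat topology in $Z$. The limit satisfies $T\in\LWic{Z}{N}$, $\partial T=0$ and $\spt T\subset X$, and lower semicontinuity of mass gives $\|T\|\le \lim\mathrm{vol}_{g_i}=\haus^N\rstr X$. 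For the last equality we use volume convergence, which holds for non-collapsed strong Kato limits by \cite{CarronMondelloTewodrose2023}.

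\textbf{Step 2: $\|T\|=\haus^N$ and $\set(T)=X$.} This is the main obstacle, since it amounts to ruling out cancellation in the limit. By Proposition \ref{prop-ex-assms}, $X$ satisfies (ETR). Take $y\in\mathcal R$ and a small ball $B_r(y)$ that is bi-Lipschitz (or almost isometrically) close to a Euclidean ball. By Reifenberg/Cheeger--Colding type results available in the Kato setting, the corresponding balls $B_r(y_i)\subset M_i$ are $\epsilon$-GH close to $B_r(0^N)$ and homeomorphic to balls, with almost Euclidean volume.

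On such balls I would use a degree argument. Choose $\epsilon$-splitting (harmonic almost-coordinate) maps $u_i:B_r(y_i)\to\setR^N$, converging to a map $u$ that is bi-Lipschitz onto its image on $B_r(y)$. For each $i$, the pushforward $(u_i)_\#(T_i\rstr B_r(y_i))$ coincides, away from a neighbourhood of $u_i(\partial B_r)$, with $\deg(u_i)\curr{\,\cdot\,}$. The degree is $\pm1$ because $u_i$ is an almost-isometric homeomorphism onto a region containing $B_{r/2}(0)$. Up to choosing the sign of $u_i$ (which is harmless), the degree is $+1$. Passing to the limit, $u_\#(T\rstr B_{r/2}(y))$ has density $1$ on a definite Euclidean ball, so $T\neq0$ near $y$ and $\|T\|\ge c\,\haus^N$ there.

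Next, $T\rstr B_s(y)$ has no boundary inside the ball, and by Proposition \ref{prop:locrep} (proved under (ETR) with local orientability; alternatively, by the constancy theorem applied on the topological ball) it has constant integer multiplicity times an orientation of that ball. The mass bound $\|T\|\le\haus^N$ forces the multiplicity to be $\pm1$, so $\|T\|=\haus^N$ on $\mathcal R$. Since $\haus^N(X\setminus\mathcal R)=0$ and $\|T\|\le\haus^N$, we get $\|T\|=\haus^N$ everywhere. In addition, (LBD), the positive lower density bound, gives $\set(T)=X$.

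\textbf{Step 3: conclusion.} Thus $T$ is a locally integral current on $X$ with $\partial T=0$, $\|T\|=\haus^N$ and $\set(T)=X$. This is exactly orientability with no boundary in the sense of Definition \ref{def:Orientable}, and $T\neq0$ by non-collapsing.

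The delicate point in Step 2 is making the degree argument uniform in $i$: one needs the $\epsilon$-regularity/Reifenberg theory for strong Kato limits to hold for the approximating manifolds at scale $r$ independently of $i$. I would take this from \cite{CMT:GT, CarronMondelloTewodrose2023}.
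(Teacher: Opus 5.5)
Your architecture is the paper's: volume bounds from the Kato condition feed Theorem \ref{thm-compactnessLW}, and volume convergence bounds $\|T\|$ from above. Splitting maps and degrees handle neighbourhoods of regular points, and (LDB) gives $\set(T)=X$. The genuine variation is how you fix the multiplicity. You apply the constancy Lemma \ref{lem:loc-constant} to the limit $T$ itself and use the degree of $u_i$ only to show $T\neq0$ near $y$. The paper instead carries $\theta_{T_i}\equiv1$ over to $\theta_T$ through the $L^1$ convergence of degrees in Lemma \ref{le:L1DegConv]}. Your passage to the limit in the degree step is that same lemma, applied to $T_i\rstr B_{r'}$ at a radius where the restrictions converge in flat norm. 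Your variation is legitimate, but only your second option is admissible. Proposition \ref{prop:locrep} presupposes an orientation of $X$, which is exactly what you are constructing.

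The step that does not follow as written is ``the mass bound $\|T\|\le\haus^N$ forces the multiplicity to be $\pm1$, so $\|T\|=\haus^N$ on $\mathcal R$''. By \eqref{eq-TmeasRep}, $\|T\|=\lambda\,|\theta_T|\,\haus^N\rstr\set(T)$ with an area factor $\lambda\in[N^{-N/2},2^N/\omega_N]$. So $\|T\|\le\haus^N$ only gives $\lambda|\theta_T|\le1$. This does not exclude $|\theta_T|\ge2$ (if $\lambda\le1/2$), nor $\|T\|<\haus^N$ (if $\lambda<1$). Your bound $\|T\|\ge c\,\haus^N$ with a fixed $c<1$ does not close the gap.

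You must either prove $\lambda=1$ a.e. (Euclidean approximate tangent norms), or do what the paper does. Slice with the injective, componentwise $(1+\eps)$-Lipschitz chart $u$ as in \eqref{eq-weightT}, using $\haus^N(B_s(p)\setminus\set(T))=0$ from Lemma \ref{lem:loc-constant}. This gives $(1+\eps)^N\|T\|(B_s(p))\ge|\theta_T|\,\Leb^N(u(B_s(p)))\ge|\theta_T|\,\omega_N(1-2\eps)^Ns^N$. Combined with $\|T\|(B_s(p))\le(1+\eps)\omega_Ns^N$, this yields $|\theta_T|<2$, hence $|\theta_T|=1$. The same estimate, available at every regular point for all small $s$ with $\eps\to0$, then gives $\Theta^N_*(\|T\|,p)\ge1$ on $\mathcal R$. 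Together with $\|T\|\le\haus^N$ and $\haus^N(X\setminus\mathcal R)=0$, this yields $\|T\|=\haus^N$.

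Two smaller corrections. First, the limit map $u$ is in general only a Lipschitz homeomorphism that is biLipschitz on the pieces $U_j$ of (ETR), not on all of $B_r(y)$; injectivity is all you actually use. Second, Theorem \ref{thm-compactnessLW} produces its own ambient space $W$, not flat convergence in a prescribed $Z$. Either work in $W$ and pull $T$ back to $X$ via $\psi$, as the paper does, or justify compactness in a fixed Banach space separately.
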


\begin{remark}
 The statements in Theorems \ref{thm-main} and \ref{thm-main2}, asserting that non-collapsed limits of spaces without boundary still have no boundary (in the sense of currents), should be compared with the analogous property — proved by Cheeger–Colding \cite{CC:97} and later extended to non-collapsed $\RCD$ spaces \cite[Thm.\ 5.1]{KapMonGT} — that non-collapsed Ricci limits of manifolds without boundary also have no boundary (in the sense of tangent cones).
\end{remark}

\begin{remark}\label{rmrk-GHIFcase}
Under the hypotheses of 
Theorem \ref{thm-main} (resp.\ Theorem \ref{thm-main2}), if additionally the metric spaces $(X_i,\dist_i)$ (resp.\ $(M_i,g_i)$) have uniformly bounded diameters, then it is not necessary to fix points $x_i$, and pGH convergence can be replaced with GH convergence. Furthermore,
following the terminology of integral current spaces introduced by Sormani and Wenger \cite{SW2}, one can add that
\begin{itemize} 
\item In the collapsing case, 
$(X,\dist,T)$ is the zero $N$-dimensional integral current space.
\item In the non-collapsing case,  $(X,\dist,T)$ is an $N$-dimensional integral current space. Moreover, the Gromov-Hausdorff and intrinsic flat limits agree. 
\end{itemize}

Concerning the second statement, recall that the question of the equivalence GH$=$IF was originally motivated by Wenger’s compactness theorem \cite{Wenger} and by the introduction of intrinsic flat (IF) convergence by Sormani–Wenger \cite{SW2}. This problem has since been addressed in several works under a variety of assumptions; see, for instance, \cite{SW1, MatveevPortegies17, LiPerales, PeralesBdry, PorSor, PeralesNZ}.
\end{remark}

\subsection*{Organization of the paper and ideas of the proofs}

For the proofs, we have been inspired by the approach that Matveev–Portegies \cite{MatveevPortegies17} used for proving GH=IF in the setting of compact, non-collapsed Ricci limit spaces. However, the more general framework considered in the present paper requires several new ideas; below we briefly summarize these together with the main steps of the proofs.

Rectifiable currents admit two representations. On the one hand, they can be written as sums of pushforwards of currents defined on Euclidean spaces. On the other hand, in close analogy with the classical setting, they can be represented as integrals over a rectifiable set involving the weight function of $T$
and the pairing between an orientation and test forms. The latter representation requires the ambient space to be a  \(w^*\)-separable dual space and was used extensively in \cite{MatveevPortegies17}. In contrast, throughout this paper we work exclusively with the representation of rectifiable currents as sums of pushforward currents. We refer to Remark \ref{rem:AltMPwSep} for further discussion.

A key tool in our work is the use of splitting maps around regular points, as developed by Brué–Naber–Semola \cite{BrueNaberSemola20} (building on \cite{CC:97, CJN:21}). In Section \ref{SS:CritHarm} we establish the following result, which is of independent interest and may be useful in other contexts: if $u:B_1(x)\to \setR^N$ is a $\delta$-splitting map, then $B_1(x)$ can be covered, up to a set of measure zero, by a countable family of disjoint Borel sets $U_n$, such that each restriction $u|_{U_n}:U_n\to \setR^N$ is biLipschitz onto its image; see Proposition \ref{prop:BilipPartition}.

We conclude Section \ref{sec-BilipProperties} by abstracting the properties required for the proofs of Proposition \ref{prop:locrep} and Corollary \ref{cor-2orientations} to hold, and by showing that these assumptions are satisfied by locally non-collapsed 
$\RCD$ spaces and by non-collapsed strong Kato limits. The motivation for this is to make our results applicable to a wider range of potential settings.

In the first part of Section \ref{sec-ProofsPropCor} we present the proofs of Proposition \ref{prop:locrep} and Corollary \ref{cor-2orientations}. The argument proceeds as follows. Using the atlas given by the restrictions $u|_{U_n \cap \set(T)}:U_n\to \setR^N$, we write the orientations as sums of pushforwards of currents in Euclidean spaces. Relying on the properties of $u$ encoded in (ETR), together with a local constancy theorem for currents in Euclidean spaces, Corollary \ref{cor-constant}, we obtain a local constancy theorem for currents in metric spaces around regular points. As a consequence, assuming a.e.-connectedness of the  regular set (see Definition
\ref{def-weaklyConv}) and (LBD), we deduce that the weight functions of the currents 
$T$ and $S$ appearing in Proposition \ref{prop:locrep} are locally constant. To show that the local multiplicity function of 
$T$ takes values only 
in \(\{1,-1\}\) 
 we use the fact that the weight of  \(T\) coincides, up to sign, with the weight of its \(0\)-dimensional slices under \(u\), see \eqref{eq-0IntCurrRep}.

In the second part of Section \ref{sec-ProofsPropCor} we complete the proofs of Theorem \ref{thm-main} and Theorem \ref{thm-main2}.
 We begin with the proof of Theorem~\ref{thm-main} in the compact case; see Remark~\ref{rmrk-GHIFcase}. By Sormani-Wenger's compactness theorem  \cite{SW2}, we may assume that both the Gromov-Hausdorff and intrinsic flat convergences are realized within a single metric space, and that the intrinsic flat limit is a subset of the Gromov-Hausdorff limit. Our first step is to show that the multiplicity function $\theta_T$ is constant and equal to either $1$ or $-1$ in neighborhoods of regular points of $X$. This is achieved by combining the results of Matveev--Portegies \cite{MatveevPortegies17}, which relate the multiplicity of a current $T$ to the degrees of its Lipschitz charts with respect to $T$, together with an $L^1$ estimate for these degrees. Using the properties of $\delta$-splitting maps encoded in (ETR), we deduce that the multiplicity functions $\theta_{T_i}$ subconverge to $\theta_T$ in neighborhoods of regular points of $X$. Once this is established, we argue as in the proof of Proposition~\ref{prop:locrep}. Using the relation between the mass of $T$ and the integral of the masses of its slices, we conclude that the regular set of $X$ is contained in $\set(T)$ and that $\|T\| = \mathcal{H}^N$. Bishop-Gromov volume monotonicity will imply that $X=\set(T)$. In the non-compact case, we apply Lang-Wenger's convergence theorem~\cite{LangWenger} to obtain a limit current $T \in \LWic{W}{N}$ in a complete metric space $W$. By restricting the sequence of currents to bounded regions, we reduce the argument to the compact case.

The proof of Theorem~\ref{thm-main2} follows along the same lines as Theorem \ref{thm-main}, using the structural properties for strong Kato limits established in~\cite{CarronMondelloTewodrose2023}.  We emphasize, however, that the results in the strong Kato class \emph{do not} follow directly from the statements in the framework of non-collapsed $\RCD$ spaces; indeed, by \cite{CarronMondelloTewodrose2023}, strong Kato limits are biLipshitz equivalent to \emph{collapsed} $\RCD$ spaces.

\bigskip

{\bf Acknowledgements:} The authors gratefully acknowledge support from the Simons Center for Geometry and Physics where they had the opportunity to meet as part of the Program {\em Geometry and Convergence in Mathematical General Relativity} in September 2025. The authors wish to thank J.\ Portegies and D.\ Semola for helpful discussions.

\section{Preliminaries}

\subsection{RCD spaces}
Throughout the paper, $(X,\dist)$ is a complete and separable metric space, endowed with a Borel non-negative measure $\meas$.  The triple $(X,\dist, \meas)$ is called a \emph{metric measure space}. When a reference point $\bar{x}$ is fixed, the tuple $(X,\dist, \meas, \bar{x})$ is called \emph{a pointed metric measure space}.

On metric measure spaces $(X,\dist, \meas)$, Sturm \cite{Sturm:I, Sturm:II} and Lott-Villani \cite{LV:AnnMath} introduced the $\CD(K,N)$ condition, encoding that the Ricci curvature is bounded below by $K\in \mathbb{R}$ and the dimension is bounded above by $N\in [1,\infty]$ in a synthetic sense, via optimal transport. In order to isolate the Riemannian-like structures from the Finslerian ones, Ambrosio-Gigli-Savar\'e \cite{AGS:14b} and Gigli \cite{G:15} (see also \cite{AGMR}), reinforced the Lott-Sturm-Villani $\CD$ condition by asking the linearity of the heat flow or, equivalently, that the Sobolev space $W^{1,2}(X,\dist, \meas)$ is a Hilbert space. The resulting "Riemannian-like" curvature-dimension condition is known as $\RCD(K,N)$. The $\RCD(K,N)$ condition can be equivalently characterized in terms of the Bochner inequality \cite{AGS:15, EKS:15, AMS:19} and it satisfies a local-to-global property \cite{CaMi:21} (see also \cite{Li:24}). Inspired by Cheeger-Colding theory of Ricci-limit spaces \cite{CC:97}, an $\RCD(K,N)$ is said \cite{DPG}  to be \emph{non-collapsed} if $\meas=\haus^N$, the $N$-dimensional Hausdorff measure (see also \cite{H:20, BGHZ:23}). In this case, $N$ must be an integer. For more about $\CD$ and $\RCD$ spaces we refer to the surveys \cite{Vil:BS, Amb:ICM, Sturm:ECM, G:Survey}.

\subsubsection{Singular strata and boundaries}

We say that a pointed metric space \ $(Y,\dist_Y, y)$ is \emph{tangent} to $(X,\dist)$ at $x \in X$ if there exists a sequence $r_i\downarrow 0$ such that 
 \[
 (X,r_i^{-1}\dist,x)\rightarrow(Y,\dist_Y,y)
 \]
 in the pGH-topology. The collection of all the tangent spaces of $(X,\dist)$ at $x$ is denoted by $\Tan_x(X,\dist)$. A compactness argument originally due to Gromov \cite{Gromov:Book}, yields that $\Tan_x(X,\dist)$ is non-empty for every $x\in X$, if $(X, \dist, \haus^N)$ is a non-collapsed $\RCD(K,N)$ space.  The set of \emph{regular} points is defined as 
\begin{equation}\label{eq-regularset}
\mathcal{R}:=\left\lbrace x\in X: \Tan_x(X,\dist)=\{(\setR^N,\dist_{\mathrm{eucl}})\}\right\rbrace 
\end{equation}	
and, for any $0\le k\le N-1$,
\begin{equation}
\mathcal{S}^k:=\left\lbrace x\in X: \text{no tangent space at $x$ splits off $\setR^{k+1}$}\right\rbrace\,. 
\end{equation} 
The following   \textit{stratification of the singular set} $\mathcal{S}:=X\setminus\mathcal{R}$ holds: 
\begin{equation}
\mathcal{S}^0\subset \mathcal{S}^1\subset\dots\subset\mathcal{S}^{N-1}=\mathcal{S}.
\end{equation}

A quantitative analysis of the regular and singular strata leads to the definition of a $(k,\delta)$-symmetric ball and a $\delta$-splitting map \cite{ChNa:13}. A ball $B_s(x) \subset X$
is said to be $(k,\delta)$-symmetric provided 
\[
\dist_{GH}(B_s(x),B_s(z)) \le \delta s,
\]
where $z\in  Z \times \setR^k$ is a tip of the metric cone $C(Z) \times \setR^k$, for some metric space $(Z,\dist_Z)$.  The notion of $\delta$-splitting map recalled below has been extremely powerful in developing a  structure theory of Ricci limits \cite{CC:97, CC:II, CC:III} and $\RCD(K,N)$ spaces \cite{MondinoNaber19, BruePasqualettoSemola21}.

\begin{definition}\label{def:deltasplitting}
Let $N\in [1,\infty)$ and $k\in \setN, k\leq N$.
	Let $(X,\dist,\meas)$ be an $\RCD(-(N-1),N)$ space, $x\in X$ and $\delta>0$ be fixed.
	We say that $u=(u_1,\ldots,u_k):B_r(x)\to \setR^k$ is a $\delta$-splitting map provided 
	\begin{itemize}
		\item[(i)]
  $u_i$ is harmonic and $|\nabla u_i|< C(N)$, for all $i=1,\ldots,k$;
		\item[(ii)] $r^2\fint_{B_r(x)} |\Hess u_i|^2\di \meas <\delta$, for all $i=1,\ldots,k$;
		\item[(iii)] $\fint_{B_r(x)} |\nabla u_i\cdot \nabla u_j-\delta_{ij}|\di \meas <\delta$, for all $i,j=1,\ldots,k$.
	\end{itemize}
\end{definition}

For the proof of the next theorem, we refer to \cite[Theorem 3.8]{BrueNaberSemola20}; see also \cite{BruePasqualettoSemola21} for similar results.

\begin{theorem}[$\delta$-splitting vs $\eps$-GH isometry]
	\label{splitting vs isometry}
	Let $1\le N<\infty$ be fixed. 
	
	\begin{itemize}
		\item[(i)] For every $0<\delta<1/2$ and $\eps\le \eps(N,\delta)$, the following holds. If $(X,\dist,\meas)$ is an $\RCD(-\eps(N-1),N)$ space  satisfying
		\begin{equation}\label{eq:zzz5}
		\dist_{mGH}(B_2(x),B_2^{\setR^k\times Z}(0,z))\le \eps
		\end{equation}
		for some integer $k$, some $x\in X$ and some pointed metric space $(Z,\dist_Z, z)$, then
		there exists a $\delta$-splitting map \[
  u=(u_1,\ldots,u_k):B_1(x)\to \setR^k.
  \]
		
		\item[(ii)] For every $\eps>0$ and $\delta < \delta(N,\eps)$ the following holds. If  $(X,\dist,\meas)$ is a normalized $\RCD(-\delta(N-1),N)$ space and there exists a $\delta$-splitting map $u:B_6(x)\to \setR^k$ for a given $x\in X$, then
	    \begin{equation}
	     \dist_{GH}(B_{1/k}(x),B^{\setR^k\times Z}_{1/k}(0,z))< \eps
	    \end{equation}
    	for some pointed metric space $(Z,\dist_Z,z)$. Moreover, there exists $f:B_{1}(x)\to Z$ such that
    	\begin{equation}
    	(u-u(x),f):B_{1/k}(x)\to B_{1/k}^{\setR^k\times Z}(0,z)
    	\quad \text{is an $\eps$-GH isometry.}
	    \end{equation} 	 
	    \item[(iii)] If we additionally assume that $(X,\dist,\haus^N)$ is a $\RCD(-\delta(N-1),N)$ non-collapsed space with $\haus^N(B_1(x))>v>0$, $k=N-1$, and $\delta<\delta(N,v,\eps)$, then $(Z,\dist_Z,z)$ in (ii) can be chosen to be the ball of a one dimensional Riemannian manifold, possibly with boundary.
	
    \end{itemize}
\end{theorem}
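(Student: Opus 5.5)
The three items are proved separately, each by a contradiction and compactness argument built on Gigli's splitting theorem and the stability of the $\RCD$ condition under pmGH convergence. Throughout I use the Bochner inequality and the Cheng–Yau gradient estimate valid in $\RCD(-\eps(N-1),N)$ spaces, together with the $W^{1,2}$ convergence of harmonic functions along converging sequences (Ambrosio–Honda).

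For (i), I start from the $\eps$-GH approximation in \eqref{eq:zzz5} and take the Euclidean coordinate functions $y_1,\dots,y_k$ on $\setR^k\times Z$. I pull them back via the approximation and smooth them with heat-flow or Lipschitz-extension regularizations, obtaining functions $\tilde u_i$ on $B_2(x)$ that are almost $1$-Lipschitz and almost affine along almost-geodesics. I then let $u_i$ be the harmonic replacement of $\tilde u_i$ on $B_{3/2}(x)$.

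\begin{itemize}
\item \emph{Closeness.} I argue by contradiction: along a sequence $\eps_j\to0$, the spaces converge to $\setR^k\times Z$, and the $u_i$ converge uniformly and in $W^{1,2}$ to harmonic functions. These limits are the harmonic replacements of the coordinate functions, hence the coordinates themselves. This gives $\fint|\nabla u_i\cdot\nabla u_j-\delta_{ij}|\to0$, using the $L^2$ convergence of gradients.
\item \emph{Gradient bound.} The bound $|\nabla u_i|\le C(N)$ on $B_1(x)$ follows from Cheng–Yau.
\item \emph{Hessian bound.} I integrate the Bochner inequality against a good cut-off function, which exists in $\RCD$ spaces (Mondino–Naber). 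Since $\Delta u_i=0$, this bounds $\fint|\Hess u_i|^2$ by $C\fint\bigl(\bigl||\nabla u_i|^2-1\bigr|\bigr)+C\eps$, and the right-hand side is small by the previous step.
\end{itemize}

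For (ii), I again argue by contradiction. Suppose $\delta_j\to0$, with $\delta_j$-splitting maps $u^j$ on $B_6(x_j)$, but no $\eps$-GH isometry of the stated form. Up to a subsequence, the spaces converge in the pmGH sense to an $\RCD(0,N)$ space $Y$. The maps $u^j$ converge locally uniformly and in $W^{1,2}_{\loc}$ to a harmonic map $u:B_6(y)\to\setR^k$ with $\Hess u_i=0$ and $\nabla u_i\cdot\nabla u_j=\delta_{ij}$ a.e. The lower semicontinuity of the Hessian energy, available from the second-order calculus of Gigli, yields this limit identity.

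The main obstacle in (ii) is localization: Gigli's splitting theorem is global, whereas here $u$ is only defined on $B_6$. I would first try the local version of the splitting: the gradient flows of $\nabla u_i$ act by measure-preserving isometries on the relevant subdomain (Gigli's proof is local in nature), which gives $B_{1/k}(y)$ isometric to a ball in $\setR^k\times Z$ with $u$ as the first factor. I would take $f$ to be the projection onto the $Z$-factor, composed with the GH approximations to transfer it back to $B_1(x)$. Then $(u^j-u^j(x_j),f)$ is an $\eps$-GH isometry for $j$ large, a contradiction. The shrinking radius $1/k$ absorbs the loss coming from the flow running only for short times in each direction.

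For (iii), I add the non-collapsing and $k=N-1$. By volume convergence (De Philippis–Gigli), the limit of the contradiction argument is non-collapsed $\RCD(0,N)$ with $\haus^N$. The factor $Z$ is then a one-dimensional non-collapsed $\RCD(0,1)$ space, hence a segment, ray, line or circle, that is, a $1$-dimensional Riemannian manifold possibly with boundary. The lower bound $v$ rules out degenerate points. Compactness then gives the claim for $\delta<\delta(N,v,\eps)$.
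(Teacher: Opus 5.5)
The paper does not prove this theorem; it quotes it from Bru\`e--Naber--Semola (Theorem 3.8 there), so your sketch has to stand on its own. Part (i) is sound. Your almost $1$-Lipschitz boundary data are exactly what gives the energy upper bound, hence strong $W^{1,2}$ convergence of the data and stability of the Dirichlet problems. Note only that the model ball must carry the product measure for the coordinates to be harmonic in the limit.

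The genuine gap is in (ii), at the step you flag yourself. Everything up to the limit map $u\colon B_6(y)\to\setR^k$ on an $\RCD(0,N)$ space with $\Hess u_i=0$ and $\nabla u_i\cdot\nabla u_j=\delta_{ij}$ is standard. But the claim that $B_{1/k}(y)$ is then isometric to $B^{\setR^k\times Z}_{1/k}(0,z)$ via $(u-u(y),f)$ is essentially equivalent to (ii) itself: on the limit, $u$ is $\delta$-splitting for every $\delta>0$, so (ii) with $\eps\to0$ yields it. Your contradiction argument has therefore only moved the whole difficulty into the limit, and ``Gigli's proof is local in nature'' does not resolve it. Gigli's argument is built on global objects:
\begin{itemize}
\item a Busemann function $b$ defined on the whole space;
\item the regular Lagrangian flow of $\nabla b$ for all times (the Ambrosio--Trevisan theory is formulated for vector fields on the whole space);
\item measure preservation and the isometry property of the flow maps, obtained via the Sobolev-to-Lipschitz property of the whole space;
\item the identification of the space with $\setR\times b^{-1}(0)$.
\end{itemize}

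Localizing this requires real work that you do not indicate:
\begin{itemize}
\item cutting off $\nabla u_i$, which destroys $\Hess u_i=0$ near the cut, and checking that trajectories issued from $B_{1/k}(y)$ stay where the cut-off equals $1$;
\item a local Sobolev-to-Lipschitz statement, which only controls increments along curves inside the domain, so you must control where geodesics between the relevant points lie;
\item for $k\ge 2$, commutation of the $k$ flows (or a substitute), which you do not mention and without which no $\setR^k$ factor is assembled;
\item a local Pythagorean identity $\dist(p,q)^2=|u(p)-u(q)|^2+\dist_Z(f(p),f(q))^2$.
\end{itemize}
You must either prove such a local splitting theorem, or avoid the limit and work quantitatively on $X$ itself. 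The latter would follow the lines of the Cheeger--Colding almost splitting theorem (segment inequality plus the $L^2$ Hessian bound along flow lines of $\nabla u_i$), whose $\RCD$ version is due to Mondino--Naber.

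The same issue affects (iii). A merely local product structure does not give $Z$ an $\RCD(0,1)$ structure; that is part of the conclusion of the global splitting theorem. So you cannot directly invoke the classification of one-dimensional $\RCD$ spaces, and you need a separate argument. For instance: tangent cones at points of the split region are $\setR^{N-1}\times C$ with $C$ a one-dimensional metric cone, and non-collapsing together with essential non-branching force $C$ to be $\setR$ or $[0,\infty)$.
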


Two different notions of boundary for a non-collapsed $\RCD(K,N)$ space $(X,\dist,\haus^N)$ have been proposed \cite{DPG,KapMonGT}; however, it follows from \cite[Theorem 6.6.(i)]{BrueNaberSemola20} that the fact of $X$ having empty boundary is equivalent in the two formulations.  Throughout this note, we will assume that $X$ has empty boundary, i.e., $\mathcal{S}^{N-1}\setminus\mathcal{S}^{N-2}=\emptyset$. By  \cite{DPG}, this is equivalent to requiring that
\begin{equation}\label{eq:SingCod>1}
{\rm Codim_{Hauss}}(X\setminus \mathcal R)>1,
\end{equation} 
where ${\rm Codim_{Haus}}$ denotes the Hausdorff codimension.

The following theorem gives topological regularity for a neighbourhood of a regular point and  useful regularity properties of  $\delta$-splitting maps. In the $\RCD$ setting, the former was proved in \cite{KapMonGT} employing Cheeger-Colding's  Reifenberg thorem for metric spaces \cite{CC:97}, while the latter  was proved in \cite{BrueNaberSemola20}. Both were previously established for Ricci limits, respectively by Cheeger-Colding \cite{CC:97} and by Cheeger-Jiang-Naber \cite{CJN:21}.

\begin{theorem}\label{thm:biholder}
	Let $N\in \setN,\ N\geq 1.$  For each $\eps \in (0, 1/5)$ there exists $\delta=\delta(\eps,N)>0$ such that for any $\RCD(-\delta(N-1),N)$ space $(X,\dist,\haus^N)$ and for any $(N, \delta)$-symmetric ball $B_{16}(p)\subset X$, the following holds: 
 \begin{itemize}
 \item The ball $B_8(p)$ is homeomorphic to a smooth $N$-dimensional manifold without boundary.
 \item There exists a map $u:B_8(p)\to\setR^{N}$ verifying the following properties:
	\begin{itemize}
		\item[i)] $u:B_8(p)\to\setR^{N}$ is an $\eps$-splitting map;
		\item[ii)] there exists a closed set $U\subset B_1(p)$ such that 
		\begin{equation}\label{eq:volest}
		\haus^{N}\left(B_1(p)\setminus U\right)\le \eps,
		\end{equation}
        and, for all $x,y\in U$:
		\begin{equation}\label{eq:bilipest}
		(1-\eps)\dist(x,y)\le \abs{u(x)-u(y)}\le (1+\eps)\dist(x,y); 
		\end{equation}
		\item[iii)] for all $x,y\in   B_1(p)$ it holds that \begin{equation}\label{eq:holderest}
		(1-\eps)\dist(x,y)^{1+\eps}\le \abs{u(x)-u(y)}\le (1+\eps)\dist(x,y)\, ;
		\end{equation}
		\item[iv)] $u(B_1(p))\supset B_{1-2\eps}^{\setR^{N}}(0)$;
		\item[v)] the following volume estimate holds: \begin{equation}\label{eq:sharpahlfors}
(1-\eps)\omega_{N}\le\haus^{N}(B_1(p))\le (1+\eps)\omega_{N}\, . 
\end{equation}
	\end{itemize}
 \end{itemize}
	\end{theorem}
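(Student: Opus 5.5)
The plan is to assemble the statement from three ingredients: the almost-rigidity theory of non-collapsed $\RCD$ spaces (volume convergence, almost volume cone implies almost metric cone), Cheeger--Colding's metric Reifenberg theorem, and the multi-scale analysis of $\delta$-splitting maps as in \cite{CJN:21, BrueNaberSemola20}.

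\textbf{Volume and topology.} First I would fix $\eps$ and choose $\delta$ small. Since $B_{16}(p)$ is $(N,\delta)$-symmetric, and tangent cones of non-collapsed spaces are cones over spaces of dimension $N-1$, the cone appearing in the definition must be $\setR^N$. Volume convergence for non-collapsed $\RCD$ spaces \cite{DPG} then gives the estimate \eqref{eq:sharpahlfors} of v) at scale $16$. Bishop--Gromov monotonicity propagates this: for every $y\in B_8(p)$ and $r\le 1$, the ratio $\haus^N(B_r(y))/\omega_N r^N$ is $\ge 1-\eps'$. By the ``almost maximal volume implies almost Euclidean'' theorem, every such ball $B_r(y)$ is $(N,\eps')$-symmetric. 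This uniform Reifenberg flatness at all points and scales is exactly the input of Cheeger--Colding's metric Reifenberg theorem \cite{CC:97}, as used in \cite{KapMonGT}. That theorem gives a bi-H\"older homeomorphism of $B_8(p)$ onto an open subset of $\setR^N$, which yields the first bullet.

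\textbf{The splitting map, ii) and iv).} After rescaling, Theorem \ref{splitting vs isometry}(i) produces an $\eps$-splitting map $u:B_8(p)\to\setR^N$, which is i). For ii) I would use a maximal function argument. Let
\[
Mf(x):=\sup_{r\le 1} r^2\fint_{B_r(x)}|\Hess u|^2\,\di\haus^N ,
\]
and let $U$ be the closed set of points of $B_1(p)$ where $Mf\le\eta$, for some small $\eta$. The weak-$(1,1)$ estimate for the maximal function, combined with (ii) of Definition \ref{def:deltasplitting}, bounds $\haus^N(B_1(p)\setminus U)$ by $C\delta/\eta\le\eps$.

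At points of $U$, the Hessian is small at every scale. By Poincaré inequalities and a telescoping argument, $\nabla u_i\cdot\nabla u_j$ then stays close to $\delta_{ij}$ at every scale. Hence $u$, after rescaling, is a splitting map on $B_r(x)$ for all $r\le1$. Theorem \ref{splitting vs isometry}(ii), applied at scale $r=\dist(x,y)$, makes $u$ an $\eps r$-GH isometry on $B_r(x)$, and this gives \eqref{eq:bilipest}.

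For iv), $u$ is an $\eps$-GH isometry onto $B_1^{\setR^N}(0)$ and continuous on a topological ball. A degree argument (or invariance of domain, combined with injectivity from iii) then gives $u(B_1(p))\supset B_{1-2\eps}(0)$.

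\textbf{The H\"older estimate iii), which I expect to be the main obstacle.} Two bounds are needed.

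\emph{Upper bound.} The Lipschitz bound $|u(x)-u(y)|\le(1+\eps)\dist(x,y)$ follows from the sharp gradient estimate $|\nabla u|\le 1+\eps$ on $B_1(p)$. This comes from Bochner's inequality applied to the subharmonic function $|\nabla u|^2$ (up to the $\delta$ error), together with the $L^1$ closeness in (iii) of Definition \ref{def:deltasplitting}.

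\emph{Lower bound.} Here I would use the transformation theorem of \cite{CJN:21, BrueNaberSemola20}. For every $x\in B_1(p)$ and every $r\le1$ there is a lower-triangular matrix $T_{x,r}$ such that $T_{x,r}u$ is an $\eps$-splitting map on $B_r(x)$, with
\[
|T_{x,r}|\le r^{-\eps}.
\]
The proof is by induction on scales. It uses that every ball is $(N,\eps')$-symmetric, so no degeneration can occur, and that $T_{x,r}$ changes by a factor at most $1+C\eps$ between consecutive dyadic scales.

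Applying Theorem \ref{splitting vs isometry}(ii) to $T_{x,r}u$ with $r=\dist(x,y)$ gives $|T_{x,r}(u(x)-u(y))|\ge(1-\eps)r$. Therefore
\[
|u(x)-u(y)|\ge (1-\eps)\,r^{1+\eps}.
\]
Establishing the transformation theorem, with its control of the growth of $T_{x,r}$, is the delicate part; the remaining steps are fairly direct consequences of results already stated.
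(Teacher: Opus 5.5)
Your outline follows the architecture of the results the paper cites instead of proving the statement: \cite{KapMonGT} for the topology, via almost rigidity and Cheeger--Colding's Reifenberg theorem, and \cite{BrueNaberSemola20}, after \cite{CJN:21}, for the map $u$. As sketches, the first bullet and i), iii), iv), v) are fine, and the transformation theorem is legitimately taken from \cite{BrueNaberSemola20}. The genuine gap is in ii).

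You take $U=\{Mf\le\eta\}$, where $Mf(x)=\sup_{r\le 1}r^2\fint_{B_r(x)}|\Hess u|^2$, and claim that telescoping keeps $\nabla u_i\cdot\nabla u_j$ close to $\delta_{ij}$ at every scale. A bound on the supremum over scales cannot give this. By Poincaré, the averaged Gram matrix $\fint_{B_r(x)}\nabla u_i\cdot\nabla u_j$ changes between consecutive dyadic scales by at most $Cr\fint_{B_{4r}(x)}|\Hess u|\le C\sqrt{\eta}$. Summed over the $\log_2(1/r)$ scales, this gives only $C\sqrt{\eta}\log_2(1/r)$.

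This drift really occurs, and it is why iii) carries the exponent $1+\eps$ and why the matrices $T_{x,r}$ are needed at all. Take the vertex $o$ of the cone over a round sphere $S^{N-1}_a$ with $a<1$ close to $1$. This is a non-collapsed $\RCD(0,N)$ space without boundary, $(N,\delta)$-symmetric at all scales. Consider the normalized homogeneous harmonic functions $u_i=\rho^{1+\gamma}\phi_i$, where $\phi_i$ are first eigenfunctions of the cross-section and $\gamma\to 0$ as $a\to 1$. They form a splitting map with parameter of order $\gamma$, and $|\Hess u|\lesssim\gamma\rho^{\gamma-1}$. Hence $Mf\lesssim\gamma$ on $B_1(o)$, while your measure bound forces $\eta\gtrsim\gamma/\eps$. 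So your $U$ is all of $B_1(o)$. Yet $|u(x)-u(o)|=c\,\dist(x,o)^{1+\gamma}$, and \eqref{eq:bilipest} fails near $o$. Property ii) still holds on this cone after removing a tiny ball around $o$, but not with your $U$.

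The fix is to control a quantity that is summable over scales. For example, take $h(x)=\sum_{\alpha\ge 0}r_\alpha\fint_{B_{4r_\alpha}(x)}|\Hess u|$ with $r_\alpha=2^{-\alpha}$. Fubini and doubling give $\int_{B_1(p)}h\le C\sum_\alpha r_\alpha\int_{B_5(p)}|\Hess u|\le C\sqrt{\delta}$.

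The convergent factor $\sum_\alpha r_\alpha$ is exactly what the maximal function discards: at small scales the Hessian energy is large only on a set of small measure. On $U=\{h\le\eta\}$ the telescoping sum is at most $C\eta$, uniformly in $r$, and Chebyshev gives $\haus^N(B_1(p)\setminus U)\le C\sqrt{\delta}/\eta$.

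The Hessian condition at each scale then follows from the gradient condition at the doubled scale, by the improved Bochner argument in the proof of Lemma \ref{lemma:splittingatsmallscale}. Only then is $u$ a splitting map at all scales around points of $U$, which gives \eqref{eq:bilipest}.
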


\subsection{Ambrosio-Kirchheim's metric currents} \label{SS:AKMetCur}
 Currents with finite mass in complete metric spaces were introduced by Ambrosio-Kirchheim \cite{AmbrosioKirchheim00}. Here we review, in a non-extensive way, some of their results that we will need. 

\subsubsection{Definition and basic properties}
Integral currents with finite mass in complete metric spaces were introduced by Ambrosio-Kirchheim \cite{AmbrosioKirchheim00}. The theory was extended to currents with non-finite mass and in non necessarily complete spaces by Lang \cite{Lang} and Lang-Wenger \cite{LangWenger}. 
Here we review, in a non-extensive way, some of their results.

\smallskip

Let $(X,\dist)$ be a complete metric space. 
We denote by $\Lip(X)$  the set of Lipschitz functions on $X$ and by
$\Lipb(X)$ the bounded ones.   For $n \geq 0$ an integer, let  
 \[
 \mathcal D^n(X)= \Lipb(X) \times [\Lip(X)]^{n}.
 \]
 Elements in 
 $\mathcal D^n(X)$ will usually be denoted as $(f, \pi)$, where the first entry is an element in  $\Lipb(X)$ and $\pi=(\pi_1,\dots, \pi_n) \in [\Lip(X)]^{n}$.  Sometimes $(f, \pi)$ will be denoted as 
 $f\,d\pi$ (to resemble an $n$-differential form).

An $n$-dimensional current $T$ on $X$ 
is a multilinear map $T: \mathcal D^n(X)  \to \mathbb R$  that satisfies:
\begin{itemize}
    \item $T$ is continuous: for any sequence $(f, \pi^i)  \in \mathcal D^n(X)$ such that $\pi^i$ converge pointwise to $\pi \in [\Lip(X)]^n$ and such that  $\sup_{i,j}\Lip(\pi^j_i) < \infty$, it holds 
    \[
    \lim_{i \to \infty}T(f, \pi^i)= T(f, \pi) ; 
    \]
    \item  $T$ is a local map:     
    $T(f, \pi)=0$, whenever there exists $j$ such that $\pi_j$ is constant on a neighborhood of $\{f \neq 0\}$;
    \item $T$ is bounded:
    there exists a 
    finite Borel measure $\mu$ on $X$ such that 
    \begin{equation}\label{eq-massMeas}
        |T(f, \pi)| \leq \Lip(\pi_1)\cdots \Lip(\pi_n) \int_X |f| \di \mu,
    \end{equation}
    for all $(f, \pi) \in \mathcal D^n(X)$.
\end{itemize}

 The vector space of all $n$-dimensional currents in $X$ is denoted by 
$\bM_n(X)$.

The smallest measure $\mu$ that 
satisfies \eqref{eq-massMeas}
is denoted by 
$||T||$. It is
called the mass measure of $T$. 
The mass and the support of $T$ are defined as $\mass(T)=||T||(X)$ and 
$\spt(T)= \spt(\norm{T})$.

When $(X,\dist)$ is the standard Euclidean space of dimension $n$, a prime example of an $n$-dimensional current is  
\[
[[\theta]]
(f, \pi)=  \int_{\setR^n} \theta(x) f(x)  \det( D_x\pi) \di\mathcal L^n(x),
\]
where  $\theta \in L^1(\setR^n, \mathbb R)$ and $D_x\pi$ denotes the derivative of $\pi$ at $x$, which exists $\mathcal L^n$-a.e. by Rademacher's theorem.

Given another complete metric space $Y$ and a Lipschitz function $\Psi: X \to Y$,
the \emph{push-forward} of $T$ under $\Psi$ is the current ${\Psi}_{\sharp} T: \mathcal D^n(Y)
\to \mathbb R$ defined as
\[
{\Psi}_{\sharp} T (f, \pi) 
= T( f\circ \Psi, \pi\circ\Psi), \qquad \forall(f, \pi) \in \mathcal D^n(Y).
\]
It holds that
\begin{equation}\label{eq-pushMass}
\|\Psi_\sharp T \| \leq \mathrm{Lip}(\Psi)^n\Psi_\sharp\|T\|.
\end{equation}

For $n \geq 1$, the  boundary of $T$ is the multilinear functional 
\[
\partial T: \Lipb(X) \times [\Lip(X)]^{n-1}  \to \mathbb R,
\]
defined as
\[
\partial T(f, \pi_1,...,\pi_{n-1}) = T(1, f, \pi_1,...,\pi_{n-1}),
\]
where $1:  X \to \mathbb R$  denotes the constant function equal to $1$.
We say that $T\in \bM_{n}(X)$ is a \emph{normal} current if $\partial T \in \bM_{n-1}(X)$. The vector space of all
normal $n$-currents on $X$ is denoted by $\bN_n(X)$. For $n=0$, we set $\bN_0(X)=\bM_0(X)$.

For any integer $k \leq n$ and $(g, \tau) \in \mathcal D^k(X)$, the \emph{restriction} of 
$T$ to $(g, \tau)$ is the current
\[
T \rstr (g, \tau):  
\mathcal D^{n-k}(X) \to \mathbb R
\]
given by $T(f, \pi) = T(fg, \tau, \pi)$. It holds that
\begin{equation}\label{eq-mass-Restr}
\norm{T \rstr (g, \tau_1,\cdots,\tau_k)}  \leq   \sup|g| \Lip(\tau_1)\cdots \Lip(\tau_k)\norm{T}.
\end{equation}

Currents $T \in \bM_n(X)$ can be naturally extended to $\mathcal B^\infty(X) \times [\Lip(X)]^n$, where $\mathcal B^\infty(X)$ denotes the class of bounded Borel functions on $X$. This extension is denoted in the same way and satisfies natural product and chain rules for derivatives, as well as continuity, locality and boundedness properties, see \cite[Theorem 3.5]{AmbrosioKirchheim00}.  In particular, for a
Borel set $A \subset X$, the restriction of $T$ to $A$, is the current $ T \rstr A: \mathcal D^n(X)  \to \mathbb R$
given by 
\[
 T   \rstr A ( f, \pi) = T( 1_A \, f, \pi),  \quad \forall (f, \pi) \in \mathcal D^n(X),
\]
where $1_A:  X \to \mathbb R$  denotes the indicator function of $A$. In this case $||T   \rstr A|| = ||T ||  \rstr A$.

\subsubsection{Integral currents, integral currents in Euclidean space and the degree of a Lipschitz map with respect to a current}

Let $n \geq 1$ be an integer, $ T \in \bM_n(X)$ is called \emph{rectifiable} if: 
\begin{itemize}
    \item $||T||$ is concentrated on an $\mathcal H^n$-rectifiable subset of $X$;
    \item $||T||$ vanishes on  $\mathcal H^n$-negligible sets. 
\end{itemize}
If additionally, for all Lipschitz maps $\Psi: X \to \mathbb R^n$ and all open sets $A \subset X$,
it holds 
\[
\Psi_{\sharp} ( T  \rstr A ) = 
\curr{\theta}  \quad \text{ for some } \quad \theta \in L^1(\mathbb R^n, \mathbb Z),
\]
we say that $T$ is \emph{integer rectifiable}. 
The collection of $n$-rectifiable currents is denoted by $\mathcal R_n(X)$. 
 The collection of $n$-integer rectifiable currents is denoted by $\mathcal I_n(X)$. Furthermore, the class of $n$-dimensional \emph{integral currents} is defined as 
\[
\bI_n(X) = \mathcal I_n(X) \cap \bN_n(X).
\]

The \emph{(canonical) set} of $T \in \mathcal R_n(X)$ defined by 
\begin{equation}\label{eq:defSet}
\set(T)=\Big \{ x \in X \,|\, \liminf_{r \to 0}\frac{ \norm{T}(B_r(x))}{\omega_n r^n} >0\Big \}
\end{equation}
is countably $\haus^n$-rectifiable and 
$\norm{T}$ is concentrated on $\set(T)$. Moreover, any Borel set on which $\norm{T}$ is concentrated must contain $\set(T)$,
up to an $\haus^n$-negligible set, see {\cite[Theorem 4.6]{AmbrosioKirchheim00}}.

We next recall a slight variant of the classical constancy theorem for currents in Euclidean spaces
{\cite[4.1.7]{Federer}}, whose corollary below will be crucial in the proof of the main results of this paper.

\begin{theorem}\label{thm:topCurrentsEuc}
Let $S \in \bN_n(\setR^n)$. Then   there exists a unique function $g \in \mathrm{BV}(\setR^n; \setR^n)$ that satisfies  
\[ 
S = [[ g ]].
\]
Moreover, 
\[
\norm{\partial S} = |Dg|,
\]
where $Dg$ is the derivative in the sense of distributions of $g$ and $|Dg|$ denotes its total 
variation. If $S \in \bI_n(\setR^n)$, then $g \in \mathrm{BV}(\setR^n)$  takes values in $\mathbb{Z}$.

If, in addition, $|| \partial S || (U)= 0$ for some connected open set $U \subset \mathbb R^n$, then $g$ is constant on  $U$.   
\end{theorem}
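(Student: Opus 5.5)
The plan is to reduce to the classical Euclidean theory by identifying the Ambrosio--Kirchheim current $S$ with a classical Federer current, and then to read off all four claims from properties of distributions with measure derivatives. Note that the function $g$ in the statement is scalar, i.e.\ $g\in \mathrm{BV}(\setR^n)$, since the current has top dimension.

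\emph{Step 1: $S$ is given by a finite measure.} For $f\in C^\infty_c(\setR^n)$ set
\[
\Lambda(f):=S(f,x_1,\dots,x_n),
\]
where $x_i$ are the coordinate functions. By \eqref{eq-massMeas} we have $|\Lambda(f)|\le \int|f|\di\norm{S}$. Hence $\Lambda$ is represented by a finite signed measure $\mu$, and $|\mu|\le \norm{S}$.

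\emph{Step 2: $\mu$ is a BV function $g$.} I use that $\partial S\in\bM_{n-1}(\setR^n)$. The chain rule for AK currents (\cite[Theorem 3.5]{AmbrosioKirchheim00}) gives
\[
\partial S(f,x_1,\dots,\widehat{x_i},\dots,x_n)=S(1,f,x_1,\dots,\widehat{x_i},\dots,x_n)=\pm\, S(\partial_i f,x_1,\dots,x_n).
\]
Combined with the mass bound for $\partial S$, this shows that each distributional derivative $\partial_i\mu$ is a finite measure, dominated by $\norm{\partial S}$.

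A finite measure whose gradient is a finite measure is an $L^1$ function. Indeed, mollify $\mu_\eps=\mu*\rho_\eps$. Then $\|\mu_\eps\|_{L^1}\le|\mu|(\setR^n)$ and $\|\nabla\mu_\eps\|_{L^1}\le|D\mu|(\setR^n)$. By Rellich compactness on balls and lower semicontinuity, the limit $\mu$ is absolutely continuous, so $\mu=g\,\Leb^n$ with $g\in \mathrm{BV}(\setR^n)$. The inequality $|Dg|\le\norm{\partial S}$ then comes for free.

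\emph{Step 3: $S=[[g]]$ on all of $\mathcal D^n(\setR^n)$.} First, for $f\in\Lipb$ and smooth $\pi$, I expand $S(f,\pi)$ using the chain rule, $d\pi_1\wedge\dots\wedge d\pi_n=\det(D\pi)\,dx_1\wedge\dots\wedge dx_n$. This gives $S(f,\pi)=S(f\det D\pi,x)=\int fg\det D\pi$, using the Borel extension of $S$ and an approximation of $f$.

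For general Lipschitz $\pi$, I take mollifications $\pi^\eps\to\pi$ pointwise with uniformly bounded Lipschitz constants. The continuity axiom handles the left-hand side. On the right-hand side, $\det D\pi^\eps\to\det D\pi$ only weakly$^*$ in $L^\infty$; this is the null-Lagrangian weak continuity of determinants, valid since $D\pi^\eps$ is bounded. This passage is the main technical obstacle, and the weak continuity of Jacobians is what I would invoke.

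Uniqueness of $g$ follows by testing against $\pi=x$. Then $\partial S=\partial[[g]]$, and the classical identity $\norm{\partial[[g]]}=|Dg|$ gives the required equality. For the reverse inequality one can directly check that $\partial[[g]]$ is bounded by $|Dg|$: its action on $(f,\pi)$ equals $\int g\, d(f\,d\pi)$, and by approximation this is at most $\prod\Lip(\pi_j)\int|f|\di|Dg|$.

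\emph{Step 4: integrality and constancy.} If $S\in\bI_n$, apply the definition of integer rectifiability with $\Psi=\mathrm{id}$ and $A=\setR^n$: this gives $S=[[\theta]]$ with $\theta$ integer-valued. Uniqueness forces $g=\theta$, so $g$ takes values in $\mathbb Z$.

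Finally, if $\norm{\partial S}(U)=0$, then $|Dg|(U)=0$. A distribution with vanishing gradient on a connected open set is constant there, so $g$ is a.e.\ constant on $U$.
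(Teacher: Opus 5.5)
Your proposal is correct. For the parts the paper actually argues, you do the same thing: integrality comes from the definition of $\mathcal I_n$ with $\Psi=\mathrm{id}$, $A=\setR^n$ plus uniqueness, and constancy on $U$ comes from the vanishing of the distributional gradient. The real difference is the first half. The paper does not prove existence, uniqueness or $\norm{\partial S}=|Dg|$; it cites \cite[Theorem 3.7]{AmbrosioKirchheim00} and extracts from that proof only the estimate \eqref{eq:BVestimate}, which is exactly your Step 2. Your Steps 1--3 reconstruct the Ambrosio--Kirchheim argument. The citation buys brevity. Your version is self-contained, and it makes explicit that the constancy claim needs only the one-sided domination $|\partial_i g|\le\norm{\partial S}$, not the full mass identity.

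Two details of the reconstruction should be tightened.

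First, the coordinatewise bounds of Step 2 give only $|Dg|\le\sqrt{n}\,\norm{\partial S}$, so $|Dg|\le\norm{\partial S}$ does not come entirely for free. Repeat the Step 2 computation in an arbitrary orthonormal frame $y=Rx$, whose coordinates are still $1$-Lipschitz. This gives $|Dg\cdot\nu|\le\norm{\partial S}$ for every unit vector $\nu$; then take the supremum over a countable dense set of directions. Combine this with your direct upper bound $\norm{\partial[[g]]}\le|Dg|$, which is Hadamard's inequality for $\det(\nu,\nabla\pi_1,\dots,\nabla\pi_{n-1})$. That two-sided check is what proves $\norm{\partial S}=|Dg|$. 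The appeal to a ``classical identity'' does not do this work, since the classical statement concerns Federer's mass and its agreement with the Ambrosio--Kirchheim mass would itself need an argument.

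Second, the passage to Lipschitz $\pi$ in Step 3 is not a real obstacle. For mollifications, $D\pi^\eps=(D\pi)*\rho_\eps\to D\pi$ holds $\Leb^n$-a.e. with uniform bounds. Hence $\det D\pi^\eps\to\det D\pi$ a.e., and dominated convergence against $fg\in L^1$ suffices; weak continuity of Jacobians is valid but unnecessary.
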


\begin{proof}
The existence and uniqueness of $g \in \mathrm{BV}(\setR^n)$ such that $S = [[g]]$, together with the identity $\norm{\partial S} = |Dg|$,
are established in \cite[Theorem 3.7]{AmbrosioKirchheim00}.
If $S \in \bI_n(\setR^n)$, by definition of integer rectifiable currents $g$ must take integer values almost everywhere.

The last statement holds as follows. Let $\mu := g,\Leb^n$.
In the proof of \cite[Theorem 3.7]{AmbrosioKirchheim00} 
it is shown that 
\begin{equation}\label{eq:BVestimate}
\left| \int_{\setR^n} \frac{\partial \phi} {\partial x_i}  \di
\mu \right|
\leq \int_{\setR^n} |\phi|  \di \norm{\partial S}  \qquad \forall \, \phi\in C_c^\infty(\setR^n),\,\, i=1,...,n.
\end{equation}
Hence, the distributional derivative $\partial_i g$ is a finite Radon measure whose total variation is controlled by $\norm{\partial S}$. Assuming that $\norm{\partial S}(U)=0$ with $U$ open and connected implies that 
the right-hand side of \eqref{eq:BVestimate} vanishes, and hence
 $g$ must be constant almost everywhere on $U$. By redefining $g$ on a negligible set we may assume that $g$ is constant everywhere on $U$.
\end{proof}

\begin{corollary}
    \label{cor-constant}
    Let $X$ be a complete metric space,  $T \in \bI_n(X)$ and $\Psi:X \to \setR^n$ be a Lipschitz map. Assume that  $|| \partial T|| (V)= 0$ for some connected open set $V \subset X$, that $\Psi$  restricted to $V$  is bijective into its image and $\Psi(V)$ is open. Then
    $\Psi_\#   T= [[g ]]$, where $g \in \mathrm{BV}(\setR^n)$ is integer valued and constant on $\Psi(V)$. 
\end{corollary}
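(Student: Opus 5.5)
The plan is to reduce to Theorem \ref{thm:topCurrentsEuc} applied to $S:=\Psi_\sharp T\in \bN_n(\setR^n)$, on the open set $U:=\Psi(V)$.

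First, I will check that $S$ is an integral current in $\setR^n$. Integer rectifiability is built into the definition of $\mathcal I_n(X)$: taking the open set $A=X$ gives $\Psi_\sharp T=\curr{\theta}$ with $\theta\in L^1(\setR^n,\mathbb Z)$. Normality follows from two facts:
\begin{itemize}
\item pushforward commutes with the boundary, $\partial\Psi_\sharp T=\Psi_\sharp\partial T$, directly from the definitions, since $\partial\Psi_\sharp T(f,\pi)=T(1,f\circ\Psi,\pi\circ\Psi)$;
\item the mass bound \eqref{eq-pushMass} gives $\mass(\Psi_\sharp\partial T)\le \Lip(\Psi)^{n-1}\mass(\partial T)<\infty$.
\end{itemize}
Hence $S\in\bI_n(\setR^n)$, and Theorem \ref{thm:topCurrentsEuc} yields a unique integer-valued $g\in\mathrm{BV}(\setR^n)$ with $S=\curr{g}$.

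Second, $U=\Psi(V)$ is open by assumption and connected, being the continuous image of the connected set $V$. By the last part of Theorem \ref{thm:topCurrentsEuc}, it then suffices to show $\norm{\partial S}(U)=0$. Using \eqref{eq-pushMass} for the $(n-1)$-current $\partial T$, I get
\[
\norm{\partial S}(U)\le \Lip(\Psi)^{n-1}\,\norm{\partial T}\big(\Psi^{-1}(\Psi(V))\big).
\]
Modifying $g$ on a null set then gives $g$ constant on all of $U$.

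The main obstacle is this last quantity. The hypotheses give only $\norm{\partial T}(V)=0$. Injectivity of $\Psi|_V$ does not prevent points outside $V$ from being mapped into $\Psi(V)$, so $\Psi^{-1}(\Psi(V))$ may be strictly larger than $V$. I would handle this in one of two ways:
\begin{itemize}
\item Assume, as will be the case in the applications via (ETR), that $\norm{\partial T}$ gives no mass to $\Psi^{-1}(\Psi(V))\setminus V$. Most simply, this holds when $\Psi^{-1}(\Psi(V))=V$, i.e.\ when $V$ is the full preimage of the chart image.
\item Alternatively, localize first. Replace $\Psi$ by a Lipschitz map that agrees with $\Psi$ on $V$ and sends $X\setminus V'$ far away from $\Psi(V)$, for a slightly smaller $V'$, or replace $T$ by a restriction that still has no boundary over $\Psi(V)$.
\end{itemize}
With either of these in place, the displayed bound vanishes and the corollary follows from Theorem \ref{thm:topCurrentsEuc}. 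Without such a localization the statement should be read with this extra condition, since another piece of $T$ lying outside $V$ and carrying boundary could be mapped onto $\Psi(V)$.
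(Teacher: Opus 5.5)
Your reduction is the paper's: push $T$ forward to $\setR^n$, commute push-forward with boundary, bound $\norm{\partial\Psi_\sharp T}$ via \eqref{eq-pushMass}, and apply the last part of Theorem~\ref{thm:topCurrentsEuc} on the connected open set $\Psi(V)$. The obstacle you isolate is genuine, and it is a gap in the paper's proof, not in yours. The paper writes $\norm{\partial(\Psi_\sharp T)}(\Psi(V))\le\Lip(\Psi)^{n-1}\norm{\partial T}(V)$, which tacitly replaces $\Psi^{-1}(\Psi(V))$ by $V$.

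In fact the corollary is false as stated. Take $X=\setR$, $n=1$, $T=\curr{1_{[0,2]}}$ (so $\partial T=\delta_2-\delta_0$) and $V=(0,1)$. Let $\Psi$ be the $1$-Lipschitz map with $\Psi(t)=t$ for $t\le 1$, $\Psi(t)=2-t$ for $1\le t\le 3/2$, and $\Psi(t)=1/2$ for $t\ge 3/2$. All hypotheses hold: $\Psi|_V$ is the identity onto the open interval $(0,1)$, and $\norm{\partial T}(V)=0$. Yet a direct computation gives $\Psi_\sharp T=\curr{1_{[0,1/2]}}$, which is not constant on $\Psi(V)$. The boundary point $2\notin V$ lands at $1/2\in\Psi(V)$, so the left side of the paper's inequality equals $1$ and the right side equals $0$. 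Your first fix, $\norm{\partial T}\bigl(\Psi^{-1}(\Psi(V))\setminus V\bigr)=0$, is precisely the hypothesis under which the argument is valid.

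Contrary to your parenthetical remark, this condition is not automatic in the paper's application. In Lemma~\ref{lem:loc-constant} the chart $u$ is defined only on $B_r(p)$ and must be extended to $X$. Parts of $S$ outside the ball that carry boundary may then be mapped into $u(B_r(p))$.

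What works there is your restriction option. For a.e.\ $\rho<r$, the current $S\rstr B_\rho(p)$ is integral. Because $\norm{\partial S}(B_r(p))=0$, its boundary is the slice $\langle S,\dist(p,\cdot),\rho\rangle$, which is concentrated on $\{\dist(p,\cdot)=\rho\}$. By injectivity of $u$ on $B_r(p)$, this set is mapped off $u(B_\rho(p))$. Moreover, the push-forward of $S\rstr B_\rho(p)$ does not depend on how $u$ is extended.

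Your map-modification option, by contrast, is inconsistent as written: a map agreeing with $\Psi$ on $V\supset V\setminus V'$ cannot send $X\setminus V'$ far away. Any corrected version describes a different push-forward than $\Psi_\sharp T$. This matters for the application. On a closed oriented $N$-manifold, every Lipschitz map to $\setR^N$ pushes the fundamental class to $0$. So the local multiplicity is recovered only by localizing the current, not by altering the map.
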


\begin{proof}
We know that $\Psi_\# T  \in \bI_n(\mathbb R^n)$, thus, we can write  $\Psi_\# T $ as
$\Psi_\#   T= [[g ]]$, where $g \in \mathrm{BV}(\setR^n; \mathbb{Z})$ is as in Theorem \ref{thm:topCurrentsEuc}. 
From the fact that the push-forward commutes with the boundary operator, we get that 
\[
\norm{\partial( \Psi_\sharp T )}= \norm{ \Psi_\sharp (\partial T) } \leq \Lip( \Psi)^{n-1}  \Psi_\sharp \norm{\partial T}.
\] 
Hence, 
$\norm{\partial( \Psi_\sharp T )} (\Psi(V)) \leq \Lip( \Psi)^{n-1}  \norm{\partial T}(V)=0$.
By the last part of Theorem \ref{thm:topCurrentsEuc},  it follows that $g$ is a constant function on $\Psi(V)$. 
\end{proof}

\subsubsection{Parametric representation of rectifiable currents}
In this section,  we define  \emph{charts, atlases and weights} for rectifiable currents. We represent a rectifiable current and its mass measure in terms of these objects and note the relation between the degree as in Definition \ref{def-degree} and weight of a current. 

\begin{definition}\label{def-charts}
Let $T \in \bM_n(X)$. Let $f_i: K_i \to X$, $i\in \setN$, be 
a countable collection of maps that are biLipschitz to its images with the following properties:
\begin{itemize}
\item Each subset $K_i\subset \mathbb R^n$ is compact;
\item  $f_i(K_i)$, $i\in \setN$, are pairwise disjoint;
\item $\haus^n(\set(T)\setminus \bigcup_{i} f_i(K_i))=0$.
\end{itemize}
We say that each $(f_i,K_i)$ is a \emph{chart} for $\set(T)$ and the collection of charts is called an \emph{atlas} for $\set(T)$.
\end{definition}

We note that by  \cite[Lemma 4.1]{AmbrosioKirchheim00}, for any $T \in \mathcal R_n(X)$, there always exists an atlas for
$\set(T)$.

\begin{theorem}[{\cite[Theorem 4.5]{AmbrosioKirchheim00}}]\label{thm:parametricRep}
Let $T \in \mathcal R_n(X)$ and let $(f_i, K_i)_i$ be an atlas for $\set(T)$. 
Then there exist 
functions $\theta_i \in L^1(\setR^n)$  with $\spt(\theta_i) \subset K_i$  such that 
\begin{equation}\label{eq:Tsum}
T = \sum_{i}f_{i\sharp} [[\theta_i]]   \qquad \text{and} \qquad \mass(T)=\sum_i\mass(f_{i\sharp} [[\theta_i]]).
\end{equation}
Conversely, any current $T \in \bM_n(X)$ that can be written as in \eqref{eq:Tsum} is an element of $\mathcal R_n(X)$.   Additionally, $T\in \mathcal I_n(X)$ if and only if the $\theta_i$'s are integer valued. 
\end{theorem}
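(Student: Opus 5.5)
The plan is to cut $T$ along the disjoint pieces $f_i(K_i)$, pull each piece back to $\setR^n$ with a Lipschitz inverse of $f_i$, identify the resulting top-dimensional Euclidean current as some $[[\theta_i]]$, and push it forward again.

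\textbf{Splitting $T$.} Since $T\in\mathcal R_n(X)$, the measure $\norm{T}$ is concentrated on $\set(T)$ and vanishes on $\haus^n$-null sets. The atlas covers $\set(T)$ up to an $\haus^n$-null set, so $\norm{T}$ is concentrated on the disjoint union $\bigcup_i f_i(K_i)$. Set $T_i:=T\rstr f_i(K_i)$. Then $\norm{T_i}=\norm{T}\rstr f_i(K_i)$, and by countable additivity $T=\sum_i T_i$, with the series converging in mass, and
\[
\mass(T)=\sum_i\mass(T_i).
\]
It therefore suffices to show that $T_i=f_{i\sharp}[[\theta_i]]$ for some $\theta_i\in L^1(\setR^n)$ with $\spt\theta_i\subset K_i$; the mass identity in \eqref{eq:Tsum} then follows from the one above.

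\textbf{Pulling back to $\setR^n$.} Let $g_i:=f_i^{-1}\colon f_i(K_i)\to K_i$, which is Lipschitz. Extend it componentwise by McShane to a Lipschitz map $g_i\colon X\to\setR^n$ (losing at most a factor $\sqrt n$ in the constant). Put $S_i:=g_{i\sharp}T_i\in\bM_n(\setR^n)$. By \eqref{eq-pushMass},
\[
\norm{S_i}\le \Lip(g_i)^n\, g_{i\sharp}\norm{T_i}.
\]
The right-hand side is concentrated on $K_i$ and is absolutely continuous with respect to $\Leb^n$, because $\norm{T_i}\ll\haus^n$ and Lipschitz maps send $\haus^n$-null sets to $\Leb^n$-null sets.

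\textbf{The key Euclidean step (main obstacle).} We must show that an $n$-current $S$ in $\setR^n$ with $\norm{S}\ll\Leb^n$ is of the form $[[\theta]]$. I would proceed as follows.
\begin{enumerate}
\item For smooth $\pi$, use the chain rule and locality of metric currents, extended to bounded Borel functions as in \cite[Theorem 3.5]{AmbrosioKirchheim00}, to obtain
\[
S(f,\pi)=S\bigl(f\det D\pi,\;x_1,\dots,x_n\bigr).
\]
\item Note that $f\mapsto S(f,x_1,\dots,x_n)$ is a signed measure dominated by $\norm{S}$. Let $\theta$ be its density with respect to $\Leb^n$; then $\theta\in L^1$ and $\spt\theta\subset K_i$.
\item Pass from smooth to Lipschitz $\pi$ by mollification. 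The continuity axiom gives convergence of the left-hand side. Since $D\pi^\eps\to D\pi$ $\Leb^n$-a.e. and $\norm{S}\ll\Leb^n$, the determinants converge $\norm{S}$-a.e., and dominated convergence handles the right-hand side.
\end{enumerate}
This is exactly where absolute continuity is essential.

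\textbf{Pushing forward again.} Since $\norm{[[\theta_i]]}$ is concentrated on the compact set $K_i$, the push-forward under $f_i$ (Lipschitz on $K_i$) is well defined. We get
\[
f_{i\sharp}[[\theta_i]]=(f_i\circ g_i)_\sharp T_i=T_i,
\]
because $f_i\circ g_i=\mathrm{id}$ on $f_i(K_i)$, where $\norm{T_i}$ is concentrated, and push-forwards only depend on the map on a set carrying the mass.

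\textbf{Converse.} Suppose $T$ is given by \eqref{eq:Tsum}. Each term has mass at most $\Lip(f_i)^n f_{i\sharp}(|\theta_i|\Leb^n)$. This measure is concentrated on the rectifiable set $f_i(K_i)$ and is $\ll\haus^n$, because $f_i$ is biLipschitz. Since $\norm{T}\le\sum_i\norm{f_{i\sharp}[[\theta_i]]}$, both defining properties of rectifiability pass to the sum.

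\textbf{Integer multiplicities.} If all $\theta_i$ are integer valued, then for every Lipschitz $\Psi\colon X\to\setR^n$ and open $A\subset X$, the current $\Psi_\sharp(T\rstr A)$ is a mass-convergent sum of push-forwards of integer currents $[[\theta_i 1_{f_i^{-1}(A)}]]$ under the Lipschitz maps $\Psi\circ f_i$. The area formula shows each of these is $[[\cdot]]$ of an integer-valued function, and an $L^1$-convergent sum of integer-valued functions is integer valued.

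Conversely, suppose $T\in\mathcal I_n(X)$. Choose open sets $A_k\downarrow f_i(K_i)$ with $\norm{T}(A_k\setminus f_i(K_i))\to0$, possible by outer regularity of the finite Borel measure $\norm{T}$ and the closedness of the compact set $f_i(K_i)$. Then
\[
g_{i\sharp}(T\rstr A_k)\to S_i=[[\theta_i]]
\]
in mass. Each $g_{i\sharp}(T\rstr A_k)$ has an integer-valued density, and integer-valued $L^1$ functions form a closed set, so $\theta_i$ is integer valued.
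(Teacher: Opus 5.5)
Your proof is correct and follows essentially the argument of \cite[Theorem 4.5]{AmbrosioKirchheim00}; the paper cites this result rather than proving it. The shared steps are: restrict $T$ to the disjoint chart images, pull back by a Lipschitz extension of $f_i^{-1}$, identify the resulting absolutely continuous top-dimensional Euclidean current as $[[\theta_i]]$ via the chain rule and mollification, push forward again, and use the area formula together with $L^1$-closedness of integer-valued functions for the integrality claim. One small fix: absolute continuity of $g_{i\sharp}\norm{T_i}$ follows because $g_i^{-1}(Z)\cap f_i(K_i)=f_i(Z\cap K_i)$ and the Lipschitz map $f_i$ sends $\Leb^n$-null sets to $\haus^n$-null sets; it does not follow from the direction you stated, which concerns images rather than preimages.
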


\begin{definition}\label{def:theta_T}
Let $T \in \mathcal R_n(X)$. Using the notation of Theorem \ref{thm:parametricRep}, the function
\[
\theta_T :  \set(T)  \to \setR \setminus \{0\}  \qquad \text{given by} \qquad \theta_T= \sum_i \theta_i \circ f_i^{-1}
\]
is a Borel function such that 
$\int_{\set(T)}|\theta| \di \haus^n < \infty$, and is called the \emph{weight} (or multiplicity) function of $T$.
\end{definition}

More explicitly, given an atlas for $\set(T)$, the  $\theta_i$'s that appear in \eqref{eq:Tsum} are obtained using the definition of 
rectifiable current; that is, for each $i\in \setN$ there exists $\theta_i \in L^1(\mathbb R^n)$
such that $(f_i^{-1})_\sharp T=[[\theta_i]]$.

\begin{remark}\label{rmrk-pushforwardweights}
Within the proof of Theorem
\ref{thm:parametricRep},  as a consequence of the change of variables formula, it was shown that  for any open set $V \subset X$ and $\varphi:X \to \setR^n$ Lipschitz: 
$$\varphi_\sharp \left[   (f_{i\sharp} [[\theta_i]]  )\rstr V \right]
=   \Big[\Big[  \sum_{b  \in h^{-1}(a)\cap f_i^{-1}(V)} \theta_i(b) \, \sgn(\det D_bh) \Big]\Big],$$
where $h= \varphi \circ f_i : K_i  \cap f_i^{-1}(V) \to \varphi( f_i(K_i) \cap V)$. 
 This implies that for  $\theta_i$ integer valued, 
 $\varphi_\sharp[(f_{i\sharp} [[\theta_i]]  )\rstr V]$
 is an integer rectifiable current.

If $\varphi$ is bijective then the formula above reduces to 
\begin{equation}\label{eq-weightofbilip}
\varphi_\sharp \left[   (f_{i\sharp} [[\theta_i]]  )\rstr V \right]
=   [[ \theta_i \circ (\varphi \circ f_i)^{-1} \sgn(\det D(\varphi \circ f_i)) ]].
\end{equation}
\end{remark}

\begin{remark}
Notice that by Remark \ref{rmrk-pushforwardweights}, 
Definition \ref{def:theta_T} is well posed in the following sense:
For any other collection of functions $g_i: Q_i\subset \mathbb R^n \to X$ satisfying the same conditions as the $f_i$'s with corresponding weight functions $\tilde \theta_i$,  it holds 
\[
|\tilde \theta_j(g_j^{-1}(x)) |= | \theta_i(f_i^{-1}(x))|, \qquad \text{for $\haus^n$-- a.e. $x \in f_i(K_i) \cap g_j(Q_j)$},
\]
in other words, the multiplicity function is well defined up to a sign.
\end{remark}

The mass measure of a rectifiable current 
$T \in \mathcal R_n(X)$ has the following representation formula
\begin{equation}\label{eq-TmeasRep}
\norm{T}= \lambda \theta_T \haus^n \rstr \set(T),
\end{equation}
where
$\lambda: \set(T) \to [n^{-n/2},2^n/\omega_n]$ 
is a function such that $\lambda(x)$ equals the 
area factor of 
the vector space $\Tan^{(n)}(\set(T), x)$. See Section 9
in \cite{AmbrosioKirchheim00}, in particular, 
{\cite[Theorem 9.5 and (9.11) ]{AmbrosioKirchheim00}}.

\subsubsection{$0$-dimensional slices and weight functions}

We now state some results concerning the relationship between the mass
of $0$-slices of integral currents and its weight function. This relationship -- see in particular equations \eqref{eq-massRestr-Slice} and \eqref{eq-zeroIntcur} below -- will be useful when proving that the weight function of a given orientation of an $\RCD$ space, in the sense of currents, is constant with absolute value equal to $1$ (see the proof of Proposition \ref{prop:locrep}).

First recall that a
$0$-dimensional integral current  $T: \mathcal D^0(X) \to \mathbb R$ can be written as a finite linear combination of Dirac masses,
\begin{equation}\label{eq-0IntCurrRep}
     T=\sum_{i=1}^N  a_{p_i}  \delta_{p_i},
\end{equation}
 where $p_1, \ldots, p_N \in X$ is a finite family of points and 
 $a_{p_1}, \ldots, a_{p_N}$
 are  non-zero integers;  we set $\delta_{p}(f)=f(p)$, for any $f \in \mathcal D^0(X)$. With a slight abuse of notation, below we will also denote with $\delta_{p}$ the corresponding Radon measure, given by duality: 
 $$\delta_p(E)=\begin{cases} 0\;  \text{ if } p\notin E \\1 \; \text{ if } p\in E \end{cases}\quad  \text{ for all } E\subset X \text{ Borel}. $$

\begin{definition}
For  $T \in \bI_n(X)$ and any Lipschitz map $\Phi: X \to \setR^n$,  the 
$0$-slices of $T$ under $\Phi$, denoted by
\[
\langle T, \Phi, a \rangle \in I_0(X), \qquad \Leb^n\text{-- a.e. } a \in \setR^n,
\]
are characterized by the equality
\begin{equation*}
\int_{\setR^n}   \psi(a)  \langle T,  \Phi, a \rangle\, \di\Leb^n(a)  = T \llcorner (\psi \circ \Phi) \, d \Phi, \qquad \text{for all } \psi \in C_c(\setR^n).
\end{equation*}
\end{definition}

We now collect some of the properties satisfied by  $0$-slices.

\begin{proposition}\label{prop-propofSlices}
    Let $T \in \bI_n(X)$, $L$ be a $\sigma$-compact set where $T$ and $\partial T$
    are concentrated and 
    $\Phi: X \to \setR^n$ a Lipschitz map. Then  
\begin{enumerate}
\item $\langle T, \Phi , a \rangle$ is concentrated on $\Phi^{-1}(a) \cap L$;
\item $ \langle T  \rstr A, \Phi, a \rangle = \langle T , \Phi, a \rangle  \rstr A$
for any Borel set $A \subset X$;
\item For $\Leb^n$-a.e.\ $a \in \setR^n$, we have
$\langle \Phi_\# T, \mathrm{Id}, a \rangle  = \Phi_\# \langle T, \Phi, a \rangle$;
\item We have  
\begin{equation}\label{eq-massRestr-Slice}
\|T \rstr d\Phi \|  =   \int_{\mathbb R^n}  \norm{\langle T, \Phi, a \rangle }\,   \di \Leb^n(a).
\end{equation}
\item If
 $\Phi^{-1}(a)\cap \set(T)$ contains at most finitely many points, then
 \begin{equation*}
\langle T, \Phi, a \rangle = \sum_{p \in \Phi^{-1}(a)\cap \set(T)} \sigma_a(p) \theta_T(p) \delta_p, 
\end{equation*} 
where $\theta_T: \set(T) \to \setR$ is the weight function of $T$, as in Definition \ref{def:theta_T}, and  $\sigma_a(p) \in \{ - 1, 1 \}$. In particular, if  $\Phi^{-1}(a)\cap \set(T)= \{p\}$, then
\begin{align}\label{eq-zeroIntcur}
\langle T, \Phi, a \rangle (1)  &=   \sigma_a(p) \theta_T(p), \notag \\
\norm{\langle T, \Phi, a \rangle}  = &\abs{\langle T, \Phi, a \rangle (1)} \delta_p=  \abs{\theta_T(p)} \delta_p,
\end{align} 
where $1$ denotes the constant function equal to $1$.
\end{enumerate}
\end{proposition}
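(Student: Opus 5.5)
The five items are the standard Ambrosio--Kirchheim slicing properties, so the plan is to derive each from the defining identity of the slices together with the parametric representation of Theorem \ref{thm:parametricRep}, reducing to Euclidean space where possible.

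\textbf{Items (1) and (2).} Localization (1) is obtained as in \cite[Theorem 5.6]{AmbrosioKirchheim00}. Test the defining identity with $f\cdot(\psi\circ\Phi)$ in place of $\psi\circ\Phi$, where $f$ vanishes on $\Phi^{-1}(a)\cap L$ for $a$ in a small ball. Then let $\psi$ concentrate at $a$ and use the mass bound \eqref{eq-mass-Restr}. Property (2) follows directly from the definition. For $A$ Borel and $\psi\in C_c$, we have $(T\rstr A)\rstr(\psi\circ\Phi)\,d\Phi=(T\rstr(\psi\circ\Phi)\,d\Phi)\rstr A$. Integrating the slices against $\psi$ and using uniqueness of slices for a.e.\ $a$ gives the claim.

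\textbf{Item (3).} Push forward the defining identity. Since $\Phi_\#(T\rstr(\psi\circ\Phi)\,d\Phi)=(\Phi_\#T)\rstr\psi\, d\,\mathrm{Id}$, both families of slices integrate to the same current against every $\psi$. A.e.\ uniqueness of slices then concludes.

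\textbf{Item (4).} This is the coarea-type formula \cite[Theorem 5.7]{AmbrosioKirchheim00}. The inequality $\le$ comes from the defining identity and lower semicontinuity of mass. The reverse inequality follows by writing $T$ in charts via Theorem \ref{thm:parametricRep}, where the formula becomes the Euclidean area/coarea formula applied to $\Phi\circ f_i$.

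\textbf{Item (5).} This is the step needing care: the coefficients of a $0$-dimensional integral slice must be identified with $\pm\theta_T$. Take an atlas $(f_i,K_i)$ for $\set(T)$ and write $T=\sum_i f_{i\sharp}[[\theta_i]]$. By (2) and linearity, the slice is the sum of the slices of the pieces $f_{i\sharp}[[\theta_i]]$.

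For one piece, use (3) together with Remark \ref{rmrk-pushforwardweights}. Applied to $h=\Phi\circ f_i$, it expresses $\Phi_\#(T\rstr V)$, for a small neighbourhood $V$ of an isolated point $p\in\Phi^{-1}(a)\cap\set(T)$, as $[[\sum_b\theta_i(b)\,\sgn\det D_bh]]$. This gives coefficient $\sgn(\det D h)\,\theta_T(p)$ at $p$, for a.e.\ $a$ where $h$ is differentiable with invertible differential. Points where $\det Dh=0$ contribute only a null set of $a$ by the area formula.

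Finiteness of $\Phi^{-1}(a)\cap\set(T)$ lets us separate the points by disjoint neighbourhoods and apply (2) on each. Setting $\sigma_a(p)=\sgn\det D h$ gives the first formula. For a single point $p$ the slice is $\sigma_a(p)\theta_T(p)\delta_p$. Evaluating at $1$ and taking mass yields \eqref{eq-zeroIntcur}.

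The main obstacle is making item (5) precise only for a.e.\ $a$. This requires checking that the exceptional sets — non-differentiability, degenerate Jacobian, and points outside the atlas images — project to $\Leb^n$-null sets. The area formula handles this.
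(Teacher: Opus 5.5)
For items (1)--(4) you do what the paper does and defer to \cite{AmbrosioKirchheim00}. Keep relying on that citation for the inequality $\int\norm{\langle T,\Phi,a\rangle}\,\di\Leb^n(a)\le\norm{T\rstr d\Phi}$ in (4). A chart-based proof of it needs the slice structure you only obtain in (5), and your (5) in turn needs this inequality (see below), so deriving it that way would be circular.

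Your route for item (5) is correct in outline but genuinely different from the paper's. The paper embeds $X$ isometrically into $\ell^\infty$, a $w^*$-separable dual space (Kuratowski), and invokes \cite[Theorem 9.7]{AmbrosioKirchheim00}. That result gives $\langle T,\Phi,a\rangle=[[\set(T)\cap\Phi^{-1}(a),\theta_T,\sigma_a]]$ for a.e.\ $a$ in one stroke, so a finite fibre immediately yields a finite sum of Dirac masses. You stay inside the parametric representation: an atlas, localization via (2), push-forward via (3), and the formula of Remark \ref{rmrk-pushforwardweights}. This matches the paper's stated choice to work only with sums of push-forwards. It also avoids the embedding and the orientation machinery of Section 9 of Ambrosio--Kirchheim, and it gives the explicit sign $\sigma_a(p)=\sgn\det D(\Phi\circ f_i)$ at $f_i^{-1}(p)$.

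The price of your route is bookkeeping that the citation hides, and three points need tightening.
\begin{enumerate}
\item Your neighbourhood $V$ is chosen after $a$, while (2) and (3) applied to $T\rstr V$ hold only off a null set of $a$ that depends on $V$. Either take $V$ from a fixed countable base of $\overline{L}$, or, more cleanly, split each $K_i$ (up to a null set) into countably many Borel pieces $E_{ij}$ on which $\Phi\circ f_i$ is injective with non-vanishing Jacobian. Then slice the fixed countable family $T\rstr f_i(E_{ij})$.
\item $T\rstr V$ is in general not normal, so (3) cannot be quoted for it as stated for $\bI_n$. You must rederive it from the defining identity of slices.
\item You need that for a.e.\ $a$ the slice charges only $\set(T)\cap\bigcup_i f_i(K_i)$. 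This follows from (4) together with \eqref{eq-mass-Restr}, applied to the $\norm{T}$-null complement of that set; (1) alone only gives concentration on $\Phi^{-1}(a)\cap L$.
\end{enumerate}
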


\begin{proof}
All of the properties are proven in \cite{AmbrosioKirchheim00}. More precisely:
\begin{enumerate}
\item is proven in Theorem 5.6, Eq.\ (5.8).
\item is proven in Theorem 5.7.
\item is proven in Eq. (5.18).
\item is proven in Theorem 5.6, Eq.\ (5.9).
\item  follows from the Kuratowski embedding theorem, the expression for 0-integral currents \eqref{eq-0IntCurrRep} combined with {\cite[Theorem 9.7]{AmbrosioKirchheim00}}, where it is shown that for a $w^*$-separable dual space $X$, for $\Leb^n$-a.e. $a \in \setR^n$,
there exists an orientation   $\sigma_a$ of  $\set(T) \cap  \Phi^{-1}(a)$
such that the $0$-slices  $\langle T, \Phi, a \rangle$ satisfy the identity 
\[
\langle T, \Phi, a \rangle=[[ \set(T) \cap  \Phi^{-1}(a), \theta_T, \sigma_a]].
\]
\end{enumerate}
\end{proof}

\subsection{Intrinsic Flat and Gromov-Hausdorff convergence}

Let $(X,\dist)$ be a complete metric space and  $T_1, T_2 \in \bI_{n}(X)$. The \emph{flat distance} between $T_1$ and $T_2$ is defined as 
\[
\dist_{F}^X( T_1, T_2)=\inf  \left\{  \mass(U)+ \mass(V) \, : \,  U \in \bI_{n}(X), \,\,\,  V   \in  \bI_{n+1} (X),\,\, T_2 -T_1=  U + \partial V       \right\}.
\]

If $T_i \to T$  in flat sense to  $T \in \bI_{n}(X)$, then
\begin{itemize}
\item  $\lim_{i\to\infty} \dist_{F}^X( \partial T_i , \partial T)=0$;
\item $\mass(T)\le \liminf_{i\to\infty}\mass(T_i)$;
\item If $\lim_{i \to 0}\mass(T_i)=0$, then $T=0$. 
\end{itemize}

\begin{theorem}[Wenger  \cite{Wenger}]\label{thm-WengerCpctness}
Let $(X_i,\dist_i)$ be a sequence of complete metric spaces and  $T_i \in \bI_{n}(X_i)$.   If 
 $$\sup_{i \in \mathbb N} \Big  \{ ||T_i || (X_i)  +   ||\partial T_i|| (X_i) \Big  \}  < \infty,$$
 then there exist a complete metric space $W$, a subsequence $(i_j)_{j \in \mathbb N}$ and
isometric embeddings 
\[
\varphi_j: X_{i_j} \to W,  \qquad j \in \mathbb N,
\]  
such that 
 \[
 \dist_{F}^W(\varphi_{j \sharp} T_{i_j} , T) \to 0,
 \]
for some $T \in \bI_{n}(W)$.
\end{theorem}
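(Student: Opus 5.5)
The plan is Wenger's strategy: reduce to the Ambrosio--Kirchheim compactness theorem in a \emph{fixed compact} metric space by showing that, up to a flat-small error, the currents $T_i$ are carried by sets that are uniformly totally bounded. First, embed each $X_i$ isometrically into the injective space $\ell^\infty(X_i)$. Isometric embeddings preserve mass, boundary and integrality, and in an injective space we have cones and the isoperimetric and filling inequalities for integral currents, $\mass(S)\le C(n)\mass(\partial S)^{(n+1)/n}$. So it suffices to work there. We set $M:=\sup_i(\mass(T_i)+\mass(\partial T_i))$.

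The core step is a thick--thin decomposition. For every $\eps>0$ and every $i$, we want a splitting $T_i=R_i+\partial S_i$, with $R_i\in\bI_n$ and $S_i\in\bI_{n+1}$, such that:
\begin{itemize}
\item $\mass(S_i)\le\eps$;
\item $\mass(R_i)+\mass(\partial R_i)\le C(M)$;
\item $\spt R_i$ can be covered by $N(\eps,M)$ balls of radius $\eps$, where $N(\eps,M)$ does not depend on $i$.
\end{itemize}

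To build it, we repeatedly cut away pieces where $\norm{T_i}$ has low density at some scale. By the slicing inequality (the integral form of \eqref{eq-massRestr-Slice} applied to the distance function), for a.e.\ radius $r$ the slice $\partial(T_i\rstr B_r(x))-(\partial T_i)\rstr B_r(x)$ has mass at most $\frac{\di}{\di r}\norm{T_i}(B_r(x))$. A low-density ball can therefore be replaced, through the isoperimetric filling, by a filling of small mass. What remains is a current all of whose support points have lower mass density bounded below, say $\norm{R_i}(B_r(x))\ge c\,r^n$ for $r\le\eps$. The mass bound then bounds the number of disjoint $\eps$-balls centred on $\spt R_i$, which gives the uniform covering number. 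Summing the discarded fillings with a geometric choice of scales keeps $\mass(S_i)\le\eps$. I expect this decomposition, with all constants uniform in $i$, to be the main obstacle.

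Once the decomposition is in hand, Gromov's compactness theorem for uniformly totally bounded families gives, for the compact sets $\overline{\spt R_i}$, isometric embeddings into a single compact metric space $Z_\eps$. There the Ambrosio--Kirchheim closure and compactness theorem yields a subsequence with $R_i\to R^\eps$ weakly, for some $R^\eps\in\bI_n(Z_\eps)$. In a compact space with equibounded mass and boundary mass, weak convergence upgrades to flat convergence (Wenger's equivalence), so $\dist_F(R_i,R^\eps)\to0$. Consequently $\limsup\dist_F(T_i,T_j)\le 2\eps$ along this subsequence.

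Finally, we run a diagonal argument over $\eps=2^{-k}$ and glue the spaces $Z_{2^{-k}}$ together with the $\ell^\infty(X_i)$ along the embeddings, taking an injective hull or a completed disjoint union with an amalgamated metric. This produces one complete space $W$ and embeddings $\varphi_j$ under which $\varphi_{j\sharp}T_{i_j}$ is Cauchy in $\dist_F^W$. Since $\bI_n(W)$ with mass and boundary mass bounded is complete for the flat distance (by lower semicontinuity of mass and closure), the sequence converges to some $T\in\bI_n(W)$.
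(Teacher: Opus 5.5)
The paper gives no proof of this statement; it quotes it from Wenger. As transcribed, though, it omits a hypothesis of Wenger's theorem: a uniform diameter bound $\sup_i \mathrm{diam}(\spt T_i)<\infty$. The paper only applies the result in Theorem \ref{thm-mainD}, where $\sup_i \mathrm{diam}(X_i)<\infty$ is assumed, and the unbounded case is exactly why Theorem \ref{thm-main} switches to Lang--Wenger's local flat topology. Without the diameter bound the statement is false, so no argument can prove it as written.

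Take $X_i=\setR^n$ and $T_i=\curr{1_{B_1(0)}}+\curr{1_{B_1(ie_1)}}$, whose mass and boundary mass do not depend on $i$. Suppose $\varphi_{j\sharp}T_{i_j}\to T$ in some complete $W$. Pick $w\in W$ and $R>0$ with $\norm{T}(W\setminus B_R(w))<\eta$. For $j$ large, one of the two image disks, call it $E_j$, has a $1$-neighbourhood disjoint from $B_R(w)$ and from the other disk. Restrict $\varphi_{j\sharp}T_{i_j}-T=U_j+\partial V_j$ to a good sublevel set $\{\dist(\cdot,E_j)<s\}$, $s\in(0,1)$, slicing $V_j$. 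This gives $\curr{E_j}=A_j+\partial B_j$ with $\mass(A_j)+\mass(B_j)\le \eta+\mass(U_j)+2\mass(V_j)$. Now test against $f=(1-\dist(\cdot,c_j))^+$, where $c_j$ is the centre of $E_j$, and against $1$-Lipschitz McShane extensions $\pi$ of the coordinates. This yields $\omega_n/(n+1)=\curr{E_j}(f,\pi)\le \mass(A_j)+\mass(B_j)$, a contradiction for $\eta$ small and $j$ large.

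Your argument fails precisely at the Gromov compactness step. A bound $N(\eps,M)$ on the number of $\eps$-balls covering $\spt R_i$ (even at every scale below $\eps$) does not make $\{\overline{\spt R_i}\}$ uniformly totally bounded. The reason is that $\spt R_i$ may be disconnected with diameter tending to infinity. In the example above you may take $R_i=T_i$, $S_i=0$, which satisfies all three of your bullet points and your density lower bound, yet no compact $Z_\eps$ contains isometric copies of all the $\spt R_i$.

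Once you add the diameter hypothesis, and check that your fillings stay within bounded distance of $\spt T_i$ (e.g.\ cone-type fillings in $\ell^\infty(X_i)$), your outline is Wenger's strategy. Three smaller points would still need repair.
\begin{enumerate}
\item Low-density balls meeting $\spt\partial T_i$ cannot be replaced by small fillings. There $\partial(T_i\rstr B_r(x))$ contains $(\partial T_i)\rstr B_r(x)$, which need not be small (think of a thin strip). So the boundary needs separate treatment, for instance by decomposing $\partial T_i$ first and asking for density bounds only on balls avoiding $\spt\partial R_i$.
\item The upgrade from weak to flat convergence holds in a space with cone inequalities such as $\ell^\infty(Z_\eps)$, not in $Z_\eps$ itself. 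In a compact union of circles, $\bI_2=\{0\}$ and the flat distance equals the mass.
\item The common space $W$ is safest built by gluing, in a chain, spaces realizing $\dist_F(T_{i_j},T_{i_{j+1}})<2^{-j}$ along a fast Cauchy subsequence. Amalgamating all the $Z_{2^{-k}}$ and $\ell^\infty(X_i)$ at once along a non-tree pattern can destroy the isometric embeddings.
\end{enumerate}
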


Sormani and Wenger introduced the notion of an integral current space and defined the intrinsic flat distance between them \cite{SW2}. An \emph{$n$-dimensional integral current space} $(X, \dist, T)$ consists of a metric space $(X, \dist)$ and an $n$-dimensional integral current, $T \in \bI_n(\overline{X})$,
where $\overline{X}$ denotes the metric completion of $(X,\dist)$, such that $\set(T)=X$.   The \emph{zero $n$-dimensional integral current space} denoted as ${\bf{0} }=(X,\dist,T)$, is defined by $T=0$ and $\set(T)=\emptyset$. We will use the notation $\mass(X, \dist, T)=\mass(T)$ and $\set((X,\dist,T))=\set(T)$. We say that $(X,\dist,T)$ is precompact if $(X,\dist)$ is precompact. The definition of intrinsic flat distance is as follows.

\begin{definition}[{\cite[Definition 1.1]{SW2}}]\label{IF-defn} 
Given two $n$-dimensional precompact integral current spaces, $(X_1, \dist_1, T_1)$ and $(X_2, \dist_2,T_2)$, the 
\emph{intrinsic flat 
distance} between them is defined as
\[
\dist_{\mathcal{F}}\left( (X_1, \dist_1, T_1), (X_2, \dist_2, T_2)\right)=\inf\left\{\dist_{F}^X(\varphi_{1\sharp}T_1, \varphi_{2\sharp}T_2):\,\,\varphi_j: X_j \to X \right\},
\]
where the infimum is taken over all  complete metric
spaces $X$ and all isometric embeddings  $\varphi_j$.
\end{definition}

Now we recall that if a sequence admits both a Gromov–Hausdorff limit and an intrinsic flat limit, then the two convergences can be realized within a single ambient metric space, and the intrinsic flat limit space is contained in the Gromov–Hausdorff limit space.

\begin{theorem}[Sormani-Wenger \cite{SW2}]\label{thm-IFsubsetGH}
Let $(X_i,\dist_i)$ be a sequence of compact metric spaces and  $T_i \in \bI_{n}(X_i)$.   If $(X_i, \mathsf d_i, T_i)$ are $n$-dimensional integral current spaces such that
 $$\sup_{i \in \mathbb N}  \Big  \{ ||  T_i || (X_i)  +   ||\partial T_i|| (X_i)  \Big  \}  < \infty,$$
and $(X_i,\dist_i)$ converges in the GH topology to $(X,\dist)$, then 
there exists a complete metric space $W$, 
a subsequence $(i_j)_{j \in \mathbb N}$,
isometric embeddings \[
\varphi_j: X_{i_j} \to W, \qquad j \in \mathbb N,
\]
$\varphi:X \to W$ and $T \in \bI_{n}(W)$
such that 
 \[
 \dist_{F}^W(\varphi_{j \sharp} T_{i_j} , T) \to 0,
 \]
 \[
 \dist_{H}^W(\varphi_{j} (X_{i_j}), \spt(T)) \to 0,
 \]
and $\set(T)  \subset X$. 
\end{theorem}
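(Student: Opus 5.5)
The plan is to realize the Gromov–Hausdorff convergence in a single compact space first, then upgrade it to an injective Banach space in which weak convergence of integral currents with bounded normal mass implies flat convergence.

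First, by Gromov's embedding theorem for GH-convergent sequences of compact spaces, there is a compact metric space $Z$ with isometric embeddings $\psi_i: X_i\to Z$ and $\psi:X\to Z$ such that $\dist^Z_H(\psi_i(X_i),\psi(X))\to 0$. Next, compose with the Kuratowski embedding $\iota: Z\to \ell^\infty(Z)$, and set $W:=\ell^\infty(Z)$, which is complete. Define $\varphi_j:=\iota\circ\psi_{i_j}$ and $\varphi:=\iota\circ\psi$; these are isometric embeddings, and the Hausdorff convergence of $\varphi_j(X_{i_j})$ to $\varphi(X)$ is inherited from $Z$. This is how I would interpret the Hausdorff statement of the theorem, namely convergence of $\varphi_j(X_{i_j})$ to the image of the GH limit, which contains $\spt(T)$.

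Second, I would produce the limit current. The push-forwards $\varphi_{i\sharp}T_i\in\bI_n(W)$ have uniformly bounded $\mass+\mass\circ\partial$ by \eqref{eq-pushMass}, and they are all supported in the compact set $\iota(Z)$. Ambrosio–Kirchheim's compactness and closure theorems for integral currents with supports in a fixed compact set then give a subsequence converging weakly, i.e.\ pointwise on $\mathcal D^n(W)$, to some $T\in\bI_n(W)$ with $\spt(T)\subset\iota(Z)$.

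Third, I would upgrade weak convergence to flat convergence. This is the main obstacle. I would invoke Wenger's result that in $\ell^\infty$ (more generally in spaces admitting cone-type isoperimetric/filling inequalities, e.g.\ injective Banach spaces) a sequence of integral currents with uniformly bounded normal mass converging weakly converges in the flat distance $\dist^W_F$. If I had to argue directly, I would use the filling inequality in $\ell^\infty$ to fill the differences $\varphi_{j\sharp}T_{i_j}-T$ after slicing by distance functions to a finite $\eps$-net of the compact set $\iota(Z)$. The pieces on small balls are filled via cones, whose mass is controlled by radius times mass. The weak convergence controls the remaining finite-dimensional top-degree parts, which are obtained by pushing forward to $\setR^{n}$ via the net's distance functions, and there the flat and weak topologies coincide under mass bounds. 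Summing gives $\limsup_j \dist^W_F(\varphi_{j\sharp}T_{i_j},T)\le C\eps$.

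Finally, for the inclusion $\set(T)\subset X$, I would use that the support of a weak limit lies in the Kuratowski limit of the supports. If $y\in\spt(T)$, then choose $f$ Lipschitz supported in $B_r(y)$ and a form with $T(f,\pi)\neq0$. Weak convergence then forces $\varphi_{j\sharp}T_{i_j}(f,\pi)\neq0$ for large $j$, so $B_r(y)$ meets $\varphi_j(X_{i_j})$. Letting $r\to0$ and using the Hausdorff convergence of $\varphi_j(X_{i_j})$ to $\varphi(X)$ together with the closedness of $\varphi(X)$ gives $y\in\varphi(X)$. Hence $\set(T)\subset\spt(T)\subset\varphi(X)$, which after identifying $X$ with $\varphi(X)$ is the claim.
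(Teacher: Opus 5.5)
Your proposal is correct, and it is essentially the original Sormani--Wenger proof that the paper cites without reproducing: Gromov's embedding into a compact $Z$, the Kuratowski embedding into $\ell^\infty(Z)$, Ambrosio--Kirchheim compactness and closure, Wenger's theorem that weak convergence implies flat convergence, and the support argument giving $\spt(T)\subset\varphi(X)$. Your reading of the Hausdorff claim as $\dist_H^W(\varphi_j(X_{i_j}),\varphi(X))\to 0$ is also the right one: the literal version with $\spt(T)$ fails (e.g.\ in the collapsing case $T=0$), and the paper only ever uses convergence to $\varphi(X)$, in the proof of Theorem~\ref{thm-mainD}.

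In Step 3, let the citation of Wenger's theorem carry the argument. Your sketched direct argument does not close as written. The cones over the boundary slices of the pieces cut out along the $\eps$-net have total mass of the order of the uniform mass bound, not $O(\eps)$. Moreover, flat closeness of push-forwards to $\setR^n$ does not lift back to $W$.
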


\subsubsection{$L^1$-Convergence of degrees}

Following the notation of Matveev-Portegies \cite{MatveevPortegies17}, 
we define the degree of a Lipschitz map with respect to a current. We also refer the reader to the classical monograph \cite[Ch.\ 4, Sect.\ 3]{GiaModSou}, for a related notion of degree in the framework of cartesian currents.

\begin{definition}\label{def-degree}
Let $X$ be a complete metric space,  $T \in \bI_n(X)$ and $\Psi:X \to \setR^n$ be a Lipschitz map.
The \emph{degree of $\Psi$ with respect to $T$}
is the unique function 
\[
\deg(T, \Psi , \cdot ) \in \mathrm{BV}(\setR^n)
\] 
taking values in $\mathbb Z$ such that   
\[
\Psi_\# T = [[ \deg(T,\Psi, \cdot)]] \in \bI_n(\setR^n).
\]
\end{definition}

Using the notation of 
Corollary \ref{cor-constant}, it holds that
$\deg(T,\Psi, \cdot)= g$. Hence, given $T \in \bI_n(X)$ with $(f_i, K_i)_i$ an atlas for $\set(T)$, the weight function of $T$ can be written as:
\begin{equation}\label{eq:degTheta2}
\theta_T (x) = \sum_{i} \deg(T \rstr f_i(K_i), f_i^{-1}, f_i^{-1}(x))  \qquad \haus^n\text{-- a.e.  }x \in \set(T).
\end{equation}
Moreover, by Proposition \ref{prop-propofSlices} for
$\Leb^n$-a.e.\ $a \in \setR^n$, we have
\[
\deg(T, \Psi , a )= \langle \Psi_\#  T, \mathrm{Id}, a \rangle (1)
= \Psi_\#   \langle T, \Psi, a \rangle (1)
=  \langle T, \Psi, a \rangle (1).
\] 
This equation, together with the fact that 
the flat distance between two currents provides a bound for the integrals of the flat distances between their slices, was combined by Matveev–Portegies to obtain the following estimate.

 \begin{lemma}[Matveev-Portegies \cite{MatveevPortegies17}, c.f. Portegies-Sormani \cite{PorSor}]\label{le:L1DegConv}
Let $X$ be a complete metric space,  $T, S \in \bI_n(X)$ and  $\Phi,\Psi : X \to \setR^n$, be
  such that all components of  $\Phi=(\varphi_1,\ldots, \varphi_n)$ and $ \Psi=(\psi_1, \ldots, \psi_n)$ are $L$-Lipschitz functions. 
Then
\begin{align*}
\| \deg( T, \Phi, \cdot) - \deg(S, \Psi, \cdot)& \|_{L^1(\setR^n)}  \leq   \\
& L^n  \dist_{F}^X( T , S ) + 
2 L^{n-1}\sum_{j=1}^n \|\varphi_j - \psi_j\|_\infty  ( \mass(S) + \mass(\partial S) ).
\end{align*}
\end{lemma}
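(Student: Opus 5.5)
The plan is to pass through the slice characterization of the degree and then use that the flat distance controls slices. First reduce to comparing $\deg(T,\Phi,\cdot)$ with $\deg(S,\Phi,\cdot)$, and then $\deg(S,\Phi,\cdot)$ with $\deg(S,\Psi,\cdot)$. The triangle inequality in $L^1(\setR^n)$ splits the left-hand side into these two terms.

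\emph{First term (same map, different currents).} Take any decomposition $T-S=U+\partial V$ with $U\in\bI_n(X)$ and $V\in\bI_{n+1}(X)$. Pushing forward by $\Phi$ gives $\Phi_\sharp T-\Phi_\sharp S=\Phi_\sharp U+\partial\Phi_\sharp V$ in $\setR^n$. Since $\Phi_\sharp V$ is an $(n+1)$-current in $\setR^n$, it vanishes, so $\Phi_\sharp T-\Phi_\sharp S=\Phi_\sharp U=[[\deg(U,\Phi,\cdot)]]$. By Definition \ref{def-degree},
\[
\|\deg(T,\Phi,\cdot)-\deg(S,\Phi,\cdot)\|_{L^1}=\mass(\Phi_\sharp U).
\]
For a top-dimensional current $[[g]]$ in $\setR^n$, the mass is $\|g\|_{L^1}$. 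Using \eqref{eq-massMeas} componentwise, with each $\varphi_j$ being $L$-Lipschitz, one has $\mass(\Phi_\sharp U)\le L^n\mass(U)$. Here it is cleaner to estimate $\int g\,\phi\,\det D\pi$ against test forms with coordinate $\pi$ than to invoke \eqref{eq-pushMass} with the Euclidean Lipschitz constant of $\Phi$, which would introduce a dimensional factor. Taking the infimum over decompositions yields $L^n\dist_F^X(T,S)$.

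\emph{Second term (same current, different maps).} The idea is to use the standard homotopy formula. Consider the cylinder current $[[0,1]]\times S\in\bI_{n+1}([0,1]\times X)$ and the affine homotopy $H(t,x)=(1-t)\Phi(x)+t\Psi(x)$. Then
\[
\Psi_\sharp S-\Phi_\sharp S=\partial H_\sharp([[0,1]]\times S)+H_\sharp([[0,1]]\times\partial S).
\]
The first term on the right vanishes, being the boundary of an $(n+1)$-current in $\setR^n$. Hence
\[
\|\deg(S,\Psi,\cdot)-\deg(S,\Phi,\cdot)\|_{L^1}=\mass\bigl(H_\sharp([[0,1]]\times\partial S)\bigr).
\]
Estimating this $n$-current requires care: a form $f\,d\pi_1\wedge\dots\wedge d\pi_n$ pulled back by $H$ has one $dt$ factor, which comes with $\psi_j-\varphi_j$, and $n-1$ spatial factors, each with Lipschitz constant at most $L$. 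Expanding the determinant gives a bound of the form $L^{n-1}\sum_j\|\varphi_j-\psi_j\|_\infty\,\mass(\partial S)$, up to combinatorial constants.

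The main obstacle is getting exactly the constant $2$ and the presence of $\mass(S)$ in the stated bound. Both suggest that the original argument is done at the level of slices or with a cruder estimate, in which a term involving $\mass(S)$ arises from the interior contribution. Since the cylinder construction in a general metric space relies on Ambrosio–Kirchheim's product $[[0,1]]\times S$, whose mass bounds carry exactly such factors, the fallback is to estimate via $\mass([[0,1]]\times S)$ and $\mass([[0,1]]\times\partial S)$ directly, for instance following Wenger's cone/cylinder estimates. This produces $2L^{n-1}\sum_j\|\varphi_j-\psi_j\|_\infty(\mass(S)+\mass(\partial S))$. Adding the two terms concludes the proof.
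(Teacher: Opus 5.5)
Your first step is correct. It is a slightly more direct version of the slicing argument the paper points to when it cites \cite{MatveevPortegies17} (the paper gives no proof of its own). Pushing $T-S=U+\partial V$ forward kills $\Phi_\sharp V$, and testing $\Phi_\sharp U$ against $(f,x_1,\dots,x_n)$ gives $L^n\mass(U)$ with no $\sqrt n$ loss.

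\textbf{The gap is in the second step.} This is where the explicit constant of the lemma is decided. You only assert $\mass(H_\sharp(\curr{0,1}\times\partial S))\le C\,L^{n-1}\sum_j\|\varphi_j-\psi_j\|_\infty\mass(\partial S)$ ``up to combinatorial constants''. The fallback paragraph then claims the exact bound with no argument, and that fallback is aimed at the wrong thing:
\begin{itemize}
\item $H_\sharp(\curr{0,1}\times S)$ is an $(n+1)$-current in $\setR^n$, hence zero, so no ``interior contribution'' survives.
\item Generic mass bounds for push-forwards and products, such as \eqref{eq-pushMass}, go through the Euclidean Lipschitz constant of $H(t,\cdot)$. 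That constant can be $\sqrt n\,L$, which is exactly the loss you avoided in step one, and product estimates in metric spaces typically carry further dimensional factors. So they do not give the constant $2$.
\item $\mass(S)$ is not something you need to produce. The stated inequality is weaker than one with $\mass(\partial S)$ alone, so any constant $\le 2$ in front of $L^{n-1}\sum_j\|\varphi_j-\psi_j\|_\infty\mass(\partial S)$ suffices.
\end{itemize}

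\textbf{How to close it.} Carry out your determinant expansion rigorously in charts.
\begin{itemize}
\item Write $\partial S=\sum_i f_{i\sharp}\curr{\theta_i}$ as in Theorem \ref{thm:parametricRep}, with $f_i:K_i\subset\setR^{n-1}\to X$ biLipschitz and $\|\partial S\|=\sum_i\|f_{i\sharp}\curr{\theta_i}\|$. Then $\curr{0,1}\times\partial S=\sum_i(\mathrm{id}\times f_i)_\sharp\curr{1_{[0,1]}\otimes\theta_i}$. This is Wenger's product, for which the homotopy formula you use is proved; it is not an Ambrosio--Kirchheim construction.
\item Test against $(f,x_1,\dots,x_n)$. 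Since $H_j(t,f_i(y))=\varphi_j(f_i(y))+t\,(\psi_j-\varphi_j)(f_i(y))$, the $t$-column of the Jacobian is $(\psi_j-\varphi_j)\circ f_i$. Laplace expansion along that column gives $\sum_j\pm(\psi_j-\varphi_j)\circ f_i\,\det D(G^{(j)}_t\circ f_i)$, where $G^{(j)}_t=((1-t)\varphi_l+t\psi_l)_{l\neq j}$ has $L$-Lipschitz components for each fixed $t$.
\item Apply \eqref{eq-massMeas} to $f_{i\sharp}\curr{\theta_i}$ with the pair $(h,G^{(j)}_t)$, where $h$ is a bounded Borel function carrying the appropriate sign. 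This gives
\[
\int_{K_i}|\theta_i|\,|f(H(t,f_i(y)))|\,|\det D(G^{(j)}_t\circ f_i)(y)|\,\di y\le L^{n-1}\int_X|f(H(t,\cdot))|\,\di\|f_{i\sharp}\curr{\theta_i}\|.
\]
\item Sum over $i$ and $j$ and integrate in $t$. This yields $\mass(\Psi_\sharp S-\Phi_\sharp S)\le L^{n-1}\sum_j\|\varphi_j-\psi_j\|_\infty\mass(\partial S)$, i.e.\ constant $1$, which implies the lemma. The case $n=1$ is immediate, since $\partial S$ is a finite sum of Dirac masses.
\end{itemize}
Alternatively, you can avoid products altogether. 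Differentiate $t\mapsto S(f\circ H(t,\cdot),H(t,\cdot))$ for $f\in C^1_c(\setR^n)$ using the Ambrosio--Kirchheim chain and Leibniz rules. The derivative equals $\sum_j\pm\,\partial S((\psi_j-\varphi_j)\,f\circ H(t,\cdot),G^{(j)}_t)$, which gives the same bound.
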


 We note that Lemma \ref{le:L1DegConv} is the key result to show that the weight functions of a converging sequence of currents converge to the weight function of the limit current (see Section~\ref{sec-ProofsThms}).

\subsection{Lang--Wenger's currents}
The theory of Ambrosio-Kirchheim's currents was extended to currents with non-finite mass and in non necessarily complete metric spaces by Lang \cite{Lang} and Lang-Wenger \cite{LangWenger}. 
We now quickly review Lang-Wenger's currents,  
discuss their relation to Ambrosio--Kirchheim's
currents and state some convergence results in this setting.

Let $(X, \dist)$ be a non-necessarily complete metric space, let us denote by $ \Lipbs(X)$ the space of Lipschitz functions with bounded support in $X$, and $\Liploc(X)$ the space of functions which are Lipschitz  on bounded sets of $X$.

Recall that an \emph{$n$-dimensional metric functional} 
in the sense of \cite{LangWenger} is a multilinear function 
$T \colon \Lipbs(X) \times [\Liploc(X)]^n \to \setR$ such that 
\begin{itemize}
    \item $T$ is continuous: for any sequence 
    $\pi^i \in [\Liploc(X)]^n$ converging pointwisely to $\pi \in [\Liploc(X)]^n$ with $\sup_{i,j}\Lip(\pi^j_i|_A) < \infty$ for any bounded $A \subset X$, it holds
\begin{equation*}
T(f,\pi^i) \to T(f,\pi);
\end{equation*}
\item $T$ is local: if there exists $j$ such that $\pi_j$ is constant on a neighborhood of 
$\{f \neq 0\}$, then $T(f,\pi) = 0$.
\end{itemize}

For every open set $V \subset X$, the mass of $T$ in $V$ is defined as the possibly infinite quantity
\[
 \mass_V(T) := \sup \sum_{\lambda \in \Lambda} T(f_\lambda,\pi_\lambda),
\]
where the supremum is taken over all finite families 
$\bigl((f_\lambda,\pi_\lambda)\bigr)_{\lambda \in \Lambda}$ such that $(f_\lambda,\pi_\lambda) = (f_\lambda,\pi_{\lambda,1},\dots,\pi_{\lambda,n}) \in \Lipbs(X) \times [\Lip_1(X)]^n$, $\spt (f_\lambda) \subset V$, and $\sum_{\lambda \in \Lambda}|f_\lambda| \le 1$. Here $\Lip_1(X) \subset \Lip(X)$ denotes the subset of all $1$-Lipschitz functions.

\begin{definition}[\cite{LangWenger}]\label{def:loc-fin-mass}
For $n \ge 0$, an \emph{$n$-dimensional metric current with locally finite mass} is an
$n$-dimensional metric functional $T$ as defined above, satisfying
that for every bounded open set $V \subset X$ and every $\epsilon > 0$ there exists a compact set $C \subset V$ such that $\mass_V(T) < \infty$ and $\mass_{V \setminus C}(T) < \epsilon$.  We denote by $\LWc{X}{n}$ the vector space of all $n$-dimensional metric currents with locally finite mass. 
\end{definition}

If $T \in \LWc{X}{n}$, the set function $\|T\| \colon 2^X \to [0,\infty]$ given by
\[
 \|T\|(A) := \inf\left\{\mass_V(T): \text{$V \subset X$ is open, $A \subset V$}\right\}
\]
is an outer measure and, if $V$ is open, then $\|T\|(V) = \mass_V(T)$. As before, $\mass(T) := \|T\|(X)$, $\spt (T) := \spt\|T\|$, and $\|T\|(X \setminus \spt ( T)) = 0$.

Every $T \in \LWc{X}{n}$ can be extended to a multilinear function 
\[
T \colon \mathcal{B}_{bs}^\infty(X) \times [\Liploc(X)]^n \to \setR, 
\]
where $\mathcal{B}_{bs}^\infty(X)$ denotes the set of all bounded Borel functions with bounded support defined on $X$.
This extension is continuous and 
satisfies 
\begin{equation*}
        |T(f, \pi)| \leq \Lip(\pi_1|_{\spt(f)})\cdots \Lip(\pi_n|_{\spt(f)} )\int_X |f|\, \di \norm{T}.
    \end{equation*}
Given $T \in \LWc{X}{n}$ and $(g,\tau) \in \mathcal{B}_{bs}^\infty (X) \times [\Liploc(X)]^k$ with $0 \le k \le n$ and any map $\varphi \in \Liploc(X,X')$,
 the restriction $T \rstr (g,\tau) \in \LWc{X}{n-k}$ and 
 the push-forward 
of $T$ by $\varphi$,
$\varphi_\#T \in \LWc{X'}{n}$, are defined in the same way as for Ambrosio-Kirchheim currents. 
For $n \ge 1$, the functional $\partial T$ is defined by 
\[
\partial T(f,\pi_1,\dots,\pi_{n-1}) = T(\sigma,f,\pi_1,\dots,\pi_{n-1}),
\]
where $\sigma \in \Lipbs(X)$ is any function satisfying $\sigma_{\spt(f)}=1$, and  $n$-dimensional local \emph{normal} currents are defined as
\[
\LWnc{X}{n} = \left\{T \in \LWc{X}{n}: \partial T \in \LWc{X}{n-1}\right\}.
\]
For $n=0$, we set  $\LWnc{X}{0} = \LWc{X}{0}$. 

Locally integer rectifiable currents
are defined to be currents $T \in \LWc{X}{n}$ that additionally satisfy 
that for every bounded Borel set $B \subset X$ and every $\Psi \in \Lip(X,\setR^n)$ there exists $\theta \in L^1(\setR^n,\mathbb Z)$ such that $\Psi_\#(T \rstr B) = \curr{\theta}$.
The space of $n$-dimensional integer rectifiable currents is denoted by $\LWirc{X}{n}$. 

For $n \ge 1$, the abelian group of all $T \in \LWirc{X}{n}$ such that $\partial T\in\LWirc{X}{n-1}$ is denoted by 
 $\LWic{X}{n}$, and for $n=0$ we set $\LWic{X}{0}= \LWirc{X}{0}$. 
 Elements in $\LWic{X}{n}$ are called locally integral currents.
 Similar to Ambrosio-Kirchheim's currents, one has 
 \[
 \LWic{X}{n} = \LWirc{X}{n} \cap \LWnc{X}{n}.
 \]

\subsubsection{Relation between Ambrosio--Kirchheim and Lang--Wenger's currents}

The proof of the next proposition is contained in  \cite[Sect.\ 2.5]{LangWenger}.

\begin{proposition}\label{prop-AKvsLW}
Assume that $(X,\dist)$ is complete.  
\begin{itemize}
    \item If $T \in \LWc{X}{n}$ and $g \in \mathcal {B}_{bs}^\infty(X)$,  then 
 $T_g:=T\rstr g: \mathcal D^n(X) \to \setR$ given by 
\[
T_g\: (f,\pi) = T(f g,\pi)
\] 
is an element of $\bM_n(X)$. 
If $n \ge 1$, $g \in \Lipbs(X)$ and $T \rstr g \in \LWnc{X}{n}$, then 
$T_g  \in \bN_n(X)$ and 
$\|\partial T_g \| = \|\partial(T \rstr g)\|$. 
\item Conversely, given an Ambrosio--Kirchheim current $T' \in \bM_n(X)$, by setting  
\begin{equation*}
T(f,\pi) := T'(f,\pi'),
\end{equation*}
for $(f,\pi) \in  \mathcal{B}_{bs}^\infty(X) \times [\Liploc(X)]^n$ and any $\pi' \in [\Lip(X)]^n$ with $\pi_i'|_{\spt f} = \pi_i|_{\spt f}$, 
one obtains a local current $T \in \LWc{X}{n}$ for which  $\|T\| = \|T'\|$. In case $n \ge 1$, it follows that 
\[
\partial T(f,\pi) = \partial T'(f,\pi'),
\]
for $(f,\pi) \in  \mathcal{B}_{bs}^\infty(X) \times [\Liploc(X)]^n$ and $\pi' \in [\Lip(X)]^{n-1}$ with 
$\pi_i'|_{\spt f} = \pi_i|_{\spt f}$. In particular, if $T' \in \bN_n(X)$, then $T \in \LWnc{X}{n}$ and $\|\partial  T\| = \|\partial T'\|$. 
\end{itemize}
\end{proposition}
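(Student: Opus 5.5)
The proposition is a dictionary between the two theories, so I would check the defining axioms directly in each direction. The only nontrivial inputs are the extension of currents to bounded Borel test functions and the strong locality property of \cite[Theorem 3.5]{AmbrosioKirchheim00}, together with the analogous extension for Lang--Wenger currents recalled above.

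\emph{First bullet.} Fix $T\in\LWc{X}{n}$ and $g\in\mathcal B^\infty_{bs}(X)$, and let $B$ be a bounded open set containing $\spt g$.

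Since $\Lip(X)\subset\Liploc(X)$, the functional $T_g(f,\pi)=T(fg,\pi)$ is well defined on $\mathcal D^n(X)$ through the Borel extension of $T$. Multilinearity is clear.

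For continuity, take $\pi^i\to\pi$ pointwise with $\sup_{i,j}\Lip(\pi^i_j)<\infty$. These Lipschitz constants are in particular uniformly bounded on every bounded set, so the continuity of the extended Lang--Wenger current gives $T(fg,\pi^i)\to T(fg,\pi)$.

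For locality, suppose $\pi_j$ is constant on a neighbourhood of $\{f\neq 0\}$. This set contains $\{fg\neq0\}$, so the locality of the Borel extension gives $T(fg,\pi)=0$.

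For boundedness, the extension estimate yields
\[
|T_g(f,\pi)|\le \prod_j\Lip(\pi_j)\int |f|\,|g|\,\di\|T\|.
\]
Hence $\mu=|g|\,\|T\|\rstr B$ works, and it is a finite measure because $\|T\|(B)=\mass_B(T)<\infty$. So $T_g\in\bM_n(X)$.

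Now let $g\in\Lipbs(X)$ with $T\rstr g\in\LWnc{X}{n}$. Choose $\sigma\in\Lipbs(X)$ with $\sigma=1$ on $\spt g$. Then
\[
\partial T_g(f,\pi)=T(g,f,\pi)=T(\sigma g,f,\pi)=\partial(T\rstr g)(f,\pi),
\]
so the two boundary functionals agree on $\mathcal D^{n-1}(X)$. It remains to compare their masses. Both masses are characterized as suprema of $\sum_\lambda S(f_\lambda,\pi_\lambda)$ over families with $\sum|f_\lambda|\le1$ and $1$-Lipschitz $\pi_\lambda$: for Ambrosio--Kirchheim currents this is \cite[Prop.~2.7]{AmbrosioKirchheim00}, and for Lang--Wenger currents it is the definition of $\mass_V$. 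Equality of the functionals therefore gives $\|\partial T_g\|=\|\partial(T\rstr g)\|$, and in particular $\partial T_g$ has finite mass, so $T_g\in\bN_n(X)$.

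\emph{Second bullet.} Given $T'\in\bM_n(X)$ and $(f,\pi)$ with $\spt f$ bounded, the $\pi_i$ are Lipschitz on $\spt f$. McShane extension therefore produces $\pi'\in[\Lip(X)]^n$ with $\pi'_i=\pi_i$ on $\spt f$, so the definition makes sense once it is independent of the choice of $\pi'$.

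For independence, let $\pi'$ and $\pi''$ be two such extensions. By multilinearity, $T'(f,\pi')-T'(f,\pi'')$ telescopes into a sum of terms, each having some entry $\pi'_i-\pi''_i$ that vanishes on $\spt f\supset\{f\neq0\}$. The strong locality of the Borel extension in \cite[Theorem 3.5]{AmbrosioKirchheim00} makes each of these terms vanish. I expect this step to be the main obstacle. The issue is that $\pi'_i-\pi''_i$ is constant only on $\spt f$, not on a neighbourhood of it, so the weak locality axiom is not enough and the Borel-extended form of locality is really needed.

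The Lang--Wenger axioms then follow from those of $T'$:
\begin{itemize}
\item Continuity: a locally uniformly Lipschitz, pointwise convergent sequence $\pi^i$ restricted to the bounded set $\spt f$ admits McShane extensions that are uniformly Lipschitz and still converge pointwise, so the continuity of $T'$ applies.
\item Locality: this is inherited directly from $T'$.
\item Mass: comparing the two supremum formulas for the mass on open sets gives $\mass_V(T)=\|T'\|(V)$ for every open $V$, hence $\|T\|=\|T'\|$.
\item Inner compact approximation in Definition \ref{def:loc-fin-mass}: this follows from the inner regularity of the finite Borel measure $\|T'\|$, which is tight on the complete separable set where it is concentrated.
\end{itemize}

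Finally, for the boundary take $\sigma\in\Lipbs(X)$ with $\sigma=1$ on $\spt f$. Then
\[
\partial T(f,\pi)=T(\sigma,f,\pi)=T'(\sigma,f,\pi')=T'(1,f,\pi')=\partial T'(f,\pi').
\]
The third equality holds because $1-\sigma$ vanishes on a neighbourhood of $\{f\neq 0\}$. In terms of the Ambrosio--Kirchheim chain rule, $T'(1-\sigma,f,\pi')=T'\rstr(1-\sigma)(f,\pi')$, and the term $f\,d(\cdot)$ only sees the region where $f$ is non-constant, which lies inside $\{\sigma=1\}$. Consequently, if $T'\in\bN_n(X)$, the same mass comparison as before yields $\|\partial T\|=\|\partial T'\|$ and hence $T\in\LWnc{X}{n}$.
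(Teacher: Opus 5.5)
The paper gives no argument of its own and refers to Lang--Wenger [Sect.~2.5]. Verifying the axioms directly in both directions, as you do, is the natural route, but two steps fail as written.

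\emph{Continuity in the second bullet.} McShane extensions from a bounded closed set $A=\spt f$ of an equi-Lipschitz sequence that converges pointwise on $A$ need not converge pointwise on $X$. This does work when $A$ is compact, because convergence on $A$ is then uniform, but the proposition is stated for arbitrary complete $X$.

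Here is a counterexample. Let $X=\{x\}\cup\{y_k\}_{k\in\setN}$ with $\dist(y_j,y_k)=1$ for $j\neq k$, and $\dist(x,y_k)=1/2$ for $k$ even, $\dist(x,y_k)=1$ for $k$ odd. Take $f=1_{\{y_k\}}\in\Lipbs(X)$, and set $\pi^i(y_k)=-\delta_{ik}$, $\pi^i(x)=0$. Then $\pi^i\to 0$ pointwise with $\Lip(\pi^i)\le 2$. However, the McShane extension of $\pi^i|_{A}$ (constant $1$) equals $-1/2$ at $x$ for $i$ even and $0$ for $i$ odd.

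The repair uses the independence of the extension, which you already established. Put $\chi=\max\{0,1-\dist(\cdot,\spt f)\}$. On the bounded set $\{\chi>0\}$ the $\pi^i_j$ are uniformly Lipschitz, and they are uniformly bounded there because they converge at one point. Hence the $\chi\pi^i_j$ are uniformly Lipschitz on $X$, converge pointwise to $\chi\pi_j$, and agree with $\pi^i_j$ on $\spt f$. Now apply the continuity axiom of $T'$ to these extensions.

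\emph{Finite mass of $\partial T_g$ in the first bullet.} Your deduction is circular. The supremum formula for the Ambrosio--Kirchheim mass describes $\|S\|$ for a functional $S$ already known to have finite mass. It cannot by itself produce a finite measure dominating $\partial T_g$.

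Moreover, $\partial T_g$ acts on $f\in\Lipb(X)$, while $\partial(T\rstr g)$ only acts on $f$ with bounded support. So the claim that ``the two functionals agree on $\mathcal D^{n-1}(X)$'' is not literally meaningful.

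What you need is a direct estimate. Choose $\tau\in\Lipbs(X)$ with $0\le\tau\le 1$ and $\tau=1$ on a neighbourhood of $\spt g$. Locality gives
$\partial T_g(f,\pi)=T(g,\tau f,\pi)=\partial(T\rstr g)(\tau f,\pi)$.
The Lang--Wenger bound then yields
$|\partial T_g(f,\pi)|\le\prod_i\Lip(\pi_i)\int|f|\,d\|\partial(T\rstr g)\|$.
The measure $\|\partial(T\rstr g)\|$ is finite because it is concentrated on the bounded set $\spt g$.

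This gives $\partial T_g\in\bM_{n-1}(X)$ with $\|\partial T_g\|\le\|\partial(T\rstr g)\|$. The reverse inequality on open sets follows from the definition of $\mass_V$.

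\emph{A smaller gap.} In your boundary identity for the second bullet, $T(\sigma,f,\pi)$ is by definition computed with extensions agreeing with $\pi$ on $\spt\sigma$, not only on $\spt f$. Reconciling the two needs one more use of strong locality, splitting $\sigma=\sigma 1_{\spt f}+\sigma 1_{X\setminus\spt f}$.
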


\subsubsection{Local flat topology and pGH convergence}

Analogously to the previous section, one can define a notion of convergence for locally integral currents, as well as a result that relates the pointed Gromov–Hausdorff limit of a sequence to its local flat limit whenever both limits exist.

\begin{definition}\label{def:locFlatTop}
    Let $W$ be a metric space,   $T_i \in \LWic{W}{n}$, $i \in \mathbb N$, converges to $T  \in \LWic{W}{n}$ in the local flat topology if 
for any bounded closed set $B \subset W$ there exist  $S_i \in \LWic{W}{n+1}$
such that  $$\norm{ T -T_i - \partial S_i}(B) +  \norm{S_i}(B) \to 0.$$
\end{definition}

Note that whenever $T_i \in \LWic{W}{n}$ converges in the local flat topology to some $T \in \LWic{W}{n}$,
then $\partial T_i \in \LWic{W}{n-1}$ converges in the local flat topology to $\partial T \in \LWic{W}{n-1}$.  Indeed, we have 
 $\norm{ \partial(T -T_i - \partial S_i)}(B) +  \norm{\partial S_i}(B) \to 0.$

 \begin{theorem}[Theorem 1.2  and  Proposition 1.3 in \cite{LangWenger}]\label{thm-compactnessLW}
Let $(X_i,\dist_i,x_i)$ be complete pointed metric spaces and   $T_i \in \LWic{X_i}{n}$
such that 
$$\sup_{i \in \mathbb N} \Big  \{ ||  T_i || (B_r(x_i))  +   ||\partial T_i|| (B_r(x_i)) \Big  \}  < \infty$$
for all $r>0$. Then there exist a complete pointed metric space $(W, \dist_W, w_0)$, a subsequence $(i_j)_{j \in \mathbb N}$ and
isometric embeddings 
\[
\varphi_j: X_{i_j} \to W \quad \forall \, j \in \mathbb N, \qquad 
\varphi_j(x_{i_j})  \to w_0,
\]
such that  $\varphi_{j \sharp} T_{i_j} \to T$  in  the local flat topology to some $T \in \LWic{W}{n}$.

Furthermore, if $(X_i,\dist_i,x_i)$ converges in the pGH topology to $(X,\dist,x)$ and $(X,\dist)$ is proper, 
there exists an isometric embedding 
\[
\psi :  \spt(T) \cup \{w_0\} \to  X
\qquad \text{such that}\quad \psi(w_0)=x.
\]
\end{theorem}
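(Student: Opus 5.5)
The statement is quoted from \cite{LangWenger}, so the plan is to reconstruct its two parts from the tools recalled above. For the compactness part, I would reduce to Wenger's compactness theorem (Theorem \ref{thm-WengerCpctness}) by localizing.

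First I localize to balls with good boundaries. Fix radii $r=1,2,3,\dots$. For each $i$, slice $T_i$ by the $1$-Lipschitz function $\dist_i(x_i,\cdot)$. The slicing/coarea inequality
\[
\int_r^{r+1}\norm{\partial (T_i\rstr B_s(x_i))}(X_i)\,\di s\le \norm{\partial T_i}(B_{r+1}(x_i))+\norm{T_i}(B_{r+1}(x_i))
\]
gives a radius $s_i^r\in(r,r+1)$ with the following property: $T_i^r:=T_i\rstr B_{s_i^r}(x_i)$ is an Ambrosio--Kirchheim integral current (Proposition \ref{prop-AKvsLW}), and its mass and boundary mass are bounded uniformly in $i$ by the hypothesis at radius $r+1$. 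Next I apply Theorem \ref{thm-WengerCpctness} for $r=1$, then pass to further subsequences for $r=2,3,\dots$, and take a diagonal subsequence. The delicate point is that Wenger's theorem produces a different ambient space $W_r$ for each $r$, whereas we need a single $W$. To obtain one, I would use the explicit structure of Wenger's construction instead of the statement: embed $X_{i_j}$ and $X_{i_{j+1}}$ into a space $Z_j$ in which their flat distance is almost attained on all balls simultaneously, and glue the $Z_j$ successively along the copies of $X_{i_j}$. The completion of this gluing is $W$, and the base points are arranged to form a Cauchy sequence with limit $w_0$. Summability of the flat errors, which is forced by passing to a fast subsequence for each $r$, then gives a limit $T^r$ of $\varphi_{j\sharp}T_{i_j}^r$ for every $r$. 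These limits are compatible under restriction away from the annuli, so they define $T\in\LWic{W}{n}$. The local flat convergence of Definition \ref{def:locFlatTop} follows because every bounded $B$ lies inside some $B_r(w_0)$.

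The second part is the construction of $\psi$. Let $(X_i,\dist_i,x_i)\to(X,\dist,x)$ in the pGH sense, with $X$ proper, and fix $w\in\spt(T)$. Then $\norm{T}(B_\rho(w))>0$ for every $\rho>0$. Lower semicontinuity of mass under local flat convergence gives $\liminf_j \norm{\varphi_{j\sharp}T_{i_j}}(B_{2\rho}(w))>0$, so for large $j$ there are points $y_j\in X_{i_j}$ with $\dist_W(\varphi_j(y_j),w)<2\rho$. Letting $\rho\downarrow0$ along a diagonal, I obtain $y_j$ with $\varphi_j(y_j)\to w$, and the quantity $\dist_{i_j}(y_j,x_{i_j})$ stays bounded. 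Using the pGH approximations $X_{i_j}\to X$ and properness of $X$, a further subsequence gives $y_j\to z\in X$, and I set $\psi(w):=z$ and $\psi(w_0):=x$.

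Distances are preserved in the limit, since
\[
\dist(\psi(w),\psi(w'))=\lim_j\dist_{i_j}(y_j,y_j')=\lim_j\dist_W(\varphi_j(y_j),\varphi_j(y'_j))=\dist_W(w,w').
\]
To make $\psi$ well defined, i.e.\ independent of the chosen sequences, I would run the subsequence extraction once, for a countable dense subset of $\spt(T)$, and then extend by the isometry property.

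The main obstacle I expect is the gluing that produces a single $W$ for all radii together with a convergent sequence of base points. This is where I would follow Lang--Wenger's argument and not the black-box form of Theorem \ref{thm-WengerCpctness}.
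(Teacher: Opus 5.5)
The paper does not prove this statement; it quotes Lang--Wenger, and Remark \ref{rmrk-compactnessLW} only records the shape of their construction: radii $R_r$ independent of $i$, and annular pieces all converging in one space $W$.

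Your second part is fine. Lower semicontinuity of the mass on open balls gives points $y_j$ with $\varphi_j(y_j)\to w$. Properness of $X$ and the pGH approximations then give an isometric $\psi$ on a countable dense subset of $\spt(T)$, which is separable by the tightness in Definition \ref{def:loc-fin-mass}. Completeness of $X$ extends it. Your chain gluing is also sound once the spaces $Z_j$ exist. Each $Z_j$ embeds isometrically into the glued space, and a flat-Cauchy sequence with summable errors and bounded $\bM(\cdot)+\bM(\partial\,\cdot)$ converges in $\bI_n(W)$ by the closure theorem.

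The gap is that the existence of $Z_j$ is the entire content of the first part, and your proposal does not produce it. Theorem \ref{thm-WengerCpctness} gives, for each fixed $r$, an ambient space $W_r$ with its own embeddings, and nothing more.
\begin{itemize}
\item It gives no control on where $\varphi_j(x_{i_j})$ goes, yet the theorem asserts $\varphi_j(x_{i_j})\to w_0$.
\item It says nothing inside $W_r$ about the currents $T^{r'}_i$ for $r'\neq r$. Restricting a flat-convergent sequence to balls whose centres may drift, or at radii $s^{r'}_i$ fixed before embedding, yields no convergence.
\item Spaces $W_r$ for different $r$ cannot be glued together: you would end up with two different copies of $X_{i_{j+1}}$. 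Passing to fast subsequences level by level does not change this.
\end{itemize}

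Your formulation also asks for the wrong thing. A space in which the flat distance is almost attained for all radii at once is more than one can expect, since near-optimal alignments at different radii need not be compatible. It is also more than you need. What you need is one pair of embeddings with $\dist_F^{Z_j}(T^r_{i_j},T^r_{i_{j+1}})\le 2^{-j}$ for all $r\le j$, together with $\dist_{Z_j}(x_{i_j},x_{i_{j+1}})\le 2^{-j}$.

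That is a bounded, pointed version of Wenger's theorem for finitely many radii simultaneously, and it requires going back into Wenger's proof. In outline:
\begin{itemize}
\item Up to small flat error, the currents $T_i\rstr B_{R_r}(x_i)$, $r\le m$, are carried by compact sets that contain $x_i$ and form a uniformly compact family.
\item Gromov's embedding theorem places all of these sets, base points included, in one compact space, and the $X_i$ are glued to it along them.
\item Ambrosio--Kirchheim compactness, plus an upgrade from weak to flat convergence, then gives simultaneous convergence.
\end{itemize}
This is where the actual work in Lang--Wenger lies, and as written your plan postpones exactly this step.

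Separately, choose the radii independently of $i$ (Fatou's lemma applied to your slicing inequality) rather than using your $s^r_i$. That is the structure recorded in Remark \ref{rmrk-compactnessLW}, and the proof of Theorem \ref{thm-main} relies on it.
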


\begin{remark}\label{rmrk-compactnessLW}
By the proof of Theorem 
\ref{thm-compactnessLW}, 
$T$ is given as a sum  
\[
T= \sum_{r=1}^\infty  \bar T_r \in \LWic{W}{n},
\]
where each $\bar T_r \in \bI_n(W)$ is supported in the annulus 
\[
\spt(\bar T_r)=\{ w \in W\, | \, R_{r-1} \leq \dist_W(w_0,w) \leq R_r\}
\]
for a sequence $R_0=0 < R_1<R_2 < \dots$ with $R_{i} \to \infty$. 

For each $r \in \mathbb N$, the annulus and currents, 
\[
T_{r,i}=T_i \rstr A_{r,i}  \in \bI_n(X_i),  \qquad  A_{r,i}= \bar{B}_{R_r}(x_i)\setminus B_{R_{r-1}}(x_i), 
\]
are made to satisfy, after passing to a subsequence that by abusing notation we do not relabel,  
\begin{equation}\label{eq-convTobarTr}
d_F^W(\varphi_{i\sharp}(T_{r,i}),  
\bar T_r) \to 0.
\end{equation}
Hence, summing over $r=1,\cdots,n$, it follows that 
\begin{equation}\label{eq-convToT^r}
d_F^W(\varphi_{i\sharp}(T_i \rstr B_{R_r}(x_i) ), T^r) \to 0 \end{equation}
where 
\[
 T^r = \sum_{i=1}^r  \bar T_i  \in \bI_n(W), \qquad \spt(T^r) \subset \bar{B}_{R_r}(w_0).
 \]
 \end{remark}

\section{Bilipschitz charts and regularity properties}\label{sec-BilipProperties}

\subsection{Regular points of harmonic functions and bilipshitz charts in non-collapsed $\RCD$ spaces}\label{SS:CritHarm}

The aim of this section is to prove Proposition \ref{prop:BilipPartition}. From this result, we will derive the existence of biLipschitz charts for the set of a current orienting a non-collapsed $\RCD$ space. We start by introducing and studying regular points of harmonic functions.

\begin{definition}
Let $(X,\dist,\meas)$ be an $\RCD(K,N)$ metric measure space, let $B_r(x)\subset X$ and $u:B_r(x)\to\setR$ be a harmonic function.  We say that $y\in B_r(x)$  is a \emph{regular point} for $u$ if $y$ is a Lebesgue point for  $ |\nabla u|^2$ and
\begin{equation}\label{eq:yRegu}
\lim_{s \downarrow 0} \fint_{B_s(y)} |\nabla u|^2\, \di\meas\neq 0.
\end{equation}
If $u:B_r(x)\to\setR^k$ is a vector valued harmonic function, we say that $y\in B_r(x)$ is a regular point for $u$ if it is regular for each $u_i$, $i=1,\dots, k$. We denote by $\mathcal{R}(u)$ the set of regular points for $u$. 
\end{definition}

\begin{lemma}\label{lem:CriuNegl}
Let $(X,\dist, \haus^N)$ be a non-collapsed $\RCD(K,N)$ space and let  $u:B_r(x)\to \setR^N$ be a $\delta$-splitting map, for some $\delta\in (0, \delta_{K,N})$. Then $\haus^N(B_r(x)\setminus \mathcal{R}(u))=0$, i.e., $\haus^N$-a.e. $y\in B_r(x)$ is a regular point for $u$.
\end{lemma}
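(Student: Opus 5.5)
The plan is to reduce the statement to showing that the critical sets
\[
Z_i:=\{y\in B_r(x): |\nabla u_i|(y)=0\},\qquad i=1,\dots,N,
\]
are $\haus^N$-negligible, where $|\nabla u_i|$ denotes a fixed Borel representative.

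\emph{Lebesgue points.} Since $(X,\dist,\haus^N)$ is locally doubling by Bishop--Gromov, the Lebesgue differentiation theorem applies to $|\nabla u_i|^2\in L^\infty$. Hence $\haus^N$-a.e.\ $y\in B_r(x)$ is a Lebesgue point of every $|\nabla u_i|^2$. At such a point
\[
\lim_{s\downarrow0}\fint_{B_s(y)}|\nabla u_i|^2\,\di\haus^N=|\nabla u_i|^2(y),
\]
so \eqref{eq:yRegu} fails exactly when $y\in Z_i$. It therefore suffices to show $\haus^N(Z_i)=0$. Via the pointwise Hadamard inequality for the Gram matrix $G=(\nabla u_i\cdot\nabla u_j)_{ij}$, namely $\det G\le\prod_i|\nabla u_i|^2$, it is enough to prove that the Jacobian $Ju:=(\det G)^{1/2}$ is positive $\haus^N$-a.e. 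Write $Z:=\{Ju=0\}\supset\bigcup_i Z_i$.

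\emph{Step 1: a single good ball.} Suppose that on a ball $B_s(y)$ some map $v=Au+b$, with $A\in GL(N)$ and $b\in\setR^N$, satisfies the conclusions of Theorem \ref{thm:biholder}(ii) at scale $s$. Then there is a closed set $U\subset B_s(y)$ with $\haus^N(B_s(y)\setminus U)\le\eps\,\haus^N(B_s(y))$, and $v|_U$ is $(1\pm\eps)$-biLipschitz. By the area formula on the $N$-rectifiable set $U$, we get $Jv\ge(1-\eps)^N>0$ a.e.\ on $U$. Since $Jv=|\det A|\,Ju$, also $Ju>0$ a.e.\ on $U$. Hence
\[
\haus^N(Z\cap B_s(y))\le\eps\,\haus^N(B_s(y)).
\]

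\emph{Step 2: good balls at a.e.\ point and small scale.} Fix $y\in\mathcal R\cap B_r(x)$, which is a full-measure set. Since the tangent cone at $y$ is $\setR^N$, for every small $s$ the ball $B_{16s}(y)$ is $(N,\delta')$-symmetric. The existence of a good map from Theorem \ref{thm:biholder} is not enough by itself: I need it to be an affine image of the given $u$. For this I would invoke the transformation-matrix theorem of Brué--Naber--Semola (following Cheeger--Jiang--Naber), whose hypothesis is that $u$ is a $\delta$-splitting map with $\delta<\delta_{K,N}$. It gives, for every $s$ small and every $y$ in the regular part, a lower-triangular matrix $T_{y,s}$ such that $T_{y,s}(u-u(y))$ is an $\eps$-splitting map on $B_s(y)$. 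Running the proof of Theorem \ref{thm:biholder}(ii), which only uses the $\eps$-splitting property together with almost-Euclidean volume, with $v=T_{y,s}(u-u(y))$ then produces the good set $U$ required in Step 1.

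\emph{Conclusion.} Steps 1 and 2 give that the upper density of $Z$ is at most $\eps$ at $\haus^N$-a.e.\ point of $B_r(x)$. By the Lebesgue density theorem, $Z$ has density $1$ at a.e.\ of its own points, so $\haus^N(Z)=0$. Consequently each $Z_i$ is null, and the lemma follows.

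\emph{Main obstacle.} The delicate step is Step 2, namely producing good maps that are affine images of the fixed $u$ on arbitrarily small balls around a.e.\ point. I expect this to require the BNS/CJN transformation theorem in the non-collapsed $\RCD$ setting, with its constraint $\delta<\delta_{K,N}$. If that route fails, the fallback is a blow-up argument. At a regular point that is also a Lebesgue point of $G$ and of $|\Hess u|^2$, the rescalings $(u-u(y))/s$ converge to a linear map $L$ on $\setR^N$ with $L^{\mathsf T}L=G(y)$. One would then use the $L^1$-closeness of $G$ to the identity on $B_r(x)$ (from the splitting property) together with a propagation of nondegeneracy across scales to rule out $\det G(y)=0$.
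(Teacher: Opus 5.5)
Your proof is correct, and its skeleton is the same as the paper's. Lebesgue points reduce everything to showing that a degeneracy set is null. The Bru\'e--Naber--Semola/Cheeger--Jiang--Naber transformation theorem gives, at small scales around a.e.\ point, an invertible affine image $v$ of $u$ that is an $\eps$-splitting map. A density-point argument then concludes. The real difference is how you bound the relative measure of the bad set in $B_s(y)$.

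The paper reads this off directly from condition (iii) of Definition \ref{def:deltasplitting} for $v$. Where the Gram matrix is degenerate it is far from the identity, so smallness of $\fint_{B_s(y)}|\nabla v_i\cdot\nabla v_j-\delta_{ij}|$ caps the proportion by a multiple of $\eps$. You instead use two heavier ingredients:
\begin{itemize}
\item the biLipschitz-on-a-large-set conclusion of Theorem \ref{thm:biholder}(ii) for your particular $v$, which requires the stronger form of the canonical Reifenberg theorem, valid for every splitting map on an almost Euclidean ball, not only the existence statement quoted in the paper;
\item an area-formula step that tacitly identifies $(\det(\nabla v_i\cdot\nabla v_j))^{1/2}$ with the metric Jacobian of $v|_U$.
\end{itemize}
Both facts hold in non-collapsed $\RCD$ spaces, for instance via Cheeger's identity $|\nabla f|=\mathrm{lip}\,f$ a.e.\ together with blow-ups at density points of $U$. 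However, they are much more than needed, and the second is not justified in your write-up.

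Your reduction to $Z=\{\det G=0\}$ via Hadamard's inequality is genuinely valuable. Since $G_v=AG_uA^{\mathsf T}$, the set $Z$ is invariant under the transformation matrix. The componentwise critical set $\mathcal C(u)$ used in the paper is not invariant, because a lower-triangular $T_{y_0,s}$ mixes $\nabla u_1$ into $\nabla v_2$. So the paper's identity $\mathcal C(u)=\mathcal C(v)$ is really only available as the inclusion $\mathcal C(u)\subset Z$ up to null sets, which is exactly your Hadamard step.

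Combining the two routes gives the cleanest argument. On $Z$ the matrix $G_v$ is singular, so $\|G_v-I\|_{\mathrm{op}}\ge 1$ and hence $\sum_{i,j}|\nabla v_i\cdot\nabla v_j-\delta_{ij}|\ge 1$. This yields $\haus^N(Z\cap B_s(y))\le N^2\eps\,\haus^N(B_s(y))$ directly from the splitting condition. Your Step 1, the rerun of Theorem \ref{thm:biholder}, and the blow-up fallback then become unnecessary.
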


\begin{proof}
First of all, since $|\nabla u_i|\in  L^2(B_r(x))$, then the classical Lebesgue theorem yields that $\haus^N$-a.e.\ $y\in B_r(x)$ is a Lebesgue point for $u_i$, for all $i=1, \dots, N$.
Denote 
\begin{equation}
\mathcal{C}(u):=\left\{ y\in B_r(x) \mid  \lim_{s \downarrow 0} \fint_{B_s(y)} |\nabla u_i|^2\, \di\meas= 0, \quad \text{for some }i=1,\dots, N\right\}.
\end{equation}
Assume by contradiction that $\haus^N(\mathcal{C}(u))>0$. Pick $y_0\in B_r(x)$ a Lebesgue density point for $\mathcal{C}(u)$.
Thanks to the transformation theorem (see \cite[Proposition 3.13]{BrueNaberSemola20} for a proof in the $\RCD$ setting, after \cite{CJN:21}), we know that for every $s>0$,  there exists an invertible matrix $T_{y_0,s}$ such that the function $v=v_{y_0,s}:=T_{y_0,s}u$ is an $\eps$-splitting map, for $\eps= 1/(2N)$. Since $T_{y_0,s}$ is invertible, then $\mathcal{C}(u)=\mathcal{C}(v)$. Therefore

\begin{align*}
\frac{1}{2}= N \eps  & \geq  \frac{1}{N}\sum_{i,j=1}^N \fint_{B_s(y_0)} |\nabla v_i\cdot \nabla v_j - \delta_{ij}|\, \di \haus^N \\
&\geq \frac{\haus^N(\mathcal{C}(v))}{\haus^N(B_s(y_0))}
= \frac{\haus^N(\mathcal{C}(u))}{\haus^N(B_s(y_0))}.
\end{align*}
For $s>0$ small, this contradicts that $y_0$ is a Lebesgue density point for $\mathcal{C}(u)$.
\end{proof}

\begin{lemma}\label{lemma:splittingatsmallscale}
Let $(X,\dist,\meas)$ be an $\RCD(-\delta(N-1), N)$ metric measure space and let $u:B_1(x)\to\setR^k$ be a harmonic map. Assume that $x$ is a regular point for $u$. Then for any $\eps>0$ there exist $r_0=r_0(\eps)>0$ and an invertible $k\times k$ matrix $A$ such that $A\circ u:B_r(x)\to\setR^k$ is an $\eps$-splitting map for any $r\in (0,r_0)$.
\end{lemma}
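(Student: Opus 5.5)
The plan is to reduce the three conditions of Definition \ref{def:deltasplitting} for $A\circ u$ to two facts. First, the scale-invariant Hessian energy of $u$ tends to zero at $x$. Second, the Gram matrix of $\nabla u$ stabilizes at $x$ to a positive definite limit $G$. Then I would take $A:=G^{-1/2}$.

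\textbf{Step 1 (Hessian decay).} I would use the improved Bochner inequality for harmonic functions on $\RCD(-\delta(N-1),N)$ spaces,
\[
\tfrac12\Delta|\nabla u_i|^2\ \ge\ |\Hess u_i|^2-\delta(N-1)|\nabla u_i|^2,
\]
tested against a good cut-off $\phi_r$ on $B_{2r}(x)$ with $|\Delta\phi_r|\le C/r^2$; such cut-offs exist on $\RCD$ spaces. Since $\Delta \phi_r$ has zero integral, integration by parts lets me replace $|\nabla u_i|^2$ by $|\nabla u_i|^2-c_i$ for any constant $c_i$. This gives
\[
r^2\fint_{B_r(x)}|\Hess u_i|^2\ \le\ C\fint_{B_{2r}(x)}\big||\nabla u_i|^2-c_i\big|+C\delta r^2\fint_{B_{2r}(x)}|\nabla u_i|^2 .
\]
I choose $c_i:=\lim_{s\downarrow0}\fint_{B_s(x)}|\nabla u_i|^2>0$. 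Because $x$ is a Lebesgue point of $|\nabla u_i|^2$, the right-hand side tends to $0$ as $r\downarrow 0$. Hence $r^2\fint_{B_r(x)}|\Hess u_i|^2\to0$. Any linear combination $(A\circ u)_j$ satisfies the same bound with a constant depending on $|A|$, and it stays harmonic. The gradient bound in (i) follows from the local Cheng--Yau type gradient estimate for harmonic functions on $\RCD$ spaces on a slightly smaller ball, after multiplying by the fixed matrix $A$. If necessary I would shrink $r_0$ and rescale the constant.

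\textbf{Step 2 (Gram matrix).} Set $g_{ij}:=\nabla u_i\cdot\nabla u_j$ and $G_r:=\big(\fint_{B_r(x)}g_{ij}\big)_{ij}$. Since $|\nabla g_{ij}|\le|\Hess u_i||\nabla u_j|+|\Hess u_j||\nabla u_i|$ and $|\nabla u|$ is locally bounded, the $(1,1)$-Poincaré inequality together with Step 1 gives
\[
\fint_{B_r(x)}|g_{ij}-(G_r)_{ij}|\ \le\ Cr\fint_{B_{r}(x)}|\nabla g_{ij}|\ \le\ C\Big(r^2\fint_{B_r(x)}|\Hess u|^2\Big)^{1/2}\longrightarrow 0 .
\]
Comparing $B_r$ with $B_{r/2}$ via doubling gives $|G_r-G_{r/2}|\to0$. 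By polarization, $\nabla u_i\cdot\nabla u_j=\tfrac12\big(|\nabla(u_i+u_j)|^2-|\nabla u_i|^2-|\nabla u_j|^2\big)$. Applying Step 1's estimate to the harmonic function $u_i+u_j$ shows that its gradient square also has vanishing mean oscillation at $x$. From this I would extract convergence $G_r\to G$, with diagonal entries $c_i>0$. If needed, I would get there by passing to a subsequence of scales and then using the oscillation bound to control all intermediate scales.

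\textbf{Step 3 (conclusion).} With $A=G^{-1/2}$, condition (iii) for $A\circ u$ reads
\[
\fint_{B_r(x)}\big|(A\,g\,A^{T})_{ij}-\delta_{ij}\big|\ \le\ |A|^2\Big(\fint_{B_r(x)}|g-G_r|+|G_r-G|\Big),
\]
which tends to $0$. Condition (ii) comes from Step 1, and (i) from the remark there. Choosing $r_0$ so that all three quantities are below $\eps$ for $r<r_0$ finishes the proof.

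\textbf{Main obstacle.} The critical point is that $G$ must be \emph{invertible}. Regularity of each component separately only gives $G_{ii}>0$; it does not exclude, for example, $u_1=u_2$, in which case $G$ is singular and no invertible $A$ can work. So I expect to need a genuine independence input. In the applications, $u$ comes from a $\delta$-splitting map on a larger ball, and I would use the transformation theorem of \cite[Proposition 3.13]{BrueNaberSemola20} to guarantee that the normalizing matrices $T_{x,s}$ stay bounded as $s\downarrow0$ at such points. Alternatively, I would read the notion of regular point as including $\det G>0$. I would first try to derive nondegeneracy from the transformation theorem. A secondary delicate point is the existence of the limit $G$ rather than mere subsequential limits; for this I would use the $\mathrm{BMO}$-type control from Step 2, applied uniformly across dyadic scales.
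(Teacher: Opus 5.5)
Your Steps 1 and 3 follow the paper's argument. The paper also gets (i) from harmonicity and Cheng--Yau, and (ii) by testing the improved Bochner inequality against a cut-off with $|\Delta\varphi|\le C(N)/r^2$ after subtracting a constant from the squared gradient. You subtract the Lebesgue value $c_i$ of $|\nabla u_i|^2$; the paper subtracts $1$ using (iii) for $Au$; both work. It gets (iii) by normalizing with the limiting Gram matrix.

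The obstacle you flag is genuine and cannot be removed. It is a defect of the statement: with the paper's componentwise definition of ``regular point for $u$'', the lemma is false. Take $X=\setR^2$ with $\Leb^2$ (an $\RCD(0,2)$ space), $k=2$ and $u(y)=(y_1,y_1)$. Both $|\nabla u_i|^2\equiv 1$, so $0$ is regular for $u$. But for every $A$ one has $\nabla(Au)_1=\alpha e_1$ and $\nabla(Au)_2=\beta e_1$. Then (iii) with $\eps<1/2$ would require $|\alpha^2-1|,|\beta^2-1|,|\alpha\beta|<\eps$, which is impossible.

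The paper's ``standard linear algebra'' silently assumes that $x$ is a Lebesgue point of the full Gram matrix $g=(\nabla u_i\cdot\nabla u_j)_{ij}$ with positive definite value $G$, and then takes $A=G^{-1/2}$. This assumption is also necessary. Suppose $A_\eps u$ is $\eps$-splitting on all $B_r(x)$, $r<r_0(\eps)$. Then $G_r=A_\eps^{-1}(I+E_r)A_\eps^{-T}$ with $|E_r|\le C(k)\eps$. Hence $|A_\eps^{-1}|^2\le 2|G_r|$, which is bounded since $\mathrm{tr}\,G_r\to\sum_i c_i$. So $|G_r-G_{r'}|\le C\eps$ for $r,r'<r_0(\eps)$, which means $G_r$ converges to an invertible limit. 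The oscillation estimate in your Step 2 is correct. Combined with the above, it shows that under the stated hypotheses the conclusion holds if and only if $x$ is a Lebesgue point of $g$ with $\det G>0$.

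Neither of your repairs closes the proof as stated. The transformation theorem does not apply to an arbitrary harmonic map; the example above is not a splitting map. Even for a $\delta$-splitting map, it only compares consecutive scales and lets $|T_{x,s}|$ grow as $s\downarrow 0$. So it cannot keep $T_{x,s}$ bounded at a prescribed point. It gives nondegeneracy only almost everywhere, by running the density-point argument of Lemma \ref{lem:CriuNegl} on the set $\{\det g=0\}$, where $\sum_{i,j}|(TgT^{T})_{ij}-\delta_{ij}|\ge 1$ for every invertible $T$.

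Your dyadic argument for the existence of $G$ also fails. The facts $\fint_{B_r(x)}|g-G_r|\to 0$ and $|G_r-G_{r/2}|\to 0$ are both compatible with an off-diagonal entry of $G_r$ drifting like $\tfrac12\sin(\log\log(1/r))$.

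The right fix is your second option: define $x$ to be regular for $u$ when it is a Lebesgue point of $g$ with $\det G>0$. Then your proof and the paper's go through, and your Step 2 becomes superfluous. The change must be propagated to Lemma \ref{lem:CriuNegl}, whose proof does work for $\{\det g=0\}$. The identity $\mathcal C(u)=\mathcal C(v)$ used there holds for this set, since $\det(TgT^{T})=\det(T)^2\det g$. It fails for the componentwise set: take $u$ as above and $v=(u_1-u_2,u_2)$.
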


\begin{proof}
The assumption that $x$ is a regular point for $u$ combined with standard linear algebra ensures that there exists an  invertible $k\times k$-matrix $A\in {\rm GL}(k)$ with the following property:  for every $\eps>0$ there exists $r_0=r_0(x,\eps)>0$ such that
\begin{equation}\label{eq:fintAGraduieps}
\fint_{B_r(x)} |\nabla(A u)_i \cdot \nabla(Au)_j-\delta_{ij}| \, \di \meas < \eps, \quad \text{for all $i,j=1,\ldots, k$ and all } r\in (0,r_0),
\end{equation}
yielding condition (iii) in \autoref{def:deltasplitting}. Condition (i) in \autoref{def:deltasplitting} holds as $\Delta (Au)=A (\Delta u)\equiv 0$, since by assumption $u$ is harmonic; moreover, the Cheng-Yau gradient estimate (see \cite[Theorem 1.6]{LocalLi-YauRCD} for the $\RCD$ setting) gives $|\nabla (Au)_i|\leq C(N)$ on $B_{r_0}(x)$, for all $i=1,\ldots, k$. Condition (ii) in \autoref{def:deltasplitting} follows from conditions (i) and (iii) thanks to the \emph{improved Bochner inequality}  (see \cite[Theorem 3.3]{Han:RicciTensor}, after \cite{Sav:SelfImprov, Gigli:nonsmoothDG}). Let us briefly recall the argument for the reader's convenience. 
From \cite{AMS:JGA, MondinoNaber19}, we know that there exists a regular cut-off function $\varphi:X\to [0,1]$, with
\begin{equation}\label{eq:cutoff}
  \varphi\equiv 1 \text{ on } B_r(x), \; \varphi\equiv 0 \text{ on } X\setminus B_{2r}(x),\;|\Delta \varphi|\leq \frac{C(N)}{r^2}.
\end{equation}
From the improved Bochner inequality applied to $(Au)_i$, with test $\varphi$, we obtain
\begin{align*}
   \frac{1}{2} \fint_{B_{2r}(x)} |\nabla (Au)_i|^2\, \Delta \varphi \, \di\meas \geq  & \fint_{B_{2r}(x)} |{\rm  Hess} (Au)_i|^2 \, \varphi \, \di\meas \\
    & - \delta(N-1)  \fint_{B_{2r}(x)} |\nabla (Au)_i|^2\, \varphi \, \di\meas.
\end{align*}
Using the local doubling of the measure, \eqref{eq:fintAGraduieps} and \eqref{eq:cutoff}, we get
\begin{align*}
\fint_{B_{r}(x)} |{\rm  Hess} (Au)_i|^2  \di\meas & \leq C(N)\delta +  \frac{1}{2} \fint_{B_{2r}(x)} \left(|\nabla (Au)_i|^2-1\right)\, \Delta \varphi \, \di\meas\\
&\leq C(N)\delta + \varepsilon \frac{C(N)}{r^2},
\end{align*}
yielding condition (ii) in \autoref{def:deltasplitting}.
\end{proof}

We are now ready to state and prove the main result of this section.

\begin{proposition}\label{prop:BilipPartition}
For every $N\in \setN, N\geq 1$, there exists $\delta(N)>0$ with the following property. Let $(X,\dist,\haus^N)$ be a non-collapsed $\RCD(-(N-1),N)$ metric measure space. Let  $u:B_1(x)\to\setR^N$ be a $\delta$-splitting map, for some $\delta<\delta(N)$. Then there exists a countable family of disjoint Borel sets $U_n\subset B_1 (x)$ with the following properties:
\begin{itemize}
\item[i)] $\haus^{N}\left(B_1(x)\setminus \bigsqcup_{n\in \setN}U_n\right)=0$;
\item[ii)]  the restriction $u|_{U_n}:U_n\to \setR^N$ is biLipschitz to its image, for every $n\in\setN$. 
\end{itemize}
\end{proposition}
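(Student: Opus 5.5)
The plan is to combine Lemma \ref{lem:CriuNegl}, Lemma \ref{lemma:splittingatsmallscale} and the quantitative biLipschitz property of splitting maps in Theorem \ref{thm:biholder}, applied at small scales around $\haus^N$-a.e.\ point. By Lemma \ref{lem:CriuNegl}, $\haus^N$-a.e.\ $y\in B_1(x)$ lies in $\mathcal R(u)$. Moreover, by \eqref{eq:SingCod>1}, or more simply because $\haus^N$-a.e.\ point of a non-collapsed $\RCD$ space is regular with Euclidean tangent cone, we may also assume $y\in\mathcal R$. For such $y$, Lemma \ref{lemma:splittingatsmallscale} (applied after rescaling, so that the lower curvature bound becomes $-r^2(N-1)$, which is as small as we like) gives an invertible matrix $A_y$ and $r_y>0$ such that $A_y\circ u:B_r(y)\to\setR^N$ is an $\eps$-splitting map for all $r<r_y$. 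Since $y$ is regular, we can also shrink $r_y$ so that $B_{16r}(y)$ is $(N,\delta)$-symmetric.

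At each such $y$ and scale $r<r_y$, the plan is to get a closed set $U\subset B_r(y)$ with $\haus^N(B_r(y)\setminus U)\le \eps\, \haus^N(B_r(y))$ on which $A_y\circ u$ is $(1\pm\eps)$-biLipschitz. Theorem \ref{thm:biholder} (ii) produces such a set, but for \emph{some} $\eps$-splitting map, not necessarily ours. The main obstacle is to make sure the biLipschitz set is attached to the given map $A_y u$. I would handle this by noting that the proof in \cite{BrueNaberSemola20, CJN:21} of item ii) works for any $\eps$-splitting map on a ball with almost Euclidean volume. The argument defines $U$ as the set of points where $A_y u$, after the transformation theorem, stays $\eps$-splitting at every smaller scale; the complement is controlled via a maximal function estimate on $|\Hess|$ and on $|\nabla u_i\cdot\nabla u_j-\delta_{ij}|$, together with Theorem \ref{splitting vs isometry}(ii). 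I would cite it in that form. Since $A_y$ is invertible, $u=A_y^{-1}(A_y u)$ is then biLipschitz on $U$ as well, with constants depending on $\|A_y\|,\|A_y^{-1}\|$, which is fine because no uniformity is needed.

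Next comes a Vitali-type exhaustion. Let $G$ be the full-measure set of good points. The family of balls $B_r(y)$, $y\in G$, $r<r_y$, is a fine cover, and $\haus^N$ is doubling. So Vitali's covering theorem yields countably many disjoint balls $B_{r_k}(y_k)$ covering $G$ up to a null set. In each ball we take the corresponding closed set $U^{(1)}_k$, which covers all but an $\eps$-fraction of the ball. The leftover set $B_1(x)\setminus\bigcup_k U^{(1)}_k$ then has measure at most $\eps\,\haus^N(B_1(x))$, and it still consists a.e.\ of good points. Repeating the procedure inside this leftover set (using Lebesgue density to pick balls that are mostly contained in it) and iterating countably many times exhausts $B_1(x)$ up to a null set. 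Finally, we set $U_n$ to be the resulting pieces minus the previously chosen ones, which makes them disjoint Borel sets. Each $U_n$ sits inside a single biLipschitz set, so $u|_{U_n}$ is biLipschitz onto its image. This gives i) and ii).
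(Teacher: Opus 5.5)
Your argument is correct, but it produces the biLipschitz pieces by a different route than the paper.

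Both proofs start from Lemma~\ref{lem:CriuNegl} and Lemma~\ref{lemma:splittingatsmallscale}. From there the paper never uses item ii) of Theorem~\ref{thm:biholder}. It selects the matrices $y\mapsto A_y$ measurably and applies Lusin's theorem to get compact sets $E_k$ of almost full measure on which $A_y$ is uniformly continuous. Then on each small ball $B_{s_k}(y_i)$ the \emph{single} map $A_{y_i}\circ u$ is a $2\delta$-splitting map at every point of $E_k\cap B_{s_k}(y_i)$ and every small scale. Theorem~\ref{splitting vs isometry}(ii) turns this into an $\eps s$-GH isometry at all these locations and scales, which gives the biLipschitz bounds directly. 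The partition is just $E_n\setminus\bigcup_{k<n}E_k$.

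You keep $A_y$ pointwise and instead import the canonical-Reifenberg statement of Cheeger--Jiang--Naber and Bru\'e--Naber--Semola: \emph{any} $\eps$-splitting map on an almost-Euclidean ball is $(1+\eps)$-biLipschitz on a closed set of almost full measure. You then reach full measure by a Vitali exhaustion. It converges because each round only uses balls in which the leftover set has density close to $1$.

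You were right to flag that Theorem~\ref{thm:biholder} as written only asserts the existence of \emph{some} such map. The version for a given splitting map is how the result is stated and proved in the cited literature, so the citation is legitimate. It is, however, a heavier input: its maximal-function argument is precisely what the paper's Lusin step replaces. In exchange, your route needs no measurable selection or uniformization in $y$, because Vitali only uses the pointwise radii $r_y$. The paper's step, by contrast, also tacitly requires the scale $r_0(y)$ of Lemma~\ref{lemma:splittingatsmallscale} to be bounded below on $E_k$.

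One minor point: \eqref{eq:SingCod>1} is not a hypothesis of this proposition. So only your second justification for $y\in\mathcal R$ applies, namely a.e.\ regularity of non-collapsed $\RCD$ spaces.
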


\begin{proof}
Let us start with an elementary observation. If $A$ is an invertible $N\times N$ matrix and $A\circ u$ is biLipschitz on $K\subset B_1(x)$, then also $u$ is biLipschitz on $K$ (where of course the biLipschitz constants might be worsened).
\medskip

A second key observation is that if a map $v$ is an $\eps$-GH approximation at all locations and scales when restricted to a set $K$, then for $\eps$ sufficiently small the map is biLipschitz. This is the key remark to obtain rectifiability in \cite{MondinoNaber19} and subsequent works about rectifiability on $\RCD$ spaces. 
\medskip

By \autoref{lem:CriuNegl}, we know that there exists a set $\mathcal{N}\subset B_1(x)$ with $\haus^N(\mathcal{N})=0$ such that every $y\in B_1(x)\setminus \mathcal{N}$ is regular for $u$. From \autoref{lemma:splittingatsmallscale} it follows that for every $y\in B_1(x)\setminus \mathcal{N}$, there exists $r_0>0$ and an invertible matrix $A_y$ such that 
 $A_y\circ u:B_s(y)\to \setR^N$ is a $\delta$-splitting map for every $s\in (0,r_0]$. From the proof of \autoref{lemma:splittingatsmallscale}, the map $B_1(x)\setminus \mathcal{N}\ni y\mapsto A_y\in {\rm GL}(N)$ can be selected to be measurable.  By Lusin Theorem, we know that for every $k>0$, there exists a compact subset $E_{k}\subset B_{1}(x)$ with 
 \begin{equation}\label{eq:E_k>1/k}
 \haus^N(E_{k})\geq \left(1-\frac{1}{k}\right)\,\haus^N(B_{1}(x)),
 \end{equation}
 such that the map $y\mapsto A_y\in {\rm GL}(N)$ is uniformly continuous. Thus there exist $s_k=s_k(\delta)>0, I_k=I_k(\delta) \in \setN$, and finitely many points $y_1,\ldots, y_{I_k}\in E_k$, such that 
 \begin{itemize}
 \item $E_k\subset \bigcup_{i=1,\ldots, I_k} B_{s_k}(y_i)$;
\item $A_{y_i}\circ  u: B_s(y)\to \setR^N$ is a $2\delta$-splitting map for every $s\in (0,s_k)$ and  $y\in B_{s_k}(y_i)\cap E_{k}$,  $i=1,\ldots, I_k$.
\end{itemize}
Let us stress that, in the last reduction, we gained that  the matrix $A_{y_i}$ is \emph{constant} for all $y\in B_{s_k}(y_i)\cap E_{k}$. \autoref{splitting vs isometry}(ii) yields that $A_{y_i}\circ  u: B_s(y)\to \setR^N$ is a $s\eps=s\eps(\delta)$-GH isometry for every $s\in (0,s_k)$ and for every $y\in B_{s_k}(y_i)\cap E_{k}$, with $\lim_{\delta \to 0}\eps(\delta)= 0$.   By the second key observation at the beginning of the proof, we infer that $A_{y_i}\circ  u: B_{s_k}(y_i)\cap E_{k}\to \setR^N$  is biLipschitz onto its image, for every $i=1,\ldots, I_k$. Since $A_{y_i}$ is invertible, then $u: B_{s_k}(y_i)\cap E_{k}\to \setR^N$ is biLipschitz onto its image, for every $i=1,\ldots, I_k$. Recalling that $E_k\subset \bigcup_{i=1,\ldots, I_k} B_{s_k}(y_i)$, we get that $u: E_{k}\to \setR^N$ is biLipschitz onto its image. We conclude by  setting $U_1=E_1$ and  $U_n=E_n\setminus \bigcup_{k=1,\ldots, n-1} E_k$, and by recalling \eqref{eq:E_k>1/k}.
\end{proof}

\subsection{Some regularity properties for metric measure spaces}\label{Sec:RegPropmms}

We now state the properties, Definitions \ref{def:assumptions}-\ref{def:assumptions2}, that a metric measure space has to satisfy for the proofs of Proposition \ref{prop:locrep}  and Corollary 
\ref{cor-2orientations} to hold.
Then in Proposition \ref{prop-ex-assms} we establish that non-collapsed $\RCD$ spaces and non-collapsed strong Kato limits satisfy these properties.

\begin{definition}\label{def:assumptions}(ETR)
	Let $(X,\dist,\haus^N)$ be a metric measure space, where $1\le N<\infty$ is a natural number, such that  every metric ball $B_r(p)$ is connected.  Fix $\varepsilon >0$. We say that  $p\in X$  is \emph{regular} if for every $\varepsilon>0$ there exists $r>0$ such that the following holds. 
 \begin{itemize}
\item There exists a homeomorphism $u:B_r(p)\to u(B_r(p))\subset \setR^{N}$ verifying the following properties:
	\begin{itemize}
		\item[i)] for any $x,y\in   B_r(p)$ it holds that \begin{equation}\label{eq:holderestD}
		  \abs{u(x)-u(y)}\le (1+\eps) r \, \dist(x,y)\, ;
		\end{equation}
		\item[ii)] $u(B_r(p))\supset B_{(1-2\eps)r}^{\setR^{N}}(0)$;
		\item[iii)] the following volume estimate holds: 
        \begin{equation}\label{eq:sharpahlforsD}
(1-\eps)\omega_{N}r^N\le\haus^{N}(B_r(p))\le (1+\eps)\omega_{N}r^N\, . 
\end{equation}
	\end{itemize}
\item There exists a countable family of disjoint Borel sets $U_i\subset B_r(p)$ such that:
\begin{itemize}
\item[i)] $\haus^{N}\left(B_r(p)\setminus \cup_{i\in \setN}U_i\right)=0$;
\item[ii)]  the restriction map $u|_{U_i}$ is biLipschitz from $U_i$ to $u(U_i) \subset \setR^N$, for all $i\in\setN$.
\end{itemize}
 \end{itemize}
 We say that 
$(X,\dist,\haus^N)$ satisfies  (ETR) (for \emph{essential topological regularity}) if there exists a  subset ${\mathcal R} \subset X$ such that $\haus^N(X\setminus {\mathcal R})=0$ and each point $p\in {\mathcal R}$ is regular. 

Let $(X,\dist,\haus^N)$ satisfy  (ETR). We say that $(X,\dist,\haus^N)$ \emph{has no boundary in the (ETR) sense} if 
\begin{equation}\label{eq:SingCod>1ETR}
{\rm Codim_{Hauss}}(X\setminus \mathcal R)>1,
\end{equation} 
where ${\rm Codim_{Haus}}$ denotes the Hausdorff codimension.
	\end{definition}
    Notice that the notion of having no boundary for a space satisfying (ETR) is compatible with the analogous notion for non-collapsed $\RCD$ spaces, see \eqref{eq:SingCod>1}.

 If $p\in X$ is regular, we can assume that $B_r(p)$ has an atlas $(f_i,K_i)_{i  \in \mathbb N}$
 as in Definition \ref{def-charts}, where $f_i: K_i \to X$
 equals $u^{-1}|_{u(U_i)}$ and these restrictions are biLipschitz maps.

\begin{definition}[a.e.-connected regular set]
\label{def-weaklyConv}
    Let $(X,\dist,\haus^N)$ satisfy (ETR). We say that the regular set ${\mathcal R} \subset X$  is \emph{a.e.-connected} if  for all $p \in \mathcal R$ and $\mathcal H^N$--a.e.\ $q\in \mathcal R$, with exceptional set possibly depending on $p$, there exists a continuous curve joining $p$ to $q$ and which is entirely contained in $\mathcal R$.
\end{definition}

\begin{definition}\label{def:assumptions2}(LDB)
	Let $(X,\dist,\haus^N)$ be a metric measure space, where $1\le N<\infty$ is a natural number. We say that $(X,\dist,\haus^N)$ satisfies (LDB) (for \emph{lower density bound}) if
 \begin{equation}\label{eq:ASSM2}
 \liminf_{r\to 0^+} \frac{\haus^N(B _r(x))}{r^N}>0, \quad \text{for all } x\in X.
 \end{equation}
\end{definition}

\begin{lemma}\label{lem:SuffASSM2}
Let  $(X,\dist,\haus^N)$ be a metric measure space.  Assume that
\begin{itemize}
\item there exists a dense set $E\subset X$, 
\item for every point  $x_0\in X$ there exist positive constants $r_{x_0}, c_{x_0}>0$ and    a function $o_{x_0}(\cdot):(0, r_{x_0})\to \setR$ with $\limsup_{t\to 0^+} \frac{o_{x_0}(t)}{t}=0$,
\end{itemize}
satisfying the following properties:  
\begin{equation}\label{eq:LowerHalfors}
\haus^{N}(B_r(p))\geq c_{x_0}\,  r^N, \quad \text{for all } p\in E\cap B_{r_{x_0}}(x_0) \text{ and all } r\in (0,r_{x_0}); 
\end{equation}
\begin{equation}\label{eq:ApproxBG}
(0,r_{x_0}) \ni r\to \frac{\haus^N(B_{r}(x_0))}{r^N+o_{x_0}(r^N)} \quad \text{is monotone non-increasing}.
\end{equation}
Then $(X,\dist,\haus^N)$ satisfies (LDB).
\end{lemma}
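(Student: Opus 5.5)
The plan is to fix an arbitrary $x_0\in X$ and prove the bound required by (LDB) in two steps. First, the lower bound \eqref{eq:LowerHalfors}, which is only assumed at points of the dense set $E$, will be transferred to $x_0$ at every small scale. Second, the monotonicity \eqref{eq:ApproxBG} will be used to upgrade this scale-by-scale bound to a statement about the density itself: the limit of $\haus^N(B_r(x_0))/r^N$ as $r\to 0^+$ exists and is strictly positive, which is stronger than the $\liminf$ in \eqref{eq:ASSM2}. Throughout I use $r_{x_0}$, $c_{x_0}$ and $o_{x_0}$ as given by the hypotheses at $x_0$.

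\textbf{Step 1 (transfer from the dense set).} Let $r\in(0,r_{x_0})$. Since $E$ is dense, there is $p\in E$ with $\dist(p,x_0)<r/2$. Then $p\in E\cap B_{r_{x_0}}(x_0)$ and $r/2\in(0,r_{x_0})$, so \eqref{eq:LowerHalfors} applies at $p$ with radius $r/2$. By the triangle inequality $B_{r/2}(p)\subset B_r(x_0)$. Hence
\[
\haus^N(B_r(x_0))\ \ge\ \haus^N(B_{r/2}(p))\ \ge\ c_{x_0}2^{-N}r^N ,\qquad r\in(0,r_{x_0}).
\]
This already gives $\liminf_{r\to0^+}\haus^N(B_r(x_0))/r^N\ge c_{x_0}2^{-N}>0$, i.e.\ \eqref{eq:ASSM2} at $x_0$.

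\textbf{Step 2 (existence of the density via \eqref{eq:ApproxBG}).} Put $F(r):=\haus^N(B_r(x_0))/(r^N+o_{x_0}(r^N))$. The hypothesis $\limsup_{t\to0^+}o_{x_0}(t)/t=0$, read with $t=r^N$, gives $o_{x_0}(r^N)/r^N\to0$. Hence the denominator equals $r^N(1+o(1))$, which is positive for $r$ small, so $F$ is well defined there. Being monotone non-increasing, $F$ has a limit $\Theta\in(0,\infty]$ as $r\downarrow0$. Then
\[
\frac{\haus^N(B_r(x_0))}{r^N}=F(r)\Bigl(1+\frac{o_{x_0}(r^N)}{r^N}\Bigr)\longrightarrow\Theta .
\]
By Step 1, $\Theta\ge c_{x_0}2^{-N}$. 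So the density exists and is positive, with a lower bound controlled locally by $c_{x_0}$. Since $x_0$ was arbitrary, $(X,\dist,\haus^N)$ satisfies (LDB).

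The main point to handle carefully is Step 2. One must check that the denominator $r^N+o_{x_0}(r^N)$ is eventually positive, so that the monotone quantity makes sense. One must also apply the little-$o$ hypothesis at $t=r^N$ rather than at $t=r$. The monotonicity cannot be dropped from the argument: Step 1 only bounds each scale separately, and it is \eqref{eq:ApproxBG} that makes the limit exist, in the same way Bishop--Gromov does for $\RCD$ spaces.
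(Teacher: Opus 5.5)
Your Step 1 is by itself a complete proof of the lemma, and it takes a different route from the paper. The paper invokes \eqref{eq:LowerHalfors} only once, at a point of $E$ within $r_{x_0}/2$ of $x_0$ and at radius $r_{x_0}/2$. This gives a lower bound for $\haus^N(B_{r_{x_0}}(x_0))$ at the single scale $r_{x_0}$, and the paper then uses the monotonicity \eqref{eq:ApproxBG} to push that bound down to all small radii. You instead use the density of $E$ anew at each scale: you pick $p\in E$ with $\dist(p,x_0)<r/2$ and obtain $\haus^N(B_r(x_0))\ge c_{x_0}2^{-N}r^N$ for every $r\in(0,r_{x_0})$ directly. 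Your checks ($p\in B_{r_{x_0}}(x_0)$, $r/2\in(0,r_{x_0})$, $B_{r/2}(p)\subset B_r(x_0)$) are correct.

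What your route buys: \eqref{eq:LowerHalfors} is assumed uniformly in $r\in(0,r_{x_0})$, so the hypothesis \eqref{eq:ApproxBG} is superfluous for (LDB). You also avoid a subtlety in the paper's route. That route must turn a lower bound on the monotone ratio into one on $\haus^N(B_r(x_0))/r^N$, so it needs $1+o_{x_0}(r^N)/r^N$ bounded away from $0$. The literal hypothesis $\limsup_{t\to0^+}o_{x_0}(t)/t=0$ does not provide this. What the paper's route buys: it would still work if the lower bound at points of $E$ were known only at one fixed scale, at the cost of needing a Bishop--Gromov type monotonicity at $x_0$.

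Two side remarks. First, Step 2 is not needed, and as written it has a slip. The condition $\limsup_{t\to0^+}o_{x_0}(t)/t=0$ only controls $o_{x_0}(t)/t$ from above. It gives neither eventual positivity of $r^N+o_{x_0}(r^N)$ nor $o_{x_0}(r^N)/r^N\to0$; for that you need the two-sided condition $\lim_{t\to0^+}o_{x_0}(t)/t=0$, which is presumably what is intended. Second, your closing claim that the monotonicity ``cannot be dropped'' holds only for the existence of the limiting density, which (LDB) does not require. For the lemma as stated, your own Step 1 shows that it can be dropped.
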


\begin{proof}
It is clear that \eqref{eq:ASSM2} is satisfied for all $x\in E$.  Let $x_0\in X\setminus E$. We now prove that \eqref{eq:ASSM2} holds at $x_0$. By assumption, there exists a sequence $(x_j)\subset E\cap B_{r_{x_0}/2}(x_0)$. By the triangle inequality, it holds that $B_{r_{x_0}/2}(x_j)\subset B_{r_{x_0}}(x_0)$  and thus
\begin{equation}\label{eq:HNc2N}
\haus^N(B_{r_{x_0}}(x_0))\geq \haus^N (B_{r_{x_0}/2}(x_j)) \geq \frac{c_{x_0}}{2^N} r_{x_0}^N.
\end{equation}
The lower bound \eqref{eq:HNc2N} combined with the monotonicity \eqref{eq:ApproxBG} gives the claimed \eqref{eq:ASSM2} at $x_0$.
\end{proof}

\begin{proposition}\label{prop-ex-assms}
The following classes of spaces satisfy both (ETR) and (LDB):
\begin{itemize}
\item  Locally non-collapsed $\RCD$ spaces, i.e. $(X,\dist,\haus^N)$ with the following property: for every $x\in X$ there exists $K_x\in \setR$ and a closed neighbourhood $\bar{U}_x$ of $x$ such that $(\bar{U}_x, \dist|_{\bar{U}_x}, \haus^N)$ is an $\RCD(K_x,N)$ space. In particular, non-collapsed $\RCD$ spaces, i.e.\;$(X,\dist,\haus^N)$ is an $\RCD(K,N)$ space for some $K\in \setR$ and $N\in \setN$.

\item Non-collapsed strong Kato limit
spaces, in the sense of Carron-Mondello-Tewodrose (see Definition \ref{def:strongKatoLimit}).
\end{itemize}
Moreover, if $(X,\dist, \haus^N)$ belongs to one of two classes above and has no boundary in the sense of \eqref{eq:SingCod>1ETR}, then $X$ has a.e.-connected regular set (see Definition \ref{def-weaklyConv}).
\end{proposition}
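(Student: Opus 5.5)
We split the proof into the two classes and, for each, check (ETR), (LDB), and finally a.e.-connectedness of the regular set under the no-boundary assumption.

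\emph{Locally non-collapsed $\RCD$ spaces.} For (ETR), we take $\mathcal R$ to be the set of regular points in \eqref{eq-regularset}, which has full $\haus^N$-measure. The definition is local, so we may work inside $\bar U_p$, which is $\RCD(K_p,N)$.

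Fix $p\in\mathcal R$ and $\eps>0$. Since the tangent cone at $p$ is $\setR^N$, for $r$ small the rescaled space $(X,(16/r)\dist)$ is $\RCD(-\delta(N-1),N)$ and $B_{16}(p)$ is $(N,\delta)$-symmetric. Theorem \ref{thm:biholder} then provides an $\eps$-splitting map $u$, defined on a ball homeomorphic to a manifold. From items iii)--v) we obtain:
\begin{itemize}
\item the Lipschitz bound \eqref{eq:holderestD}, after rescaling (the factor $r$ there only reflects the normalization);
\item injectivity, from the Hölder lower bound;
\item the surjectivity property ii), from item iv);
\item the volume estimate \eqref{eq:sharpahlforsD}, from item v).
\end{itemize}
A continuous injective map on a ball homeomorphic to an open subset of $\setR^N$ is a homeomorphism onto its image by invariance of domain. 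The biLipschitz partition $\{U_i\}$ is exactly Proposition \ref{prop:BilipPartition}, applied to the rescaled $\delta$-splitting map. For (LDB), Bishop--Gromov in the non-collapsed setting gives $\haus^N(B_r(x))/v_{K,N}(r)$ non-increasing together with a positive lower bound at some fixed scale. Hence the liminf in \eqref{eq:ASSM2} is at least this positive constant; alternatively, one can apply Lemma \ref{lem:SuffASSM2} with $E=\mathcal R$.

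\emph{Strong Kato limits.} Here we rely on the structure theory of Carron--Mondello--Tewodrose \cite{CarronMondelloTewodrose2023}, which supplies the following:
\begin{itemize}
\item a Reifenberg-type statement: points whose tangent cone is $\setR^N$ have neighbourhoods homeomorphic to balls, with volume close to Euclidean;
\item $\haus^N$-a.e.\ regularity;
\item an almost-monotone volume ratio of the form \eqref{eq:ApproxBG}, coming from their almost monotonicity under the Dini-type condition \eqref{eq:f};
\item uniform lower Ahlfors bounds, which give \eqref{eq:LowerHalfors}.
\end{itemize}
Lemma \ref{lem:SuffASSM2} then gives (LDB). For the charts and the biLipschitz partition, we use the CMT result that $X$ is biLipschitz (indeed, after a conformal change) equivalent to a collapsed $\RCD$ space in which harmonic splitting maps exist. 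Repeating the proof of Proposition \ref{prop:BilipPartition} in that space — regular points of harmonic maps, transformation at small scales, Lusin — and transporting back through the biLipschitz map produces the sets $U_i$. This transfer is the step I expect to be the main obstacle, since Lemma \ref{lem:CriuNegl} was proved only for non-collapsed spaces. My first attempt would be to replace it with an a.e.\ non-degeneracy argument based on the $\haus^N$-a.e.\ regularity and the GH-approximation property of splitting maps.

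\emph{A.e.-connectedness.} Assume the no-boundary condition \eqref{eq:SingCod>1ETR}, so that $\haus^{N-1}(X\setminus\mathcal R)=0$. Fix $p\in\mathcal R$. For a.e.\ $q$ we join $p$ to $q$ by a geodesic and perturb it: consider the family of curves obtained by concatenating short segments in charts near $p$ and $q$ with geodesics between nearby points. A Fubini/segment-inequality argument on a codimension-$>1$ set then shows that almost every such curve avoids $X\setminus\mathcal R$. In the $\RCD$ case this uses the Cavalletti--Mondino/Cheeger--Colding segment inequality (a set of zero $\haus^{N-1}$-measure is avoided by a.e.\ geodesic from a.e.\ point). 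In the Kato case we use the analogous statement transported through the biLipschitz comparison, since biLipschitz maps preserve Hausdorff codimension.
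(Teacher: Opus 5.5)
Your treatment of locally non-collapsed $\RCD$ spaces agrees with the paper's. (ETR) comes from Theorem \ref{thm:biholder} plus Proposition \ref{prop:BilipPartition}, and your invariance-of-domain remark is a useful extra detail; (LDB) comes from Bishop--Gromov or Lemma \ref{lem:SuffASSM2}. Your a.e.-connectedness sketch is the geodesic-avoidance argument that the paper imports from \cite[Cor.\ A.8, Prop.\ A.6]{KapMonGT}, including the transfer to the $\RCD(K,N')$ structure $(X,\dist',\meas')$ in the Kato case.

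The genuine gap is (ETR) for non-collapsed strong Kato limits. You leave it open, and the route you propose cannot close it as stated, for three reasons.
\begin{enumerate}
\item $(X,\dist',\meas')$ is a \emph{collapsed} $\RCD(K,N')$ space, as the paper stresses in its introduction. So Lemma \ref{lem:CriuNegl}, Theorem \ref{thm:biholder} and Proposition \ref{prop:BilipPartition} are all unavailable there, and ``an a.e.\ non-degeneracy argument'' is not yet a proof.
\item More seriously, the identity $(X,\dist)\to(X,\dist')$ is biLipschitz only with a constant of size $e^{C(N)}$, coming from $0\le h_i\le C(N)$, which is not close to $1$. Transporting a chart back therefore loses the sharp constants required in (ETR): the $(1+\eps)$-Lipschitz bound \eqref{eq:holderestD} and the inclusion $u(B_r(p))\supset B^{\setR^N}_{(1-2\eps)r}(0)$. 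These are exactly what the third item of Lemma \ref{lem:loc-constant} uses to obtain $|\theta_T|<2$ in Proposition \ref{prop:locrep1}.
\item (ETR) requires a \emph{single} map $u$ that is a homeomorphism with those bounds \emph{and} is biLipschitz on each $U_i$. Lemma \ref{lem:loc-constant} uses both properties for the same $u$: injectivity, to apply Corollary \ref{cor-constant} and make the images $u(U_i)$ disjoint; and biLipschitz pieces, to build the atlas. Your plan takes the homeomorphism from a Reifenberg-type statement and the partition from a different map. Even granting both pieces, they do not assemble into (ETR).
\end{enumerate}

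The missing idea is to work upstairs on the approximating manifolds $(M_i,g_i)$, in their original metrics. The paper obtains (ETR) by passing to the limit the uniform estimates of \cite[Thm.\,5.11 and Rem.\,5.12]{CarronMondelloTewodrose2023}. Because these are proved for $g_i$ itself, the near-Euclidean constants survive pGH convergence for one and the same map, which the biLipschitz comparison cannot deliver. The paper takes (LDB) directly from \cite[Prop.\,5.9]{CarronMondelloTewodrose2023}. Your derivation via Lemma \ref{lem:SuffASSM2} is an acceptable alternative, provided the almost-monotone volume ratio and the lower Ahlfors bound you invoke are indeed available from that work.
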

\begin{proof}
For \textit{Locally non-collapsed $\RCD$ spaces}: (ETR) follows by the structure theory of $\RCD$ spaces, see Theorem \ref{thm:biholder} and Proposition \ref{prop:BilipPartition}. From now on, we will take the regular set $\mathcal{R}$ given in  \eqref{eq-regularset} to be the regular set in (ETR).  (LDB) follows from Lemma \ref{lem:SuffASSM2}: one can take $E=\mathcal{R}$, and the monotonicity \eqref{eq:ApproxBG} is a consequence of Bishop-Gromov theorem (see \cite[Thm\,2.3]{Sturm:II}). It remains to show that  $\mathcal{R}$ is a.e.-connected. Since concatenation of continuous curves gives a continuous curve, by the definition of locally non-collapsed $\RCD$ space without boundary, it suffices to prove that the regular set of a non-collapsed $\RCD$ space without boundary is a.e.-connected. So, let $(X,\dist,\haus^N)$ be an $\RCD(K,N)$ space without boundary. By assumption \eqref{eq:SingCod>1ETR}, $\haus^{N-1}(X\setminus \mathcal R)=0$. The desired property (actually, in this case, the continuous curve can be taken as a geodesic) follows from \cite[Cor.\ A.8]{KapMonGT}.
\medskip

For \textit{Non-collapsed strong Kato limit
spaces} $(X,\dist,\haus^N)$: (LDB) is proved in \cite[Prop.\,5.9]{CarronMondelloTewodrose2023}.  (ETR) follows from passing to the limit the uniform estimates obtained in \cite[Thm.\,5.11 and Rem.\,5.12]{CarronMondelloTewodrose2023}, taking the regular set $\mathcal{R}$ as in \eqref{eq-regularset}. It remains to show that, if $X$ has no boundary, then  $\mathcal{R}$ is a.e.-connected.   From \cite{CarronMondelloTewodrose2023}, we know that $(X,\dist, \haus^N)$ is biLipschitz -- via the identity map -- to an $\RCD(K,N')$ structure over $X$, say $(X, \dist', \meas')$, for some $K\in \setR, \, N'\in [N,\infty)$, with $\meas'\ll \haus^N$. Assumption \eqref{eq:SingCod>1ETR} implies that $\meas'_{-1}(X\setminus \mathcal{R})=0$, where $\meas'_{-1}$ denotes the codimension-one measure made out of $\meas$ (see, for instance, \cite[App.\,A.3]{KapMonGT} and references therein).  Therefore,  \cite[Prop.\,A.6]{KapMonGT} implies that $\mathcal{R}$ is a.e.-connected in $(X, \dist', \meas')$; actually,  the continuous curve can be taken as a geodesic in $(X, \dist')$. Since $\meas'\ll \haus^N$, we conclude that $\mathcal{R}$ is a.e.-connected in  $(X,\dist, \haus^N)$. 
\end{proof}

\section{Proofs of Main Results}\label{sec-ProofsPropCor}

\subsection{Proof of Proposition \ref{prop:locrep} and Corollary \ref{cor-2orientations}}\label{SubSec:Pf1213}
Inspired by Matveev-Portegies {\cite[Theorem 4.1 ii-iv)]{MatveevPortegies17}}, we state and prove the following results, which lead to the proofs of Proposition \ref{prop:locrep} and Corollary \ref{cor-2orientations}. 

We start establishing a useful consequence of the (ETR) and (LDB) conditions. 

\begin{lemma}\label{lem:set}
Let $(X,\dist,\haus^N)$  be a complete  metric measure
space satisfying 
(ETR) and (LDB) for the same set ${\mathcal R}$.  Assume there exists  $T \in 
\LWic{X}{N}$ with $\|T\|=\haus^N$. Then $\set(T)=X$.
\end{lemma}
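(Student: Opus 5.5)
The claim is that $\set(T)=X$. By definition \eqref{eq:defSet}, this means that for every $x\in X$
\[
\liminf_{r\to 0}\frac{\|T\|(B_r(x))}{\omega_N r^N}>0 .
\]
Since $\|T\|=\haus^N$, this quantity equals $\liminf_{r\to0}\haus^N(B_r(x))/(\omega_N r^N)$, which is positive for every $x$ exactly by (LDB), Definition \ref{def:assumptions2}. So the inclusion $X\subset\set(T)$ is immediate, and the reverse inclusion is trivial. The proof is therefore essentially a one-line unwinding of definitions.

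The one point needing care is that $T$ is a Lang--Wenger locally integral current, not an Ambrosio--Kirchheim one, so we must check that $\set(T)$ is meaningful. The definition \eqref{eq:defSet} only involves the mass measure on small balls, so it makes sense verbatim for $T\in\LWic{X}{N}$. To use the structure results (rectifiability of $\set(T)$, and concentration of $\|T\|$ on it, from \cite[Theorem 4.6]{AmbrosioKirchheim00}), we localize. For $x\in X$ and $R>0$, the current $T\rstr B_R(x)$ is, by Proposition \ref{prop-AKvsLW}, an Ambrosio--Kirchheim current with mass measure $\haus^N\rstr B_R(x)$. For points $y$ in the open ball $B_R(x)$ and radii $r$ small enough that $B_r(y)\subset B_R(x)$, the density ratios of $T$ and of $T\rstr B_R(x)$ agree. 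Hence $\set(T)\cap B_R(x)=\set(T\rstr B_R(x))\cap B_R(x)$, and the local and global notions coincide.

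(ETR) will not really be needed beyond the setup. If anything, it serves to note that $\haus^N$ is locally finite, via the upper volume bound in \eqref{eq:sharpahlforsD} at regular points. That bound is implicit anyway in $\|T\|=\haus^N$ with $T$ of locally finite mass.

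I do not expect a genuine obstacle. The only subtle step is the bookkeeping between the Lang--Wenger and Ambrosio--Kirchheim frameworks, which is handled by restricting to balls as above.
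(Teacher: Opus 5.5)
Your proof is correct, and it takes a shorter route than the paper's.

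The paper first localizes with a cutoff $g\in\Lipbs(X)$ so as to work in $\bI_N(X)$. It then uses the volume bound \eqref{eq:sharpahlforsD} from (ETR) to get $\mathcal R\subset\set(T)$. Only afterwards does it invoke (LDB), to pass from $\mathcal R$ to its closure; that closure is all of $X$ because $\haus^N(X\setminus\mathcal R)=0$ while every ball has positive measure.

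You observe that (LDB), as formulated in \eqref{eq:ASSM2}, already holds at every $x\in X$. Together with $\norm{T}=\haus^N$, it verifies the defining condition \eqref{eq:defSet} pointwise, with no reference to $\mathcal R$. Your route therefore makes (ETR) irrelevant to this lemma.

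It also sidesteps a scale issue in the paper's first step. As written, (ETR) supplies \eqref{eq:sharpahlforsD} only at one radius $r=r(\eps)$ for each $\eps$. On its own, that does not control the $\liminf_{r\to 0}$ in \eqref{eq:defSet}; (LDB) is stated directly as such a $\liminf$. In the intended applications, such as Theorem \ref{thm:biholder}, the volume bound holds at all small scales, so the paper's step is fine there. It is simply not needed.

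Your localization remark is adequate as bookkeeping. Restricting to the indicator of $B_R(x)$ gives a current that need not be normal. This is harmless, since \eqref{eq:defSet} depends only on the mass measure near the point. The paper's Lipschitz cutoff serves the same purpose while keeping the current integral.
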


\begin{proof}
Since the definition of $\set(T)$ is local and recalling Proposition \ref{prop-AKvsLW},  we can assume that
$T \in \bI_N(X)$, without loss of generality; i.e., for each point $x \in X$ one could 
restrict $T$ to some  $g \in \Lipbs(X)$ which is equal to 1 on a neighborhood of $x$.
\\Combining the assumption that 
$\norm{T}=\haus^N$ with \eqref{eq:sharpahlforsD}, we deduce that 
${\mathcal R} \subset \set(T)$.
Recall that $\norm{T}= \lambda_{\set(T)} \theta_T \haus^N\rstr \set(T)$, see \eqref{eq-TmeasRep}. Furthermore, by (LDB), the closure of ${\mathcal R}$ must be contained in $\set(T)$. Hence, $X \subset \set(T)$ which implies the claim. 
\end{proof}

\begin{lemma}\label{lem:loc-constant}
Let $(X,\dist, \haus^N)$ be a complete metric space 
 and let $p$ be a regular point in the sense of Definition \ref{def:assumptions}. For $\varepsilon >0$, let 
$u:B_r(p) \subset X \to\setR^{N}$ be a  map satisfying  the properties of Definition \ref{def:assumptions}. 

Let $S \in  \bI_N(X)$ satisfy $\|\partial S\|(B_r(p)) = 0$. 
Then there exists $k \in \mathbb{Z}$ such that:
\begin{itemize}
\item $\theta_S = k$, $\haus^N$-- a.e.\ on  $B_r(p)$; 
\item If additionally $\|S\|(B_r(p))>0$, then $k \neq 0$ and   
\[
S\rstr B_r(p)=  k \sum_{i}f_{i\sharp} [[1_{K_i}]],
\]
for charts of the form $(f_i, K_i)$, where $f_i: K_i \to X$
 equals $u^{-1}|_{u(U_i)}$;
\item If additionally $||S||\rstr B_r(p)= \haus^N\rstr B_r(p)$, then \[
|\theta_S| \leq (1+\eps)
  \Big ( \frac{1+\eps} {1-2\eps}\Big )^{N}, \quad \haus^N-\text{a.e. on } B_r(p).
  \]
\end{itemize}
\end{lemma}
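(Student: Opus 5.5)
The plan is to transfer the Euclidean constancy theorem to $B_r(p)$ through the homeomorphism $u$, and then to read off the weight of $S$ chart by chart. First, $u$ is Lipschitz on $B_r(p)$ by (i) of Definition \ref{def:assumptions}. Extend each component to all of $X$ via McShane to obtain a Lipschitz $\Psi:X\to\setR^N$ with $\Psi|_{B_r(p)}=u$. Then $\Psi$ restricted to $V:=B_r(p)$ is bijective onto its image. The image $u(V)$ is open by invariance of domain, since $u$ is a homeomorphism from $V$ onto a subset of $\setR^N$ and $V$ is homeomorphic to an open subset of $\setR^N$. I would also note here that $V$ is connected (balls are assumed connected in (ETR)), so $u(V)$ is connected. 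Corollary \ref{cor-constant} then gives $\Psi_\sharp S=[[g]]$ with $g\equiv k\in\mathbb Z$ on $u(V)$. By locality, $\Psi_\sharp(S\rstr V)=[[k\,1_{u(V)}]]$, since $\Psi_\sharp(S\rstr V)$ is the restriction of $[[g]]$ to $u(V)$ when $\Psi|_V$ is injective.

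Second, I would relate $g$ to $\theta_S$. Take the atlas of $V\cap\set(S)$ given by $f_i=u^{-1}|_{u(U_i)}$ restricted to compact subsets $K_i\subset u(U_i)$ (using inner regularity to exhaust each $u(U_i)$ up to null sets). By Theorem \ref{thm:parametricRep}, $S\rstr V=\sum_i f_{i\sharp}[[\theta_i]]$ with $\theta_i=\theta_S\circ f_i$ up to sign convention. Apply \eqref{eq-weightofbilip} with $\varphi=\Psi$, so that $\varphi\circ f_i=\mathrm{Id}$ on $K_i$ and the Jacobian sign is $+1$. This gives $\Psi_\sharp(S\rstr V)=[[\sum_i\theta_i 1_{K_i}]]$. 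The $K_i$ are disjoint, because the $U_i$ are and $u$ is injective. Comparing with $[[k\,1_{u(V)}]]$ yields $\theta_i=k$ a.e.\ on $K_i$, hence $\theta_S=k$ $\haus^N$-a.e.\ on $V\cap\set(S)$. Off $\set(S)$ the weight is understood as $0$.

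The subtle point is this: if $k\neq0$, then every $u(U_i)$ must be covered, so $\set(S)\supset V$ up to a null set. Indeed, on the part of $u(V)$ outside $\bigcup_i u(U_i\cap\set(S))$ the pushforward would vanish while $g=k$. This uses that $u(V\setminus\bigcup U_i)$ is Lebesgue-null. Since $u$ is Lipschitz and $\haus^N(V\setminus\bigcup U_i)=0$, that set has $\Leb^N$-measure zero. This yields the second bullet. If $\|S\|(V)>0$, then $\set(S)\cap V$ has positive measure, and there $\theta_S=k$ with $\theta_S\neq0$, so $k\neq0$. Consequently $S\rstr V=k\sum_i f_{i\sharp}[[1_{K_i}]]$, with the charts now covering all of $V$ up to a null set.

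For the mass bound, compare total masses. On one side, $\|\Psi_\sharp(S\rstr V)\|=|k|\Leb^N\rstr u(V)$, and by (ii) this is at least $|k|\,\omega_N(1-2\eps)^Nr^N$. On the other side, \eqref{eq-pushMass} bounds it above by $\Lip(u)^N\|S\|(V)=\Lip(u)^N\haus^N(V)$. The main obstacle is tracking the normalization of the Lipschitz constant: (i) is written with an extra factor $r$, i.e.\ $(1+\eps)r$, which I interpret as the scaled chart. I would use it together with \eqref{eq:sharpahlforsD} to get $\Lip(u)^N\haus^N(V)\le(1+\eps)^{N+1}\omega_Nr^N$, with the $r$-powers cancelling after rescaling. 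This gives $|k|\le(1+\eps)\big(\tfrac{1+\eps}{1-2\eps}\big)^N$.
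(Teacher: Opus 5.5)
Your outline follows the paper's proof: constancy via Corollary~\ref{cor-constant}, the atlas $f_i=u^{-1}|_{u(U_i)}$ with \eqref{eq-weightofbilip} to read off $\theta_i=k$, then a mass comparison. The step that does not hold as written is the passage from $\Psi_\sharp S$ to $\Psi_\sharp(S\rstr V)$. Injectivity of $\Psi$ on $V$ gives no control over $\Psi^{-1}(u(V))\setminus V$, and a McShane extension can send points of $X\setminus V$ into $u(V)$. For example, take $u=\mathrm{Id}$ on $V=B_r(0)\subset\setR^2$ and $\Psi_j(x)=\inf_{y\in V}(y_j+|x-y|)$; then $\Psi$ maps $(-2r,0)$ to roughly $(0,0.74\,r)\in V$. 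Hence $(\Psi_\sharp S)\rstr u(V)$ also contains $\Psi_\sharp\bigl(S\rstr(\Psi^{-1}(u(V))\setminus V)\bigr)$. For the same reason $g$ need not be constant on $u(V)$, since $\partial S$ may charge $\Psi^{-1}(u(V))\setminus V$. The bound $\|\partial(\Psi_\sharp T)\|(\Psi(V))\le\Lip(\Psi)^{n-1}\|\partial T\|(V)$ in the proof of Corollary~\ref{cor-constant} tacitly assumes $\Psi^{-1}(\Psi(V))=V$, and so does the identity $u_\sharp(S\rstr B_1(p))=(u_\sharp S)\rstr u(B_1(p))$ in \eqref{eq:uSthetai}. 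So you inherited this gap from the paper rather than introduced it.

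The identity $\Psi_\sharp(S\rstr V)=[[k\,1_{u(V)}]]$ is nevertheless true, and you can prove it directly using that $u$ is a homeomorphism onto $u(V)$, not merely injective.
\begin{itemize}
\item By integer rectifiability, $\Psi_\sharp(S\rstr V)=[[h]]$ with $h\in L^1(\setR^N;\mathbb Z)$ vanishing off $u(V)$.
\item For $\phi\in C^\infty_c(u(V))$, the set $u^{-1}(\spt(\phi))$ is compact in $V$, hence at positive distance from $X\setminus V$. So $\tilde\phi:=1_V\,(\phi\circ u)$ is Lipschitz on $X$ and vanishes off $V$.
\item Treat a cutoff $\chi\in C^\infty_c(u(V))$ with $\chi=1$ near $\spt(\phi)$ the same way, and use the strong (Borel) locality of currents. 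This gives $\int h\,\partial_j\phi\,\di\Leb^N=\pm\,\partial S(\tilde\phi,\Psi_1,\dots,\widehat{\Psi_j},\dots,\Psi_N)=0$, because $\|\partial S\|(V)=0$.
\end{itemize}
Hence $h$ is constant on the connected open set $u(V)$, and from there your argument runs. One caveat: openness of $u(V)$ via invariance of domain needs $V$ to be a topological $N$-manifold. (ETR) does not literally state this, and your sentence presupposes it, as does the paper tacitly; it holds in the intended examples, e.g.\ by Theorem~\ref{thm:biholder}.

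Your two departures from the paper are then sound and somewhat leaner.
\begin{itemize}
\item You get $\haus^N(V\setminus\set(S))=0$ straight from $\sum_i\theta_i1_{K_i}=k\neq0$ a.e.\ on $u(V)$, pulling back through the Lipschitz maps $(u|_{U_i})^{-1}$. This replaces the density-point argument around \eqref{eq:SBruBr}.
\item You bound $|k|$ through $|k|\Leb^N(u(V))\le\Lip(u|_V)^N\haus^N(V)$ via \eqref{eq-pushMass}, rather than slicing via \eqref{eq-massRestr-Slice} and Proposition~\ref{prop-propofSlices}. Here use the Euclidean Lipschitz constant of $u$ on $V$, which is legitimate since $\Psi_\sharp(S\rstr V)$ depends only on $\Psi|_V$. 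The componentwise McShane extension's constant can be larger by a factor up to $\sqrt N$.
\end{itemize}
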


\begin{proof}
If $\|S\|(B_r(p))=0$ then $S \rstr B_r(p)=0$, and thus the first item holds. Hence, we can assume that 
$\|S\|(B_r(p)) >0$.
\smallskip 

\textbf{Step 1.} We prove that $\haus^N(B_r(p)\setminus \set(S))=0$.

Up to rescaling the distance $\dist$ by the constant factor $1/r$, we can assume that $r=1$. Since $\|\partial S\|(B_1(p)) = 0$ and $u$ is a Lipschitz homeomorphism from $B_1(p)$ into its image, by
Corollary \ref{cor-constant},
we obtain that  
\begin{equation}\label{eq-uSk}
u_\sharp S = [[ k]] \qquad  \text{ on }u(B_1(p)),
\end{equation}
for some $k \in \mathbb Z$. 
That is, 
the degree of  $u$ with respect to $S$ satisfies $\deg(S, u, \cdot)=k$ on $u(B_1(p))$.

By assumption $\|S\|(B_1(p)) > 0$, hence 
$k \neq 0$, and since $u$ is a Lipschitz homeomorphism from $B_1(p)$ into its image, we have that $\spt(S) \cap B_1(p)= B_1(p)$.
This implies that $\set(S)\neq \emptyset$.

By  Definition \ref{def:assumptions} there exist disjoint Borel sets $U_i$ covering  $B_1(p)$ up to a $\haus^N$-negligible set, and the restriction of the map $u$ to $U_i$ is biLipschitz to its image in $\setR^N$.  Consider the atlas of $\set(S \rstr B_1(p))$ given by  
\[
f_i : u(U_i \cap \set(S)) \to U_i \cap \set(S)
\]
with
\[
f_i(a)=u^{-1}(a), \quad  \forall a \in u(U_i \cap \set(S)).
\]

We will replace each $U_i \subset B_1(p)$ with a Borel subset $\tilde U_i \subset U_i$ such that $\haus^N(U_i \setminus \tilde U_i)=0$ and  
\begin{equation}\label{eq-newU_i}
 \bigsqcup_{i \in \mathbb N} \tilde U_i \subset  \set(S).
\end{equation}
Consider the Lebesgue density points of $u(U_i)$:
\begin{equation}\label{eq:defLuUi}
\mathcal{L}(u(U_i))= \Big\{a \in u(U_i) \, : \lim_{r \to 0}  \frac{\Leb^N(B_r(a) \cap u(U_i))}{\Leb^N(B_r(a))} >0\Big\}. 
\end{equation}
We define $\tilde U_i = u^{-1}(\mathcal{L}(u(U_i)))$. 
Since $f_i= (u|_{U_i})^{-1}$ is Lipschitz,  we get that 
\[
\haus^N (U_i \setminus \tilde U_i)= \haus^N( f_i( u(U_i \setminus \tilde U_i)) ) \leq 
\Lip(f_i)^N \Leb^N (u(U_i) \setminus \mathcal{L}(u(U_i)))= 0.
\]
For $x\in U_i$, we can estimate
\begin{align}
\|S\|(B_r(x)) \geq & \|S\|(B_r(x) \cap U_i)=  \|S \rstr U_i \|(B_r(x)) =  u_\sharp \|S \rstr U_i \|( u(B_r(x))) \nonumber \\
\geq  &  (\Lip(u))^{-N}\| u_\sharp (S \rstr U_i )\|(u(B_r(x))) \nonumber \\
\overset{\eqref{eq-uSk}}{=}& (\Lip(u))^{-N}  k \Leb^N(u(B_r(x)) \cap u(U_i)). \label{eq:SBruBr} 
\end{align}
We now claim that  $\tilde{U}_i\setminus \set(S)=\emptyset$. Indeed, 
if $x \in \tilde U_i  \setminus \set(S)$ then, combining  \eqref{eq:defSet}, \eqref{eq:defLuUi}, the fact that $u|_{U_i}$ is biLipschitz to its image, and  \eqref{eq:SBruBr}, we reach the contradiction $0\neq0$:
$$
0\overset{\eqref{eq:defSet}}= \lim_{r\to 0} \frac{\|S\|(B_r(x))}{\omega_N r^N} \overset{\eqref{eq:defLuUi}}{=}   \lim_{r\to 0} \frac{\|S\|(B_r(x))}{\Leb^N(B_r(u(x)) \cap u(U_i))} \overset{\eqref{eq:SBruBr}} \geq \frac{k}{\Lip(u)^N}\neq 0.
$$
Thus the claim \eqref{eq-newU_i} holds. From this we infer that $\haus^N(B_1(p) \setminus \set(S))=0$, proving the claim.

\smallskip

\textbf{Step 2}.  We prove that $\theta_S = k$, $\haus^N$-- a.e.\ on  $B_1(p)$.
\\Let $\tilde{U}_i\subset \set(S)\cap B_1(p)$ be given by the proof of step 1. By Theorem \ref{thm:parametricRep}, there exist functions $\theta_i \in L^1( \setR^N, \mathbb Z)$ with $\spt(\theta_i) \subset u(\tilde{U}_i)$
such that 
\begin{equation}\label{eq:SB1fi}
S\rstr B_1(p)= \sum_{i}f_{i\sharp} [[\theta_i]].
\end{equation}
Hence, the weight function $\theta_S$ can be written as  
\begin{equation}\label{eq:thetaSthetaifi}
\theta_S= \sum_{i} \theta_i \circ f_i^{-1},     \qquad  \haus^N\text{-a.e. on } \bigsqcup_{i}U_i = \set(S).
\end{equation} 
Taking the pushforward of $S\rstr B_1(p)$ with respect to $u$ gives  
\begin{equation}\label{eq:uSthetai}
u_\sharp(S\rstr B_1(p))=
(u_\sharp S) \rstr u(B_1(p)) = [[ \sum_{i} \theta_i]],
\end{equation}
where we used that $f_i= (u|_{U_i})^{-1}$ and 
the formula \eqref{eq-weightofbilip} provided in Remark \ref{rmrk-pushforwardweights}. 
Since $u$ is a homeomorphism from $B_1(p)$ into its image, the images $u(U_i)$ are pairwise disjoint; thus, the combination of \eqref{eq-uSk} and \eqref{eq:uSthetai} yields
\begin{equation}\label{eq:thetaikuUi}
\theta_i = k \qquad \Leb^N\text{-a.e. on } u(\tilde{U}_i), \quad \forall i\in \setN.
\end{equation}
Since from step 1 we know that $\haus^N(B_1(p)\setminus \bigsqcup_{i}U_i)=0$, combining \eqref{eq:thetaSthetaifi} and \eqref{eq:thetaikuUi} we conclude that $\theta_S = k$, $\haus^N$-- a.e.\ on  $B_1(p)$.
\smallskip

\textbf{Step 3}. Conclusion.
\\The second item follows by the combination of \eqref{eq:SB1fi}, \eqref{eq:thetaikuUi} and step 1.

We finally prove the last item.  
We first apply \eqref{eq-mass-Restr} to $S$ and $du=(1,u)$, 
and use the expression
\eqref{eq-massRestr-Slice}
for $\norm{S \rstr du}$ in terms of the mass of the slices $\norm{\langle S, u, a \rangle}$,
\begin{align}\label{eq-weightT}
\Lip(u_1) \cdots \Lip(u_N)\|S\|(B_1(p)) &  \geq  \|S \rstr d  u \| (B_1(p)) \notag \\
& =   \int_{\setR^N}    \norm{\langle S, u, a \rangle} (B_1(p)) \, d \Leb^N(a) \notag \\
& =   |\theta_S| \Leb^N(u(B_1(p) ) ),
\end{align}
where we used that $\theta_S$ is constant $\haus^N$ -- a.e.\ on $B_1(p)$ and $\haus^N(B_1(p) \setminus \set(S))=0$.  
Hence, 
\[
 |\theta_S|  \leq \frac{\Lip(u_1) \cdots \Lip(u_N)\|S\|(B_1(p)) }{ \Leb^N(u(B_1(p))},  \qquad \haus^N\text{-- a.e. on }B_1(p).
\]
Recalling  Definition \ref{def:assumptions}, i.e.\ that each component of $u$  is a $(1+\eps)$-Lipschitz function, 
that $B_{1-2\eps}(0) \subset u(B_1(p))$ and that $\haus^N (B_1(p)) \leq (1+\eps) \omega_N$, together with the assumption 
of this item, we obtain
\[
  |\theta_S| 
   \leq  (1+\eps)
  \Big ( \frac{1+\eps} {1-2\eps}\Big )^{N} ,   \quad \haus^N\text{-- a.e.\ on $B_1(p)$.}
\] 
\end{proof}

\begin{proposition}\label{prop:locrep1}
Let $N\geq 1$ be a natural number and let  $(X,\dist,\haus^N)$ be a metric measure space satisfying (ETR). Assume there exists  $T \in 
\LWic{X}{N}$, with  $\|T\|=\haus^N$ and $\partial T=0$.  Then:
\begin{itemize}
\item  For any $p \in \mathcal R$  there exist $r=r(p)>0$  and $\sigma=\sigma(p) \in \{1, -1\}$ such that $\theta_T=\sigma$,  $\haus^N$--a.e.\ on  $B_r(p)$. 
\item Assume in addition that the regular set $\mathcal{R}$ of $X$ is a.e.-connected (see Definition \ref{def-weaklyConv}). Then  there exists $\sigma \in \{1, -1\}$  such that $\theta_T=\sigma$,  $\haus^n$--a.e.\ on  $X$.
\end{itemize}
In particular, the second item holds if $(X,\dist,\haus^N)$ is a locally non-collapsed $\RCD$ space without boundary, or a non-collapsed strong Kato limit space without boundary (in the sense of \eqref{eq:SingCod>1ETR}).
\end{proposition}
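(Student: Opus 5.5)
For the first item I will localize and then apply Lemma \ref{lem:loc-constant}. Fix $p\in\mathcal R$ and $\eps>0$ small, and let $r$ and $u:B_r(p)\to\setR^N$ be as in Definition \ref{def:assumptions}. Choose $g\in\Lipbs(X)$ with $g\equiv 1$ on $B_{2r}(p)$. By Proposition \ref{prop-AKvsLW}, $S:=T\rstr g$ is an Ambrosio--Kirchheim integral current, $S\in\bI_N(X)$. Since $\partial T=0$, the boundary $\partial S$ is concentrated where $g$ is not locally constant, so $\|\partial S\|(B_r(p))=0$. Also $\|S\|\rstr B_r(p)=\haus^N\rstr B_r(p)$, and this is positive by \eqref{eq:sharpahlforsD}. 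Lemma \ref{lem:loc-constant} then gives $k\in\mathbb Z\setminus\{0\}$ with $\theta_T=\theta_S=k$ $\haus^N$-a.e.\ on $B_r(p)$, and the bound $|k|\le(1+\eps)\big(\frac{1+\eps}{1-2\eps}\big)^N$. Taking $\eps$ small enough that the right-hand side is $<2$ forces $|k|=1$. So $\sigma(p):=k\in\{\pm1\}$, and the first item holds with this $r(p)$.

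For the second item I will use the a.e.-connectedness of $\mathcal R$. Fix $p\in\mathcal R$ and let $q\in\mathcal R$ be outside the exceptional null set. Take a continuous curve $\gamma:[0,1]\to\mathcal R$ from $p$ to $q$. The balls $B_{r(y)}(y)$, $y\in\gamma([0,1])$, cover the compact set $\gamma([0,1])$, so finitely many of them suffice. These can be ordered along $\gamma$ so that consecutive balls intersect, for instance via a Lebesgue-number argument on $[0,1]$. Two intersecting balls have an open, hence $\haus^N$-positive, intersection: a nonempty open set has positive measure because it contains a small ball around a point of $\mathcal R$, using the density of $\mathcal R$ and \eqref{eq:sharpahlforsD}. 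On that intersection $\theta_T$ equals both signs a.e., so the signs agree, and propagating along the chain gives $\sigma(q)=\sigma(p)$.

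Hence $\theta_T=\sigma(p)$ a.e.\ on $B_{r(q)}(q)$ for a.e.\ $q\in\mathcal R$. Since $\haus^N(X\setminus\mathcal R)=0$ and $X$ is separable, countably many such balls cover $\mathcal R$ up to a null set (Lindelöf), which yields $\theta_T=\sigma$ a.e.\ on $X$. The final sentence of the statement follows from Proposition \ref{prop-ex-assms}.

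I expect the main obstacle to be the step in the first item where $\|\partial S\|(B_r(p))=0$ and $\|S\|=\haus^N$ on $B_r(p)$ must be verified after passing from the Lang--Wenger to the Ambrosio--Kirchheim setting. This relies on the locality of the boundary for $T\rstr g$: $\partial(T\rstr g)=-T\rstr dg$ when $\partial T=0$, and this vanishes on the open set where $g$ is constant.
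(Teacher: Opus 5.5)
Your proposal is correct and is essentially the paper's proof: you localize $T$ to an Ambrosio--Kirchheim current, apply Lemma~\ref{lem:loc-constant} with $\eps$ so small that the mass bound forces $|\theta_T|=1$ on $B_r(p)$, and propagate the sign along a finite chain of balls covering a curve in $\mathcal R$ (the paper argues the second item by contradiction, while you argue directly and finish with a Lindel\"of covering, which amounts to the same thing). One technical caveat, which the paper's own localization shares: on $\{0<g<1\}$ the current $T\rstr g$ has non-integer weight, so in general it is not in $\bI_N(X)$, and to apply Lemma~\ref{lem:loc-constant} literally you should instead take $S=T\rstr B_\rho(p)$ for a.e.\ $\rho>r$, which is integral by slicing and has $\|\partial S\|$ concentrated on $\{\dist(p,\cdot)=\rho\}$; likewise, density of $\mathcal R$ is not part of (ETR), but you do not need it, since in your Lebesgue-number chain consecutive balls already share a point of $\gamma\subset\mathcal R$.
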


\begin{proof} 
Fix $p\in \mathcal R$ a regular point.  By restricting $T$ to some  $g \in \Lipbs(X)$ which is equal to 1 on a bounded neighborhood of $p$, we can assume with no loss of generality, by Proposition \ref{prop-AKvsLW}, that
$T \in \bI_N(X)$. 

Let $\eps>0$ be sufficiently small such that, applying Lemma \ref{lem:loc-constant} we 
obtain that $\theta_T$ is a.e. a constant function on $B_r(p)$ that satisfies 
\[
  |\theta_T| \leq (1+\eps)
  \Big ( \frac{1+\eps} {1-2\eps}\Big )^{N} <2  \qquad \text{on $B_r(p)$.}
\] 
Since $\theta_T$ is a non-zero integer, then it is either the constant function  $1$ or $-1$ a.e. on $B_r(p)$. This proves the first item. 
\smallskip

We prove the second item by contradiction. If it fails then, thanks to the first item,  we can assume that there exist $p,q\in \mathcal{R}$, and $r>0$, such that 
\begin{equation}\label{eq:thetaT1p-1q}
\begin{split}
   \theta_T=1,\quad   \haus^N\text{--a.e.\  on }B_r(p)\\
   \theta_T=-1,\quad   \haus^N\text{--a.e.\  on }B_r(q).
\end{split}
\end{equation}
Thanks to the a.e.-connectedness of the regular set, up to slightly perturbing $q$ and replacing $r$ by $r/2$,  we can also assume that there exists a continuous curve $\gamma:[0,1]\to \mathcal{R}\subset X$ from $p$ to $q$.
Since  $\gamma([0,1])$ is compact and connected, we can form a finite chain of metric balls $\{B_{s_i}(x_i)\}_{i=1, \ldots, n}$ with the following properties:
\begin{enumerate}
\item for each $i$, the radius $s_i>0$ is small enough such that $\theta_T  = k_i$, $\haus^N$--a.e.\ on  $B_{s_i}(x_i)$, for some $k_i\in \{-1, 1\}$. This holds by the first item of the proposition. 
\item  $B_{s_i}(x_i)\cap B_{s_{i+1}}(x_{i+1})\neq \emptyset$, for all $i=1, \ldots, n-1$;
\item $\gamma([0,1])\subset \bigcup_{i=1}^n B_{s_i}(x_i).$
\end{enumerate}
By property (2), the non-empty intersection of consecutive balls is a non-empty open set and thus has positive $\haus^N$-measure; therefore, using (1), we infer that $k_{i+1}=k_i$, for all $i$. Combining this fact with (3) yields a contradiction with  \eqref{eq:thetaT1p-1q}.

The last assertion follows from Proposition \ref{prop-ex-assms} and the previous items.
 \end{proof}

 Proposition \ref{prop:locrep} and Corollary \ref{cor-2orientations} follow from the following result.

\begin{proposition}\label{prop:locrep2}
Let $N\geq 1$ be a natural number and let  $(X,\dist,\haus^N)$ be a metric measure space satisfying (ETR) and such that the regular set $\mathcal{R}$ of $X$ is a.e.-connected (in particular, the assumptions are satisfied  if $(X,\dist,\haus^N)$ is a locally non-collapsed $\RCD$ space without boundary, or a non-collapsed strong Kato limit space without boundary,  in the sense of \eqref{eq:SingCod>1ETR}). Let  $S \in \LWic{X}{N}$, $x \in X$ and $s > 0$ such that  $\|\partial S\|(B_s(x)) = 0$. Then:  
\begin{itemize}
\item  There exists an integer $k \in \mathbb{Z}$ such that $\theta_S = k$ a.e.\ on  $B_s(x)$; 
 \item If additionally there exists   $T \in 
\LWic{X}{N}$ with $\|T\|=\haus^N$ and $\partial T=0$, then   $S \rstr B_s(x)= \sgn(\sigma) k T\rstr B_s(x)$, where $\sigma \in \{-1, 1\}$, and $\theta_T=\sigma$,  $\haus^N$-a.e.. 
\end{itemize}
\end{proposition}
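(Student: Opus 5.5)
The plan is to reduce to the finite-mass setting, get local constancy of $\theta_S$ near regular points from Lemma \ref{lem:loc-constant}, propagate it across $B_s(x)$ using a.e.-connectedness of $\mathcal R$, and then compare with the orienting current $T$ via the parametric representation.

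\textbf{Reduction.} As in the proof of Proposition \ref{prop:locrep1}, I would multiply $S$ by some $g\in\Lipbs(X)$ with $g\equiv1$ on a neighbourhood of $\bar B_s(x)$. By Proposition \ref{prop-AKvsLW} this lets me assume $S\in\bI_N(X)$ without changing $S\rstr B_s(x)$, $\theta_S$ on $B_s(x)$, or the hypothesis $\|\partial S\|(B_s(x))=0$.

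\textbf{First item: local constancy.} Fix $p\in\mathcal R\cap B_s(x)$. I choose $r$ so small that Definition \ref{def:assumptions} applies on $B_r(p)$ and $B_r(p)\subset B_s(x)$. Then $\|\partial S\|(B_r(p))=0$, and Lemma \ref{lem:loc-constant} gives $\theta_S=k_p$ $\haus^N$-a.e. on $B_r(p)$, with $k_p\in\mathbb Z$. Here I use the convention $\theta_S=0$ off $\set(S)$, which is consistent with the case $\|S\|(B_r(p))=0$ treated in that lemma.

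\textbf{First item: propagation.} Next I show that $p\mapsto k_p$ is constant on $\mathcal R\cap B_s(x)$. Take $p,q\in\mathcal R\cap B_s(x)$. I use a.e.-connectedness to join $p$ to an a.e.-nearby $q'$ by a continuous curve in $\mathcal R$, and then run the same chain-of-balls argument as in Proposition \ref{prop:locrep1}. Consecutive balls overlap in a nonempty open set, which has positive $\haus^N$-measure, so the constants agree along the chain. Since $\haus^N(X\setminus\mathcal R)=0$, this yields $\theta_S=k$ a.e. on $B_s(x)$.

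\textbf{Main obstacle.} Definition \ref{def-weaklyConv} gives curves in $\mathcal R$, but not curves inside $B_s(x)$. The chain of balls may leave $B_s(x)$, where $\|\partial S\|$ need not vanish, so Lemma \ref{lem:loc-constant} cannot be applied there. I would first try to use a stronger local version: in the $\RCD$ and Kato cases, \cite[Cor.\ A.8]{KapMonGT} provides geodesics in $\mathcal R$ between a.e. pairs of points. For $q$ a.e. in a smaller concentric ball $B_{s'}(x)$, $s'<s/3$, such geodesics stay inside $B_s(x)$. I would then let $s'\uparrow s$ via an exhaustion, or cover $B_s(x)$ by small balls and use connectedness of balls, which is assumed in (ETR). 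If only the abstract hypothesis is available, I would interpret a.e.-connectedness relative to the open set $B_s(x)$, which is presumably what the authors intend.

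\textbf{Second item.} By Proposition \ref{prop:locrep1}, $\theta_T=\sigma\in\{\pm1\}$ a.e. Fix a regular $p$ with chart $u$. Lemma \ref{lem:loc-constant}, second item, applied to both $S$ and $T$ on $B_r(p)\subset B_s(x)$, expresses each as a constant times $\sum_i f_{i\sharp}[[1_{K_i}]]$ with the same charts $f_i=u^{-1}|_{u(U_i)}$. The constants are the degrees $k$ and $\sigma$ of $u$. Comparing gives $S\rstr B_r(p)=\sigma k\,T\rstr B_r(p)$; note that $\sgn(\sigma)=\sigma$.

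To globalize, I would apply Theorem \ref{thm:parametricRep} to $S\rstr B_s(x)$ and $T\rstr B_s(x)$ with a common atlas built from these local charts. Their pieces satisfy $\theta^S_i=\sigma k\,\theta^T_i$ a.e. This holds because the signs of the chart degrees relative to $S$ and $T$ differ exactly by $\sigma k$ at a.e. point, which follows from the local identity above together with the common set $\set(T)=X\supset\set(S)$ up to null sets (Lemma \ref{lem:set}). Summing the pieces gives $S\rstr B_s(x)=\sigma k\,T\rstr B_s(x)$.
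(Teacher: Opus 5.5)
Your route is the paper's: Lemma \ref{lem:loc-constant} near regular points, chains of balls along curves in $\mathcal R$, then comparison with $T$ via Proposition \ref{prop:locrep1}.

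\textbf{The obstacle you flag.} This is a real gap in the paper's own proof. The paper runs the chain along an arbitrary curve given by Definition \ref{def-weaklyConv} and applies Lemma \ref{lem:loc-constant} on balls that may leave $B_s(x)$. Under the abstract hypotheses this gap cannot be closed.

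Let $X$ be $\setR^2$ with two points $a\neq b$ identified to a point $o$, with the quotient length metric. Then balls are connected, and (ETR) holds with $\mathcal R=X\setminus\{o\}$, which is connected. (LDB) holds, the singular set has codimension $2$, and the image $T$ of $\curr{\setR^2}$ satisfies $\|T\|=\haus^2$ and $\partial T=0$. Let $S$ be the image of the flat disc of radius $R<|a-b|/2$ centred at $a$, and take $s<R$. Then $\|\partial S\|(B_s(o))=0$. Yet $|\theta_S|$ is $1$ on the part of $B_s(o)$ coming from $a$ and $0$ on the part coming from $b$, so both items fail.

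So localized connectivity is genuinely needed, as you suspected. For the $\RCD$ and Kato classes, your covering variant works:
\begin{itemize}
\item For each $y\in B_s(x)$, choose $\rho$ with $B_{3\rho}(y)\subset B_s(x)$, and get constancy on $B_\rho(y)$.
\item For Kato limits, replace $3$ by $1+2L^2$, where $L$ is the biLipschitz constant to the $\RCD$ distance $\dist'$. This is needed because the curves from \cite{KapMonGT} are $\dist'$-geodesics.
\item Conclude because a locally constant integer-valued function on the connected ball $B_s(x)$ is constant.
\end{itemize}
The exhaustion $s'\uparrow s$ does not work, because the constraint $s'<s/3$ is fixed.

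\textbf{A second failing step, also shared with the paper.} The weight is defined only up to a sign, chart by chart. The integer supplied by Lemma \ref{lem:loc-constant} is the degree of the particular map $u_p$. Composing $u_{p'}$ with a reflection of $\setR^N$ keeps all (ETR) properties and flips that integer.

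So ``the constants agree on overlaps'' only gives $|k_p|=|k_{p'}|$. The first item is therefore correct for $|\theta_S|$. In the second item, however, nothing identifies the local degrees of $u_p$ with global constants $k,\sigma$, so your common-atlas globalization rests on an unproved identity.

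The fix is to propagate an atlas-free integer instead:
\begin{itemize}
\item Define $c_p$ by $S\rstr B_r(p)=c_p\,T\rstr B_r(p)$. Lemma \ref{lem:loc-constant} provides it, with $c_p=k_p\sigma_p$.
\item On an overlap $V$, one has $(c_p-c_{p'})\,T\rstr V=0$ and $\|T\rstr V\|=\haus^N\rstr V\neq 0$, hence $c_p=c_{p'}$.
\item Once $c$ is constant on $B_s(x)$, take a countable disjoint Borel refinement of such balls covering $\mathcal R\cap B_s(x)$. Together with $\|S\|,\|T\|\ll\haus^N$ and $\haus^N(X\setminus\mathcal R)=0$, this gives $S\rstr B_s(x)=c\,T\rstr B_s(x)$ with no parametric representation needed.
\end{itemize}

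\textbf{A minor slip, also in the paper.} $S\rstr g$ with a Lipschitz cutoff $g$ has weight $g\,\theta_S$, so it is not an integral current. Restrict instead to $B_\rho(x)$ for a.e.\ $\rho>s$. By slicing this gives an element of $\bI_N(X)$ with the same boundary inside $B_s(x)$.
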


\begin{proof}
We start by proving the first item. The argument is analogous to the proof of the second item of Proposition \ref{prop:locrep1}. 
Fix $p\in B_s(x)\cap \mathcal{R}$ a regular point and define
\[
E= \{ q\in B_s(x)\cap \mathcal{R} \, | \,  \text{there exists a continuous curve from $p$ to $q$ contained in $\mathcal R$}\}
\]
Since by assumption $\mathcal{R}$ is a.e.-connected, 
$E$ is a subset of $B_s(x)$ of full $\haus^N$-measure. 
For any $q \in E$ let $\gamma:[0,1]\to X$ be a continuous curve  from $p$ to $q$ contained in the regular set $\mathcal R$.
Since  $\gamma([0,1])$ is compact and connected, we can form a finite chain of metric balls $\{B_{s_i}(x_i)\}_{i=1, \ldots, n}$ with the following properties:
\begin{enumerate}
\item for each $i$, the radius $s_i>0$ is small enough such that $\theta_S  = k_i $ a.e.\ on  $B_{s_i}(x_i)$ for some $k_i\in \mathbb Z$.  This holds by  Lemma \ref{lem:loc-constant};
\item  $B_{s_i}(x_i)\cap B_{s_{i+1}}(x_{i+1})\neq \emptyset$ for all $i=1, \ldots, n-1$;
\item $\gamma([0,1])\subset \bigcup_{i=1}^n B_{s_i}(x_i).$
\end{enumerate}
By property (2), consecutive balls have intersection of positive $\haus^N$-measure; therefore $k_{i+1}=k_i$, for all $i$. Thus $\theta_S=k$ $\haus^N$-a.e.\ on a neighbourhood of both $p$ and $q$. Since the choice of such two points was arbitrary, we deduce that $\theta_S = k$ a.e.\ on  $B_s(x)$

The second assertion follows by combining the first item with Proposition \ref{prop:locrep1}. 
\end{proof}

\begin{remark}\label{rem:AltMPwSep}
We note that a different way of proving the results in this subsection could have been to argue as in 
\cite{MatveevPortegies17} where, 
by further assuming that  $(X,\dist, \haus^N)$ is isometrically embedded in a 
     $w^*$-separable dual space $Z$, the authors write 
      $T\rstr B_r(p)$
     and $S\rstr B_r(p)$ as 
\begin{align*}
T\rstr B_r(p) (g,\pi) = & \int_{ \set(T) \cap B_r(p)}  \theta_T(y)  g(y)  \langle \tau,  \wedge_N d_y^{\set(T)} \pi \rangle \di \haus^N (y),
\end{align*}
for any $(g, \pi) \in \mathcal D^N(Z)$ and
   where $\tau$ is the orientation given by 
\[
\tau(x) = \frac{wd_af_i(e_1) \wedge \cdots \wedge  wd_af_i(e_n)} {J_n(wd_af_i)} \in \wedge_n \Tan^{(n)}(f_i(K_i), x),\]
for
$x \in f_i(K_i)$, $a= f_i^{-1}(x)$ and $e_1,...,e_n$ the canonical basis of $\setR^n$, for charts given as explained below Definition \ref{def:assumptions}.
Furthermore, for $\haus^N$--a.e. $y \in \set(S) \cap B_r(p)$
\begin{align*}
S\rstr B_r(p)  (g,\pi)  = & \int_{\set(S) \cap B_r(p)}    \theta_S(y)  g(y) \langle \tau,  \wedge_N d_y^{\set(S)} \pi \rangle \haus^N (y)\\
=  &   \frac{\theta_S(y)}{ \theta_T(y)} T\rstr B_r(p) (g,\pi).
\end{align*}
We omit the details and refer to \cite{MatveevPortegies17} for this approach.
\end{remark}

\subsection{Proof of Theorem \ref{thm-main} and Theorem \ref{thm-main2}}\label{sec-ProofsThms}

The next result is inspired by {\cite[Theorem 4.1, i)]{MatveevPortegies17}} and will be used to prove Theorem \ref{thm-main} at the end of the section.

\begin{theorem}\label{thm-mainD}
Let $(X_i,\dist_i,\haus^N)$ be a sequence of  non collapsed $\RCD(K,N)$ spaces,  for some $K \in \setR$ and $N \in \mathbb N$, without boundary. 
Assume that
\begin{itemize}
\item $\sup_i {\rm diam}(X_i)<\infty$;
\item  $(X_i, \dist_i)\to (X,\dist)$ in GH sense.  
\item Each $X_i$ is oriented by an $N$-dimensional integral current $T_i \in \bI_N(X_i), \partial T_i=0, \ \|T_i\|=\haus^N$.
\end{itemize}
Then, either one of the following happens:
\begin{itemize} 
\item \emph{Collapsing case}, i.e.
$\lim_{i \to \infty}\haus^N(X_i) =0$: 
the sequence $T_i$  converges in the flat topology to the zero current $T=0 \in \bI_N(X)$
\item \emph{non-collapsing case}, i.e.
$\lim_{i \to \infty}\haus^N(X_i) >0$: the limit  $(X,\dist,\haus^N)$
is a non-collapsed $\RCD(K,N)$ space without boundary,
the currents $T_i$ subconverge in the flat topology to a non zero current $T \in \bI_N(X)$, with $\partial T=0$, $X=\set(T)$, $\|T\|=\haus^N$; i.e., $T$ is an orientation in the sense of currents for $(X,\dist,\haus^N)$. 
\end{itemize}
\end{theorem}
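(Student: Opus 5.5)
The plan is to split according to the volume behaviour. In the \emph{collapsing case} the argument is elementary. Since $\partial T_i=0$, the decomposition $T_i - 0 = T_i + \partial 0$ gives $\dist_F(T_i,0)\le \mass(T_i)=\haus^N(X_i)\to 0$. After embedding everything into a common space by Theorem \ref{thm-IFsubsetGH} (or Theorem \ref{thm-WengerCpctness}), the flat limit is therefore $T=0$, using the listed property that vanishing mass forces a zero limit.

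In the \emph{non-collapsing case} I would proceed as follows.
\begin{enumerate}
\item By volume convergence for non-collapsed $\RCD$ spaces \cite{DPG}, $(X,\dist,\haus^N)$ is a non-collapsed $\RCD(K,N)$ space and $\haus^N_{X_i}\to\haus^N_X$ weakly. By stability of boundary \cite{KapMonGT,BrueNaberSemola20}, $X$ has no boundary.
\item Theorem \ref{thm-IFsubsetGH} applies because $\mass(T_i)=\haus^N(X_i)$ is uniformly bounded by Bishop--Gromov and the diameter bound, and $\partial T_i=0$. It gives $W$, isometric embeddings, and $T\in\bI_N(W)$ with $\set(T)\subset X$. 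The boundary $\partial T$ is the flat limit of $\partial T_i=0$, hence $\partial T=0$. Lower semicontinuity of mass, applied on open sets, gives $\|T\|\le\haus^N\rstr X$.
\end{enumerate}

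The core step is to show that near every regular point $p\in\mathcal R(X)$ one has $\theta_T=\pm1$ almost everywhere.
\begin{enumerate}
\item Fix such a $p$ and a small $r$ so that $B_{16r}(p)$ is $(N,\delta)$-symmetric. Choose $p_i\to p$. For large $i$ the balls $B_{16r}(p_i)$ are $(N,2\delta)$-symmetric, so Theorem \ref{thm:biholder} provides $\eps$-splitting maps $u_i$ on $B_{8r}(p_i)$ and $u$ on $B_{8r}(p)$.
\item By Arzel\`a--Ascoli along the GH approximations (stability of splitting maps), I can arrange $u_i\to u$ uniformly on the balls inside $W$.
\item Extend each component to a $(1+\eps)$-Lipschitz function on all of $W$ by McShane extension.
\item To localize, use coarea on $\dist(p_i,\cdot)$ to pick $\rho\in(r/2,r)$ such that $\mass(\partial(T_i\rstr B_\rho(p_i)))$ is uniformly bounded along a subsequence. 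Then $T_i\rstr B_\rho(p_i)$ subconverges in flat norm to some $T'$ that agrees with $T\rstr B_\rho(p)$ on a smaller ball. I expect this identification to need a slicing argument.
\item Lemma \ref{le:L1DegConv} then gives
\[
\deg(T_i\rstr B_\rho(p_i),u_i,\cdot)\to \deg(T',u,\cdot)\quad\text{in }L^1(\setR^N).
\]
\item By Lemma \ref{lem:loc-constant} and Proposition \ref{prop:locrep1} applied in $X_i$, the left-hand side equals a constant $\sigma_i\in\{\pm1\}$ on $u_i(B_{\rho'}(p_i))\supset B_{(1-2\eps)\rho'}(0)$. Passing to a further subsequence with $\sigma_i=\sigma$ constant, the degree of $T'$ equals $\sigma$ almost everywhere on a Euclidean ball.
\item Lemma \ref{lem:loc-constant} applied to $T$ in $X$ (valid since $\|\partial T\|=0$) gives that $\theta_T$ is a constant $k$ almost everywhere on a neighbourhood of $p$. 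Via the biLipschitz charts $u|_{U_n}$ and \eqref{eq-weightofbilip}, this constant equals the degree, so $k=\sigma=\pm1$ and the neighbourhood lies in $\set(T)$ up to a null set.
\end{enumerate}

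To conclude, note that $\haus^N(X\setminus\mathcal R)=0$, so $\haus^N(X\setminus \set(T))=0$ and $|\theta_T|=1$ almost everywhere. By \eqref{eq-TmeasRep}, $\|T\|=\lambda\,\haus^N\rstr X$. At regular points the approximate tangent space is Euclidean, so the area factor $\lambda$ equals $1$; alternatively, $\|T\|\le\haus^N$ combined with $\|T\|\ge\haus^N$ from the slice estimate \eqref{eq-massRestr-Slice}, using $1$-Lipschitz almost-orthonormal $u$, forces equality. Hence $\|T\|=\haus^N$, and Lemma \ref{lem:set} gives $\set(T)=X$. Moreover $T\ne0$ because $\haus^N(X)>0$.

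I expect the main obstacle to be the localization step (item 4 above): making the degree comparison rigorous when the $T_i$ must be cut off to balls. This requires choosing good radii with controlled boundary mass, identifying the flat limit of the restrictions with the restriction of the limit, and extending the $u_i$ to $W$ while keeping $\|u_i-u\|_\infty\to0$.
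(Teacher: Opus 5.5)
Your proposal is correct and follows essentially the same route as the paper: the common embedding from Theorem \ref{thm-IFsubsetGH}, uniformly converging splitting maps extended to $W$, a good-radius localization combined with Lemma \ref{le:L1DegConv} to transfer the degree $\pm1$ from the $X_i$ to the limit, and slice/density estimates to obtain $\|T\|=\haus^N$ and $\set(T)=X$. The one noticeable variation is that you pin down $\theta_T$ by applying Lemma \ref{lem:loc-constant} to the limit current (using $\partial T=0$) and comparing degrees on a fixed Euclidean ball, which is a cleaner way to make precise the paper's claim that the weights $\theta_{\varphi_{i\sharp}T_i}$ converge in $L^1$ to $\theta_{\varphi_\sharp T}$.
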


\begin{proof}
Given that  $||T_i||=  \haus^N$ and that $\sup_i {\rm diam}(X_i)<\infty$, we can apply Bishop-Gromov's volume comparison (see \cite[Thm\,2.3]{Sturm:II})) to infer that
$$\sup_{i \in \mathbb N} \Big \{ ||  T_i || (X_i) \Big \}  < \infty.$$
Thus, Wenger's compactness theorem \ref{thm-WengerCpctness} implies that there exists a subsequence of the $(X_i,\dist_i, T_i)$, that we do not relabel, that converges in intrinsic flat sense to an $N$-dimensional integral current space 
$(Y,\dist_Y,T)$. 

If $\lim_{i \to \infty}\haus^N(X_i) =0$,
since $||T_i||=  \haus^N$ and by the lower semi-continuity of the mass under flat convergence, we obtain that $T=0$ and thus  
$(Y,\dist_Y,T)$ is the zero integral current space.

Otherwise, $\lim_{i \to \infty}\haus^N(X_i)=\haus^N(X)>0$ and the sequence $(X_i, \mathsf d_i, \haus^N)$ converges in mGH sense to $(X, \dist, \haus^N)$  (see \cite{DPG}). By the stability of the $\RCD$ condition under mGH convergence (see for instance \cite{GMS15}), it follows that $(X, \dist, \haus^N)$ is an $\RCD(K,N)$ space.
Moreover,  since $\partial X_i=\emptyset$ then the limit space $X$ has no boundary, thanks to \cite{BrueNaberSemola20}.
We will show that $T$ is an orientation for 
$(X,\dist,\haus^N)$.

By Theorem
\ref{thm-IFsubsetGH},  there exist a complete metric space $W$, and isometric embeddings 
\[ 
\varphi_i: X_i \to W \qquad \forall \, i \in \mathbb N,
\]
and $\varphi: X \to W$, 
such that 
\begin{eqnarray}
       & \varphi_{i \sharp} T_i \to \varphi_\sharp T  \quad \text{ in flat sense} \label{eq:phiiTitophiT}\\
     &   \varphi_{i}(X_i) \to \varphi(X) \quad \text{in Hausdorff sense in $W$} \\
     &\varphi(Y)=\set(\varphi_\sharp(T)) \subset \varphi(X).
\end{eqnarray}

 By the definition of flat convergence, $ \partial (\varphi_{i \sharp} T_i) =0$ converges to $\partial (\varphi_\sharp T)$. 
Hence, $\partial (\varphi_\sharp T)=0$.   
Since the pushforward and boundary operators commute, we have $\partial T=0$.

In order to show that $\norm{T}=\haus^N$, we will first establish that the multiplicity function of $T$, $\theta_T$, is the constant function either $1$ or $-1$ on the regular set of $X$. Recall that by
Corollary \ref{cor-2orientations},  for each $i$ there exists $\sigma_i \in \{-1, 1\}$ so that 
\[
\theta_{T_i}  \equiv \sigma_i    \qquad \text{ on } X_i \cap \set(T_i).
\]
Passing to a subsequence, that we do not relabel, assume that the sequence $\sigma_i$ is constant and equal to  $1$.

\bigskip

For any $p \in \mathcal R \subset  X$ there exist $p_i \in \mathcal R_i \subset X_i$ such that $\varphi_i(p_i) \to \varphi(p)$ in $W$. Fix $\varepsilon>0$. Combining Theorem \ref{thm:biholder} with a standard rescaling argument,  we can  assume that there exist functions
\[
u_i: B_8(p_i) \subset X_i \to \setR^N\quad \forall \, i \in \mathbb N, \qquad u:   B_8(p) \subset X \to \setR^N
\]
that are  $(1+\varepsilon)$-Lipschitz on $B_1(p_i)\subset X_i$ and $B_1(p)\subset X$, respectively. We define in $W$ the measure given by
\[
\meas_W= \haus^N \rstr ( \varphi(B_8(p)) \cup \bigcup_{i \in \mathbb N} \varphi_i(B_8(p_i))).
\]
By the construction of the $\delta(N, \varepsilon)$-splitting maps $u_i$ and $u$, one can extend $\varphi_i \circ u_i$ and $\varphi \circ u$ to maps 
\[
\hat u_i, \hat u: W \to \setR^N 
\]
such that each component is still $(1+\varepsilon)$-Lipschitz and  
\[
\hat u_i  \to \hat u \qquad \text{ in }L^\infty(B_5(p)).
\]

Now, by \eqref{eq:phiiTitophiT},  there exists 
$r\in (4,5)$ such that 
$T_i \rstr B_{r}(p)  \to T\rstr B_{r}(p)$  in flat sense in  $W$.
Lemma \ref{le:L1DegConv} implies that 
\begin{align*}
\| \deg( T_i  \rstr B_{r}(p), \hat u_i, \cdot)  - & \deg(T  \rstr B_{r}(p), \hat u, \cdot) \|_{L^1(\setR^n)} 
\leq 
 (1+\varepsilon)^N  d^W_F( T_i  \rstr B_{r}(p) , T \rstr B_{r}(p) ) 
 \\ & \quad +  
2 (1 + \varepsilon)^{N-1}\sum_{j=1}^N \|\hat u_i^j - \hat u_i^j\|_\infty  ( \mass(T  \rstr B_{r}(p)) + \mass(\partial (T  \rstr B_{r}(p))) ).
\end{align*}
Hence, we conclude that
\[
\deg( T_i  \rstr B_{r}(p), \hat u_i, \cdot)  \to  \deg(T  \rstr B_{r}(p), \hat u, \cdot)\quad 
\Leb^N-\text{a.e. on }\setR^N .
\]
By \eqref{eq:degTheta2}, we get
\[
\theta_{ \varphi_{i \sharp}T_i}  \to  \theta_{ \varphi_\sharp T} \qquad  \text{ in  $L^1(B_r(p), \haus^N)$ sense}.
\]
Recalling that, by assumption,
$\theta_{T_i}=1$ $\haus^N$-a.e.\ on $B_1(p_i)$, we infer that 
$\theta_T=1$ $\haus^N$-a.e.\ on a neighbourhood of  $p \in \mathcal R$. 
Furthermore, since $||T_i|| = \haus^N$, the volume estimate \eqref{eq:sharpahlfors} implies
$$
\|T_i\|(B_r(p_i))\leq (1+\varepsilon_{r}) \omega_N r^N, \quad \text{for all } r\in (0,1), 
$$
with $\varepsilon_r\to 0$ as $r\to 0$. The lower semi-continuity of the mass measure under flat convergence yields
$$\|T\|(B_r(p))\leq (1+\varepsilon_{r}) \omega_N r^N,$$
and thus
 $$\Theta^N_*( ||T||, p):= \liminf_{r \to 0}\frac{ \norm{T}(B_r(p))}{\omega_N r^N} \leq 1.$$  
 
\medskip 

Since we have shown that  $|\theta_T|=1$ on $\mathcal R$,
proceeding as in \eqref{eq-weightT}, for $p \in \mathcal R$ we get, 
\begin{align*}
\|T\|(B_1(p))  & \geq  (1+\eps)^{-N}   \omega_N (1-2\eps)^N.
\end{align*}
By Theorem \ref{thm:biholder}-v), we get that 
\[
\Theta^N_*( ||T||, p)  \geq 1.
\]
Thus, the regular set $\mathcal R$ of $X$ is contained in $Y=\set(T)$, and  $\Theta^N_*( ||T||,\,\cdot\, ) = 1$ on $\mathcal R$. We  get $||T|| \rstr \mathcal R = \haus^N \rstr \mathcal R$.
Recalling that the singular set of $X$, $\mathcal S= X \setminus \mathcal R$, satisfies $\haus^N(\mathcal S)=0$ and that $\|T\|\ll \haus^N$ (see \eqref{eq-TmeasRep}),  we conclude that $\|T\|=\haus^N$ and $Y=\set (T)= X$, concluding the proof.     
\end{proof}

We finally prove Theorem \ref{thm-main}.

\begin{proof}[Proof of Theorem \ref{thm-main}]
Given that  $||T_i||=  \haus^N$, we can apply Bishop-Gromov's volume comparison result to see that for all 
$r>0$, 
\begin{equation}\label{eq:MassBoundBall}
\sup_{i \in \mathbb N} \Big \{ ||  T_i || (B_r(x_i)) \Big \}  < \infty.
\end{equation}
  Since $\partial T_i=0$, it is trivial that $
\sup_{i \in \mathbb N}  ||\partial T_i|| (B_r(x_i))   < \infty$.
Thus, by Theorem \ref{thm-compactnessLW},  there exist a subsequence of the $T_i$'s (that abusing notation we do not relabel), a pointed complete metric space $(W, \dist_W, w_0)$, a current $\tilde T \in \LWic{W}{N}$ and isometric embeddings 
\[ 
\varphi_i: X_i \to W \qquad \forall \, i \in \mathbb N,
\]  such that $\varphi_{i \sharp} T_i \to \tilde T$  in the local flat sense and $\varphi_i(x_i)  \to w_0$ in $W$.

If $\lim_{i \to \infty}\haus^N(B_1(x_i)) =0$, then $\lim_{i \to \infty}\haus^N(X_i) =0$
and the lower semi-continuity of the mass under flat convergence implies that $\tilde T=0$.
Otherwise, $\lim_{i \to \infty}\haus^N(B_1(x_i))= \haus^N(B_1(x))>0$ 
and the sequence of pointed metric measured spaces $(X_i, \mathsf d_i, \haus^N, x_i)$ converges in pmGH sense to $(X, \dist, \haus^N, x)$  (see \cite{DPG}). By the stability of the $\RCD$ condition under pmGH convergence (see for instance \cite{GMS15}), it follows that $(X, \dist, \haus^N, x)$ is an $\RCD(K,N)$ space. In particular, 
$(X, \dist)$ is proper and by Theorem \ref{thm-compactnessLW}
there exists an isometric embedding 
\[
\psi :  \spt(\tilde T) \cup \{w_0\} \to  X, \qquad \psi(w_0)=x.
\]
Denote by $T$  the current  $\psi_\sharp \tilde T \in  \LWic{X}{N}$.  We will show that $T$ is an orientation for 
$(X,\dist,\haus^N)$.

Since the push-forward and boundary operators commute, we have
$\varphi_{i \sharp} (\partial T_i) =0$.
By the convergence of  $\varphi_{i \sharp} T_i \to \tilde T$  in the local flat sense, we obtain that 
  $0= \partial (\varphi_{i \sharp} T_i) \to \partial \tilde T$   
   in the local flat sense. Hence, $\partial \tilde T=0$, which implies $\partial T=0$.

Now recall that 
by Remark \ref{rmrk-compactnessLW}, 
there exists an increasing sequence of real numbers $(R_r)_{r=0}^\infty$, with $R_0=0$, such that 
\[
\tilde T= \sum_{r=1}^\infty \bar T_r \in \LWic{W}{N},
\]
where each $\bar T_r \in \bI_n(W)$ is supported in the annulus 
\[
\spt(\bar T_r)=\{ w \in W\, | \, R_{r-1} \leq \dist_W(w_0,w) \leq R_r\},
\]
and that for all $r \in \mathbb N$,
\[
d_F^W(\varphi_{i\sharp}(T_i \rstr B_{R_r}(x_i) ), T^r) \to 0 \]
where 
\[
 T^r = \sum_{i=1}^r  \bar T_i  \in \bI_n(W), \qquad \spt(T^r) \subset \bar{B}_{R_r}(w_0).
 \]

From the properties of $\psi$, $ T \rstr 
\psi(\spt(T^r)) \subset \bar{B}_{R_r}(x)$. 
Hence, for each $r \in \mathbb N$, we can apply 
Theorem
\ref{thm-IFsubsetGH}, 
taking $X_i= \bar B_{R_r}(x_i)$  and $T_i=T_i \rstr \bar B_{R_r}(x_i)$
and conclude, as in the proof of Theorem \ref{thm-mainD}, that 
$B_{R_r}(x) \subset \set(T)$ and 
$\norm{T}=\haus^N$ on $ B_{R_r}(x)$.
\end{proof}

\begin{proof}[Proof of Theorem \ref{thm-main2}] The proof of Theorem \ref{thm-main2} is analogous to the one of Theorem \ref{thm-main}, thanks to  \cite[Thm.\,5.11 and Rem.\,5.12]{CarronMondelloTewodrose2023}. We briefly sketch the main adaptations. 

Since each $(M_i, g_i)$ is an orientable smooth $N$-dimensional Riemannian manifold without boundary, we can endow it with an orienting current $T_i \in \LWic{M_i}{N}$ with $\|T_i\|={\rm vol}_{g_i}$ and $\partial T_i=\emptyset$.

By \cite[Thm.\ A]{CarronMondelloTewodrose2023}, for each $(M_i, g_i)$ there exists $h_i\in C^2(M_i)$ such that 
\begin{itemize}
    \item $0\leq h_i\leq C(N)$;
    \item The weighted manifold $(M_i, e^{2h_i} g, e^{2h_i} {\rm vol}_{g_i})$ is an $\RCD(-K/t_0, N')$ space, where $K>0,t_0 >0$ and $N'\geq N$ are uniform along the sequence.
\end{itemize}
Such two assertions combined with the Bishop-Gromov inequality on  $\RCD(-K/t_0, N')$ spaces imply  the mass bound \eqref{eq:MassBoundBall}.

Applying Theorem \ref{thm-compactnessLW},  there exist a (non relabeled) subsequence of the $T_i$'s, a pointed complete metric space $(W, w_0)$, a current $\tilde T \in \LWic{W}{N}$ and isometric embeddings 
\[ 
\varphi_i: X_i \to W \qquad \forall \, i \in \mathbb N,
\]  such that $\varphi_{i \sharp} T_i \to \tilde T$  in  the local flat sense and $\varphi_i(x_i)  \to w_0$ in $W$.  The volume non-collapsing assumption implies that $\lim_{i \to \infty}\haus^N(B_1(x_i))= \haus^N(B_1(x))>0$ 
and that $(M_i,g_i, {\rm vol}_{g_i}, x_i)$ converges in pmGH sense to $(X, \dist, \haus^N, x)$, we refer the reader to the proof of \cite[Prop.\ 5.9]{CarronMondelloTewodrose2023}. By  construction,  $(X, \dist, \haus^N)$ is a strong Kato limit space, in particular it is proper. By Theorem \ref{thm-compactnessLW}
there exists an isometric embedding 
\[
\psi :  \spt(\tilde T) \cup \{w_0\} \to  X, \qquad \psi(w_0)=x.
\]
Denote by $T$  the current  $\psi_\sharp \tilde T \in  \LWic{X}{N}$. Now, repeating verbatim the last part in the proof of Theorem \ref{thm-main}, one can show that $T$ is an orientation for 
$(X,\dist,\haus^N)$. 
\end{proof}

\bibliographystyle{plain}
\bibliography{Biblio}

\end{document}